\theoremstyle{definition}
\colorlet{LightGray}{gray!20}
\numberwithin{equation}{section}
\newtheorem{prop}{Proposition} 
\newtheorem{lem}[prop]{Lemma}
\newtheorem{thm}[prop]{Theorem}
\newtheorem{remark}[prop]{Remark}
\begin{document}

\title{The Burgers-FKPP advection-reaction-diffusion equation with cut-off}
\author[$\dagger$]{Nikola Popovi\'c}
\author[$\ddagger$]{Mariya Ptashnyk}
\author[$\dagger$]{Zak Sattar\thanks{z.sattar@sms.ed.ac.uk}}
\affil[$\dagger$]{School of Mathematics and Maxwell Institute for Mathematical Sciences, University of Edinburgh, James Clerk Maxwell Building, King’s Buildings, Peter Guthrie Tait Road, Edinburgh EH9 3FD, United Kingdom}
\affil[$\ddagger$]{School of Mathematical \& Computer Sciences and Maxwell Institute for Mathematical Sciences, Heriot-Watt University, Edinburgh EH14 4AP, United Kingdom}

\date{\today}

\maketitle

%\newpage

\begin{abstract}
\noindent
We investigate the effect of a Heaviside cut-off on the front propagation dynamics of the so-called Burgers-Fisher- Kolmogoroff-Petrowskii-Piscounov (Burgers-FKPP) advection-reaction-diffusion equation. We prove the existence and uniqueness of a ``critical" travelling front solution in the presence of a cut-off in the reaction kinetics and the advection term, and we derive the leading-order asymptotics for the speed of propagation of the front in dependence on the advection strength and the cut-off parameter. 
Our analysis relies on geometric techniques from dynamical systems theory and specifically, on geometric desingularisation, which is also known as ``blow-up''.

\end{abstract}

%\newpage
%\tableofcontents

\section{Introduction} \label{intro}
Partial differential equations (PDEs) of reaction-diffusion type are frequently derived from discrete many-particle systems in the large-scale limit as the number $N$ of particles becomes infinite. However, discrepancies are observed between the propagation speeds of front solutions that are found numerically in the underlying many-particle systems and the corresponding speeds in the reaction-diffusion equations derived in the limit as $N\to \infty$ \cite{BREUER1994259, complaticegassmodel, PhysRevE.58.107}. To remedy these discrepancies, Brunet and Derrida \cite{PhysRevE.56.2597} introduced a cut-off in the resulting reaction kinetics; for a general reaction-diffusion equation with reaction kinetics $f(u)$, their modification takes the form
\begin{equation}
    \frac{\partial u}{\partial t}=\frac{\partial^2 u}{\partial x^2}+f(u)\psi(u,\varepsilon),
    \label{general}
\end{equation}
where the cut-off function $\psi$ {\it a~priori} only has to satisfy $\psi(u,\varepsilon)\equiv 1$ when $u> \varepsilon$ and $\psi(u,\varepsilon)<1$ for $u<\varepsilon$. \textcolor{black}{Here, $u=u(x,t)$, with $x\in\mathbb{R}$ and $t\geq 0$.} The motivation in \cite{PhysRevE.56.2597} was that, in $N$-particle systems, no reaction can take place if the particle density is below some threshold value $\frac{1}{N}=\varepsilon\ll 1$. Applying the method of matched asymptotics, they showed that for Fisher reaction kinetics $f(u)=u(1-u)$ \cite{Fisher} in \eqref{general} and a Heaviside cut-off $H(u-\varepsilon)$, with $H\equiv 0$ when $u<\varepsilon$, the shift in the propagation speed \textcolor{black}{$c$ of the front connecting the homogeneous rest states $u=1$ and $u=0$ that is }due to a cut-off is, to leading order, given by $\Delta c=2-c=\frac{\pi^2}{(\ln\varepsilon)^2}+\mathcal{O}[(\ln\varepsilon)^{-3}]$. 

The above asymptotics has been derived rigorously by Dumortier {\it et al.} \cite{Dumortier_2007} for a more general class of scalar reaction-diffusion equations and a broad family of cut-off functions. They applied geometric desingularisation, or ``blow-up'' \cite{dumortier1996canard,krupa33extending}, to construct propagating front solutions to Equation~\eqref{general} as heteroclinic connections in the corresponding first-order system of ordinary differential equations (ODEs) after transformation to a co-moving frame. Geometric desingularisation has since been applied successfully in the study of numerous other reaction-diffusion equations with a cut-off \cite{Dumortier_2007, popdeg, Popović_2011, POPOVIC20121976, article2}. A well-developed alternative approach for determining the leading-order shift in the propagation speed due to a cut-off relies on a variational principle \cite{PhysRevE.75.051106, PhysRevE.76.051101, article3, article4}.

\medskip

In this article, we extend the results of \cite{Dumortier_2007,popdeg, Popović_2011, POPOVIC20121976, article2} to a family of advection-reaction-diffusion equations of the form
\begin{equation}
    \begin{aligned}
        \frac{\partial u}{\partial t}+g(u)\frac{\partial u}{\partial x}=\frac{\partial^2 u}{\partial x^2}+f(u),
    \end{aligned}
    \label{general 2}
\end{equation}
with the advection term $g(u)\frac{\partial u}{\partial x}$ describing the directed transport of $u$. \textcolor{black}{As far as we are aware, the impact of a cut-off on front propagation in advection-reaction-diffusion equations of the type in \eqref{general 2} has not
been studied before. Specifically, we consider the following equation,
\begin{equation}
        \frac{\partial u}{\partial t}+ k u\frac{\partial u}{\partial x}=\frac{\partial^2 u}{\partial x^2}+u(1-u),
        \label{Burger-FKPP1}
\end{equation}
which is known as Burgers-Fisher-Kolmogorov-Petrowskii-Piscounov (Burgers-FKPP) equation \cite{Murray2002,ablowitz1987topics,alma998723063502466, 10.1201/9781420034967}; here, $k u$ is the transport velocity, with $k>0$ a real parameter. 
Equation~\eqref{Burger-FKPP1} is a special case of the generalised FKPP equation, see \cite{gfkpp}, which models competing genotypes in a population and which has found applications in a variety of fields that include fluid dynamics, population modelling, and chemical kinetics. It hence serves as a prototypical model that illustrates the interaction between advection, reaction, and diffusion mechanisms in more general advection-reaction-diffusion equations of the type in \eqref{general 2}. In particular, it realises a pushed front propagation regime which is not present in standard reaction-diffusion with Fisher reaction kinetics.}

To study travelling wave solutions to \eqref{Burger-FKPP1}, we introduce the travelling wave variable $\xi=x-ct$, where $c$ denotes the propagation speed. Setting $U(\xi)=u(x,t)$, we obtain the travelling wave equation
\begin{equation}
    -cU'+kUU'=U''+U(1-U),
    \label{secondorder}
\end{equation}
subject to the boundary conditions $U(-\infty)=1$ and $U(\infty)=0$. The corresponding solution defines a front for the Burgers-FKPP equation, \eqref{Burger-FKPP1}, which connects the two rest states $u=1$ and $u=0$.

Defining $V=U'$ in \eqref{secondorder}, we obtain the first-order system
\begin{equation}
    \begin{aligned}
        U'&=V, \\
        V'&=-cV+kUV-U(1-U),
        \label{firstorder}
    \end{aligned}
\end{equation}
which has equilibria at $Q^-:=(1,0)$ and $Q^+:=(0,0)$. Clearly, heteroclinic orbits for \eqref{firstorder} and front solutions to \eqref{Burger-FKPP1} are equivalent.
The following result can be found in \cite{2021ZaMP...72..163M}, where the existence of travelling front solutions to \eqref{Burger-FKPP1} is shown rigorously.
\begin{thm}\cite[Theorem~4.1]{2021ZaMP...72..163M} 
     Equation~\eqref{firstorder} admits a heteroclinic connection between $Q^-$ and $Q^+$ for $c\geq c_{\rm crit}$, where 
    \begin{equation}
        c_{\rm crit}=\begin{cases}
    2 &\text{if}\:\: k\leq 2,\\
    \frac{k}{2}+\frac{2}{k} &\text{if}\:\: k>2 .
    \label{c(0)}
    \end{cases}
    \end{equation}
    Moreover, the corresponding front solution to \eqref{Burger-FKPP1} is pulled when $k\leq 2$ and pushed when $k>2$.
    For $k\geq 2$ and $c_{\rm crit}=\frac{k}{2}+\frac{2}{k}$, the heteroclinic orbit for \eqref{firstorder} is given explicitly by $V(U)=-\frac{k}{2}U(1-U).$
    \label{basic thm}
\end{thm}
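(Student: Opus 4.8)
The plan is to treat \eqref{firstorder} as a planar flow and realise the front as the unstable manifold of the saddle $Q^-$, with the threshold $c_{\rm crit}$ identified through an explicit invariant parabola. I would first linearise at the two equilibria. At $Q^-=(1,0)$ the characteristic polynomial is $\lambda^2-(k-c)\lambda-1=0$, whose roots have opposite sign, so $Q^-$ is a saddle; the branch of its one-dimensional unstable manifold $W^{u}(Q^-)$ that matters leaves into $\{0<U<1,\ V<0\}$ along the eigendirection of slope $\lambda_u=\tfrac12\big((k-c)+\sqrt{(k-c)^2+4}\big)>0$. At $Q^+=(0,0)$ the characteristic polynomial is $\lambda^2+c\lambda+1=0$, so $Q^+$ is a stable node for $c\ge 2$ and a stable focus for $c<2$. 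Since a front (necessarily with $0\le U\le 1$) is monotone with $V=U'<0$, and any orbit limiting on a focus spirals out of $\{U\ge 0\}$, no front exists for $c<2$; moreover any front must coincide with the above branch of $W^{u}(Q^-)$, which on $\{0<U<1,\ V<0\}$ is a graph $V=V(U)$ solving $\tfrac{dV}{dU}=-c+kU-\tfrac{U(1-U)}{V}$.

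For existence when $c\ge c_{\rm crit}$, I would exhibit a positively invariant ``lens'' $R=\{0\le U\le 1,\ -a\,U(1-U)\le V\le 0\}$, choosing $a=\tfrac{k}{2}$ if $k>2$ and $a=1$ if $k\le 2$. On the upper edge $V'=-U(1-U)<0$, so the flow enters $R$; on the lower parabola a direct computation gives
\[
\frac{d}{d\xi}\big(V+a\,U(1-U)\big)=a\,U(1-U)\big[(c-a-a^{-1})+(2a-k)U\big],
\]
which is nonnegative on $[0,1]$ exactly when $c\ge a+a^{-1}-(2a-k)U$ for all $U\in[0,1]$; optimising over admissible $a$ returns precisely $c\ge c_{\rm crit}$ in each regime. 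One then checks $W^{u}(Q^-)$ enters $R$ (equivalently $\lambda_u\le a$, again $c\ge c_{\rm crit}$), so this orbit is trapped in $R$ with $U$ strictly decreasing; by Poincar\'e--Bendixson it converges to the only remaining equilibrium $Q^+$, producing the heteroclinic. When $k>2$ and $c=c_{\rm crit}$ the lower boundary of $R$ is itself an orbit, hence equals the connection, which is exactly $V(U)=-\tfrac{k}{2}U(1-U)$.

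Finally, to obtain the pulled/pushed dichotomy I would prove non-existence of a front for $2\le c<c_{\rm crit}$ when $k>2$; I expect this to be the crux. Running the barrier argument the other way, for $c<c_{\rm crit}$ one has $\frac{d}{d\xi}\big(V+\tfrac{k}{2}U(1-U)\big)=\tfrac{k}{2}U(1-U)(c-c_{\rm crit})<0$ on the parabola $V=-\tfrac{k}{2}U(1-U)$, while $\lambda_u>\tfrac{k}{2}$, so $W^{u}(Q^-)$ starts and remains strictly below that parabola. Hence $-\tfrac{U(1-U)}{V}<\tfrac{2}{k}$ along the orbit, so $\frac{dV}{dU}<-c+kU+\tfrac{2}{k}$ and $\limsup_{U\to 0^+}\frac{dV}{dU}\le -c+\tfrac{2}{k}$; the elementary equivalence $c<c_{\rm crit}\iff -c+\tfrac{2}{k}<\lambda_-:=\tfrac12\big(-c-\sqrt{c^2-4}\big)$ then forces the orbit's slope eventually below $\lambda_-$. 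But an orbit converging to the node $Q^+$ must approach tangent to an eigendirection, i.e.\ with slope tending to $\lambda_+$ or $\lambda_-$, each $\ge\lambda_-$ --- a contradiction, so the orbit instead exits $\{U\ge 0\}$ and no front exists. Collecting the cases, the minimal admissible speed is $2$ when $k\le 2$, equal to the linear spreading speed of the invaded state $u=0$ (a pulled front), and it is $c_{\rm crit}=\tfrac{k}{2}+\tfrac{2}{k}>2$ when $k>2$, exceeding that linear speed, with the selected profile decaying like $e^{-(k/2)\xi}$ as $\xi\to\infty$ at the strong stable rate (a pushed front).
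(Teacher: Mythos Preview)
Your argument is sound. Note, however, that the paper does not give its own proof of this theorem: it is quoted from \cite{2021ZaMP...72..163M} and used as input for the remainder of the analysis. The only fragment the paper reproves is Proposition~\ref{trappingregionprop}, which is exactly your trapping-region construction with $a=1$ for $k<2$ (together with the observation that the parabola is invariant when $k=2$). So on the existence side for $k\le 2$ your approach and the paper's partial argument coincide; your uniform treatment via the parametrised lower barrier $V=-aU(1-U)$, with $a=\tfrac{k}{2}$ for $k>2$, is a clean way to recover the explicit pushed connection and the threshold $c_{\rm crit}=\tfrac{k}{2}+\tfrac{2}{k}$ simultaneously.

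What you supply beyond the paper is the sharp nonexistence statement for $k>2$ and $2\le c<c_{\rm crit}$. Your barrier argument there is correct: since $\lambda_u>\tfrac{k}{2}$ precisely when $c<c_{\rm crit}$, the unstable manifold of $Q^-$ starts strictly below the parabola $V=-\tfrac{k}{2}U(1-U)$, and the computation $\tfrac{d}{d\xi}\big(V+\tfrac{k}{2}U(1-U)\big)=\tfrac{k}{2}U(1-U)(c-c_{\rm crit})<0$ keeps it there. From $V<-\tfrac{k}{2}U(1-U)$ one gets $\tfrac{dV}{dU}<-c+kU+\tfrac{2}{k}$, hence $\limsup_{U\to 0^+}\tfrac{dV}{dU}\le -c+\tfrac{2}{k}$, and the algebraic equivalence $c<c_{\rm crit}\Leftrightarrow -c+\tfrac{2}{k}<\lambda_-$ (valid for $k>2$, $c\ge 2$) rules out convergence to the node along either eigendirection. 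One small imprecision: for $k<2$ with $a=1$, the condition $\lambda_u\le a$ reduces to $c\ge k$, not to $c\ge 2$; but since you already have $c\ge 2\ge k$ from the lower-barrier inequality, the conclusion that $W^u(Q^-)$ enters $R$ whenever $c\ge c_{\rm crit}$ is unaffected.
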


\textcolor{black}{As is the case for standard reaction-diffusion,  Equation~\eqref{general}, advection should give no contribution when $u<\varepsilon\big(=\frac{1}{N}\big)$, where $N$ is the total number of particles in the underlying many-particle system \cite{burgeparticle}. Hence, it seems plausible that a cut-off should multiply both the reaction kinetics $f(u)$ and the advection term $g(u)\frac{\partial u}{\partial x}$ in \eqref{general 2}. 
Our aim in this article is hence to prove an analogue of Theorem~\ref{basic thm} for the Burgers-FKPP equation with a Heaviside cut-off $H(u-\varepsilon)$,
\begin{equation}
        \frac{\partial u}{\partial t}+ku\frac{\partial u}{\partial x} H(u-\varepsilon)=\frac{\partial^2 u}{\partial x^2}+u(1-u)H(u-\varepsilon), 
        \label{PDEcut1}
\end{equation}
where $k>0$ and $\varepsilon>0$ is the cut-off parameter, as before. Our focus on the Burgers-FKPP equation is further motivated by the fact that, even in the simple Burgers-type advection-diffusion equation \cite{Musha_1978}
\begin{equation}
     \frac{\partial u}{\partial t}+ku\frac{\partial u}{\partial x}=\frac{\partial^2 u}{\partial x^2},
     \label{simpleburger}
\end{equation}
a cut-off impacts on front propagation and the corresponding speed: it is well-known that Equation~\eqref{simpleburger} admits a travelling front solution connecting the rest states $u=1$ and $u=0$ which propagates with the unique speed $c=\frac{k}{2}$. One can then show that introduction of a Heaviside cut-off function $H(u-\varepsilon)$ in \eqref{simpleburger}, whence
\begin{equation}
    \frac{\partial u}{\partial t}+ku\frac{\partial u}{\partial x}H(u-\varepsilon)=\frac{\partial^2 u}{\partial x^2},
    \label{simpleburger with cut}
\end{equation}
induces the shift $\Delta c=\frac{k}{2}\varepsilon^2$ in the front propagation speed which can be derived either by matched asymptotics or via an adaptation of the approach developed in this article.}

Our main result can be formulated as follows. 
\begin{thm}\label{Myfirstthm}
    Let $\varepsilon\in[0, \varepsilon_0)$, with $\varepsilon_0>0$ sufficiently small, and let $k>0$. Then, there exists a unique, $k$-dependent propagation speed $c(\varepsilon)$ such that Equation~\eqref{PDEcut1} admits a unique critical front solution connecting the rest states $u=1$ and $u=0$. Moreover, $c(\varepsilon)=c(0)-\Delta c(\varepsilon)$, where $c(0)=\lim_{\varepsilon\to 0^+}c(\varepsilon)=c_{\rm crit}$ is the critical speed in the absence of a cut-off, see Theorem~\ref{basic thm}, with 
    \begin{equation}
        \Delta c(\varepsilon)=\begin{cases}
    \frac{\pi^2}{(\ln{\varepsilon})^2} &\text{if}\:\: k\leq 2,\\
    \frac{2}{k^{1+8/k^2}}\frac{(k^2-4)^{4/k^2}}{\Gamma(1+4/k^2)\Gamma(1-4/k^2)}
    \varepsilon^{1-4/k^2} &\text{if}\:\: k>2,
    \end{cases}
    \label{dc in thm}
    \end{equation}  
    to leading order in $\varepsilon$.
\end{thm}
Here and in the following, $\Gamma(\cdot)$ denotes the standard Gamma function \cite[Section 6.1]{abramowitz+stegun}.

In particular, Theorem~\ref{Myfirstthm} hence implies that the front propagation speed in \eqref{Burger-FKPP1} is reduced by inclusion of a (Heaviside) cut-off.
\begin{remark}
    The above result is similar to that for the Nagumo equation with cut-off obtained in \cite{Popović_2011}, which also realises pulled and pushed front propagation regimes in dependence on a control parameter. Correspondingly, the correction to the front propagation speed found in the pulled regime is again of the order $\mathcal{O}[(\ln{\varepsilon})^{-2}]$, whereas in the pushed regime, it is proportional to a fractional power of $\varepsilon$. 
\end{remark}
\textcolor{black}{
\begin{remark}
While our choice of Heaviside cut-off in Equation~\eqref{PDEcut1} is mostly made for analytical tractability, we indicate in Section~\ref{discussionsection} below how Theorem~\ref{Myfirstthm} can be extended to more general choices of cut-off function $\psi(u,\varepsilon)$.
\end{remark}
}

\textcolor{black}{\begin{remark}
    We note that \eqref{PDEcut1} does not conserve mass due to the reaction kinetics $u(1-u)H(u-\varepsilon)$. However, the advection and diffusion terms can be written in mass conservation form as follows,
    \begin{equation*}
        \frac{\partial u}{\partial t}+\frac{\partial}{\partial x}F_{\varepsilon}(u)=u(1-u)H(u-\varepsilon),
    \end{equation*}
    where 
    \begin{align*}
        F_{\varepsilon}(u)&=k\int_{0}^u\sigma H(\sigma-\varepsilon)d\sigma-\frac{\partial u}{\partial x}
        =\begin{cases}
    \frac{k}{2}(u^2-\varepsilon^2)-\frac{\partial u}{\partial x}&\text{if}\:\: u>\varepsilon,\\
    -\frac{\partial u}{\partial x} &\text{if}\:\: u< \varepsilon.
    \end{cases} 
    \end{align*}
\end{remark}}

The article is organised as follows: in Section~\ref{section 2}, we apply geometric desingularisation (blow-up) to construct a singular heteroclinic orbit $\Gamma$ for Equation~\eqref{PDEcut1}. In Section~\ref{persistencesection}, we show that $\Gamma$ persists for $\varepsilon$ sufficiently small, therefore establishing Theorem~\ref{Myfirstthm}, and we provide numerical verification of our results. Finally, in Section~\ref{discussionsection}, we discuss our findings and outline future related work.

\section{Geometric desingularisation}
\label{section 2}
Introducing the travelling wave variable $\xi=x-ct$ and writing $u(x,t)=U(\xi)$ in \eqref{PDEcut1}, we obtain the system of equations
\begin{equation}
    \begin{aligned}
        U'&=V,\\
        V'&=-cV+kUV H(U-\varepsilon)-U(1-U)H(U-\varepsilon),\\
        \varepsilon'&=0
        \label{main}
    \end{aligned}
\end{equation}
in analogy to \eqref{firstorder}, where the $(U,V)$-subsystem has been extended by the trivial equation for the cut-off parameter $\varepsilon$.
We introduce the following blow-up transformation (geometric desingularisation) at the origin in \eqref{main}, which serves to desingularise the non-smooth transition between the outer and inner regions in $\{U=\varepsilon\}$:
\begin{equation}
    U=\bar{r}\bar{u},\quad V=\bar{r}\bar{v},\quad\text{and}\quad\varepsilon=\bar{r}\bar{\varepsilon}.
    \label{burger-blowup1}
\end{equation}
Here, $(\bar{u}, \bar{v}, \bar{\varepsilon})\in\mathbb{S}_+^2:=\{(\bar{u}, \bar{v}, \bar{\varepsilon})\ |\ \bar{u}^2+\bar{v}^2+\bar{\varepsilon}^2=1\}\cap\{\bar{\varepsilon}\geq 0\}$, with $\bar{r}\in[0, r_0]$ for $r_0>0$ sufficiently small.

As in \cite{Dumortier_2007,krupa33extending,Popović_2011,krupa2001blowup}, we will analyse \eqref{main} in two coordinate charts, $K_1$ and $K_2$, which are obtained by setting $\bar{u}=1$ and $\bar{\varepsilon}=1$ in \eqref{burger-blowup1}, respectively. The rescaling chart $K_2$ will cover the ``inner region" where $U<\varepsilon$, while the phase-directional chart $K_1$ will allow us to describe the dynamics in the ``outer" region, with $U>\varepsilon$. The transition between the two regions, at $\{U=\varepsilon\}$, will be realised in the overlap domain between these coordinate charts. We will construct the singular (in $\varepsilon$) heteroclinic orbit $\Gamma$ for \eqref{main} by combining appropriate portions thereof in the two charts. As will become apparent, the uniqueness of the ``critical" propagating front in Theorem~\ref{Myfirstthm} is a consequence of the fact that a unique choice of $c$ yields a persistent heteroclinic connection between $Q^-=(1,0)$ and $Q^+=(0,0)$. 

\textcolor{black}{
\begin{remark}
For any object $\Box$ in $(U,V,\varepsilon)$-space, we denote the corresponding blown-up object by $\overline{\Box}$. Moreover, in chart $K_i$, with $i=1,2$, that object will be denoted by $\Box_i$.
\end{remark}
}
%\begin{remark}
%    Henceforth, we will denote the critical wave speed $c_{\rm crit}$ given in Theorem~\ref{basic thm} by $c(0)$, which is the value of $c$ obtained by taking $\varepsilon\to 0$.
%\end{remark}

\subsection{Dynamics in chart \texorpdfstring{$K_2$}{} (``Inner region")}
\label{section 2.1}
In this subsection, we construct the portion $\Gamma_{2}$ of the singular heteroclinic orbit $\Gamma$ in chart $K_2$. Setting $\bar{\varepsilon}=1$ in \eqref{burger-blowup1}, we have the transformation 
\begin{equation}
    U=r_2u_2,\quad V=r_2v_2,\quad\text{and}\quad\varepsilon=r_2,
    \label{K_2_blowup}
\end{equation}
which we apply to~\eqref{main} to obtain the system of equations
\begin{equation}
    \begin{aligned}
        u_2'&=v_2, \\
        v_2'&=-cv_2+kr_2u_2v_2H(u_2-1)-u_2(1-r_2u_2)H(u_2-1), \\
        r_2'&=0.
        \label{K_2before}
    \end{aligned}
\end{equation}
We consider \eqref{K_2before} in the inner region where $U<\varepsilon$, which is equivalent to $u_2<1$, by \eqref{K_2_blowup}. Therefore, $H(u_2-1)\equiv 0$, which implies that \eqref{K_2before} reduces to
\begin{equation}
    \begin{aligned}
        u_2'&=v_2,\\
        v_2'&=-cv_2,\\
        r_2'&=0.
        \label{K_2-H}
    \end{aligned}
\end{equation}

We define the line of equilibria $\ell_2^+=\{(0,0,r_2)\ |\ r_2\in[0,r_0]\}$ for \eqref{K_2-H}. Here, we are particularly interested in the point $Q_2^+=(0,0,0)\in\ell_2^+$, which is found by taking the singular limit as $r_2\to 0^+$ on $\ell_2^+$.
(While Equation~\eqref{K_2-H} admits equilibria for any $u_2\in(0,1)$, we only consider $u_2=0$, which corresponds to the point $Q^+$ before blow-up.) The eigenvalues of the linearisation of \eqref{K_2-H} about $Q_2^+$ are $-c$ and $0$ (double), where the second zero eigenvalue is due to the trivial $r_2$-equation. Taking $r_2\to 0^+$ in \eqref{K_2-H}, we find the system
\begin{equation}
    \begin{aligned}
         u_2'&=v_2,\\
        v_2'&=-c(0) v_2,
    \end{aligned}
\end{equation}
where we have used that $c\to c(0) =c_{\rm crit}$ as $r_2(=\varepsilon)\to 0^+$ and $c_{\rm crit}$ is as defined in Theorem~\ref{basic thm}. 

We can solve for the stable manifold $W_2^{\rm s}(Q_2^+)$ of $Q_2^+$ by writing 
\begin{equation*}
    \frac{dv_2}{du_2}=-c(0),
\end{equation*}
which we can integrate under the condition that $v_2(0)=0$; the unique solution is given by 
\begin{equation*}
    \Gamma_2: \:\: v_2(u_2)=-c(0)u_2.
\end{equation*}

Therefore, invoking again Theorem~\ref{basic thm}, we can write
\begin{equation}
    \Gamma_2: \:\:v_2(u_2) =\begin{cases}
    -2u_2 &\text{if}\:\: k\leq 2,\\
    -\left(\frac{k}{2}+\frac{2}{k}\right)u_2 &\text{if}\:\: k>2.
    \end{cases} 
    \label{Gamma2}
\end{equation}
\textcolor{black}{We emphasise that $\Gamma_2$ is linear in $u_2$ for both $k\leq 2$ and $k>2$; while the corresponding slope is $k$-dependent in the latter case, by \eqref{c(0)}, that difference is immaterial for the dynamics.}

\begin{figure}
    \centering
    %\vspace{-60mm}
    \includegraphics[scale = 0.9]{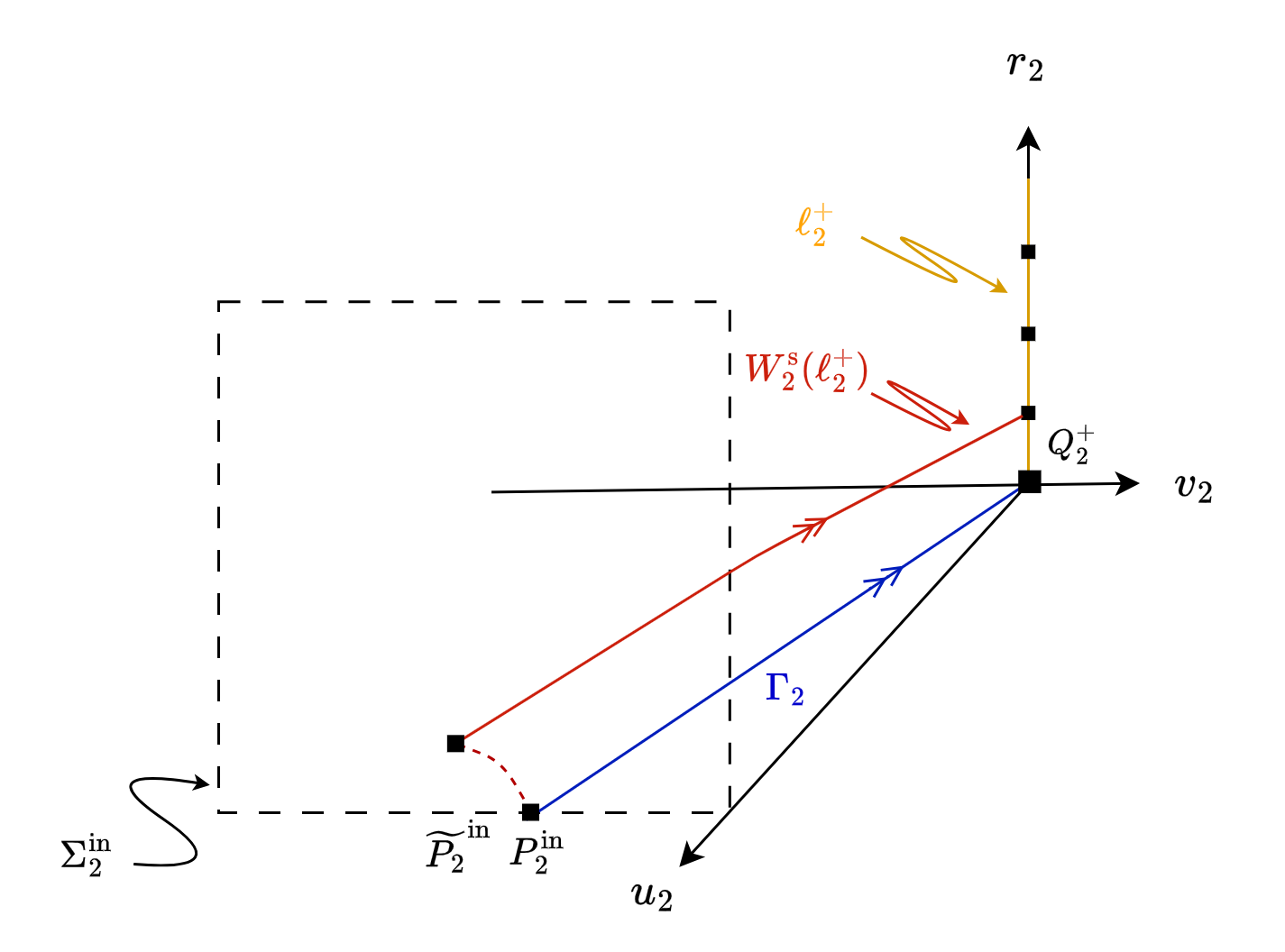}
    %\vspace{-60mm}
    \caption{Geometry and dynamics in chart $K_2$.}
    \label{fig:K_2}
\end{figure}

Next, we introduce the entry section 
\begin{equation}\label{K2 in}
    \Sigma_2^{\rm in}=\{(1, v_2, r_2)\ |\ (v_2, r_2)\in[-v_0, 0]\times[0, r_0] \},
\end{equation}
which is the equivalent of the hyperplane $\{U=\varepsilon\}$ in chart $K_2$, to describe the transition of the orbit $\Gamma_2$ between the inner and outer regions; \textcolor{black}{here, $v_0>0$ is an appropriately chosen constant.} We define the entry point into $K_2$ as $P_2^{\rm in}=\Gamma_2\cap\Sigma_2^{\rm in}=(1,-c(0),0)$, where 
$P_2^{\rm in}=(1, -2, 0)$ for $k\leq 2$ and $P_2^{\rm in}=\big(1, -\big(\frac{k}{2}+\frac{2}{k}\big), 0\big)$ for $k>2$, by \eqref{Gamma2}. The geometry in chart $K_2$ is illustrated in Figure~\ref{fig:K_2}. \textcolor{black}{The singular orbit $\Gamma_2$ (in blue), which corresponds to the stable manifold $W_2^{\rm s}(Q_2^+)$ of the equilibrium at the origin, intersects the entry section $\Sigma_2^{\rm in}$ in $P_2^{\rm in}$. For $r_2\in(0,r_0]$, $\Gamma_2$ will perturb to the stable manifold $W_2^{\rm s}(\ell_2^+)$ (in red) of the line of equilibria $\ell_2^+$ (in yellow).}

\subsection{Dynamics in chart \texorpdfstring{$K_1$}{} (``Outer region")}
\label{section 2.2}
In this subsection, we will analyse the dynamics of Equation~\eqref{main} in the phase-directional chart $K_1$. Our aim is to construct the singular orbit $\Gamma_{1}$, which is the continuous extension of $\Gamma_{2}$, as defined in \eqref{Gamma2}, to $K_1$.

Setting $\bar{u}=1$ in \eqref{burger-blowup1}, we have 
\begin{equation}
    U=r_1,\quad V=r_1v_1,\quad\text{and}\quad\varepsilon=r_1\varepsilon_1,
    \label{K_1transformation}
\end{equation}
which we apply to~\eqref{main} in the outer region where $U>\varepsilon$ to find 
\begin{equation}
    \begin{aligned}
        r_1'&=r_1v_1,\\
        v_1'&=-cv_1+kr_1v_1H(1-\varepsilon_1)-(1-r_1)H(1-\varepsilon_1)-v_1^2,\\
        \varepsilon_1'&=-\varepsilon_1 v_1.
        \label{K_1}
    \end{aligned}
\end{equation}
Here, $H(1-\varepsilon_1)\equiv1$ due to $\varepsilon_1<1$ in chart $K_1$.
The system of equations in \eqref{K_1} has a line of equilibria at $\ell_{1}^-=\{(1,0,\varepsilon_1)\ |\ \varepsilon_1\in[0,\varepsilon_0]\}$ which corresponds to the steady state at $Q^{-}$ before blow-up. As other equilibria of \eqref{K_1} depend on $k$, we will discuss them systematically in Sections~\ref{section:k<2subsubsection} and~\ref{section:k>2subsubsection} below. The point $Q_1^-=(1,0,0)\in\ell_1^-$ is
obtained in the limit as $\varepsilon_1\to 0^+$. 

Since $\varepsilon=r_1\varepsilon_1$, we will have to consider both $r_1\to0$ and $\varepsilon_1\to0$ in the singular limit of $\varepsilon=0$. We will denote the corresponding portions of the singular orbit $\Gamma_1$ in the invariant planes $\{r_1=0\}$ and $\{\varepsilon_1=0\}$ by $\Gamma_{1}^+$ and $\Gamma_{1}^-$, respectively.

We introduce the exit section
\begin{equation}
    \Sigma_{1}^{\rm out}=\{(r_1,v_1, 1)\ |\ (r_1, v_1)\in[0, r_0]\times [-v_0, 0]\}
\end{equation}
to track $\Gamma_{1}^+$ as it leaves chart $K_1$; \textcolor{black}{here, $v_0>0$ is defined as in \eqref{K2 in}.} Clearly, $\Sigma_{1}^{\rm out}$ is equivalent to the entry section $\Sigma_2^{\rm in}$ in chart $K_2$ after transformation to $K_1$: as the change of coordinates $\kappa_{21}:K_2\to K_1$ between the two charts is given by
\begin{equation}
    \kappa_{21}:\ r_1=r_2u_2,\quad v_1=v_2u_2^{-1},\quad\text{and}\quad \varepsilon_1=u_2^{-1},
    \label{coordnatechange}
\end{equation}
we have $\kappa_{21}(\Sigma_2^{\rm in})=\Sigma_1^{\rm out}$. Correspondingly, we can write $P_{1}^{\rm out}=(0,-c(0),1)=\kappa_{21}(P_2^{\rm in})$ for the exit point in $\Sigma_1^{\rm out}$, where $P_{2}^{\rm in}=(1,-c(0), 0)$, as before.

\medskip
\textcolor{black}{In contrast to chart $K_2$, the singular geometry and dynamics in $K_1$ are qualitatively different for $k\leq 2$ and $k>2$, in that the corresponding phase portraits will not be topologically equivalent. Therefore, we consider these regimes in \eqref{main} separately.}

\begin{comment}
\medskip
Due to the significantly different singular geometries in chart $K_1$ in the pulled and pushed front propagation regimes, we consider the cases where $k\leq 2$ and $k>2$ in \eqref{main} separately.
\end{comment}

\subsubsection{Pulled front propagation: \texorpdfstring{$k\leq 2$}{}}
\label{section:k<2subsubsection}
We note that, when $k\leq 2$, the propagation speed $c$ reduces to $c(0)=2=c_{\rm crit}$ when either $r_1\to 0^+$ or $\varepsilon_1\to 0^+$, recall Theorem~\ref{basic thm}. In addition to the line of equilibria $\ell_1^-,$ we have an equilibrium at $P_1=(0,-1,0)$. A simple calculation shows the following result.

\begin{lem}
    The eigenvalues of the linearisation of \eqref{K_1} at $P_1$ are given by $-1$, $0$, and $1$, with corresponding eigenvectors $(1,k-1,0)^T, (0,1,0)^T$, and $(0,0,1)^T$, respectively.
\end{lem}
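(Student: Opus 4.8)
The plan is to linearise the vector field~\eqref{K_1} at the equilibrium $P_1 = (0, -1, 0)$ directly and read off the eigenstructure. First I would substitute $H(1 - \varepsilon_1) \equiv 1$ (valid near $P_1$, since $\varepsilon_1 = 0$ there) so that the relevant right-hand side is
\begin{equation*}
    F(r_1, v_1, \varepsilon_1) = \big(r_1 v_1,\ -c v_1 + k r_1 v_1 - (1 - r_1) - v_1^2,\ -\varepsilon_1 v_1\big),
\end{equation*}
and then compute the Jacobian $DF$ at $(0, -1, 0)$, using $c = c(0) = 2$ in the pulled regime. Differentiating entrywise: $\partial_{r_1}(r_1 v_1) = v_1 = -1$, $\partial_{v_1}(r_1 v_1) = r_1 = 0$, $\partial_{\varepsilon_1}(r_1 v_1) = 0$ in the first row; $\partial_{r_1}(\cdots) = k v_1 + 1 = 1 - k$, $\partial_{v_1}(\cdots) = -c + k r_1 - 2 v_1 = -2 + 0 + 2 = 0$, $\partial_{\varepsilon_1}(\cdots) = 0$ in the second row; and $\partial_{r_1}(-\varepsilon_1 v_1) = 0$, $\partial_{v_1}(-\varepsilon_1 v_1) = -\varepsilon_1 = 0$, $\partial_{\varepsilon_1}(-\varepsilon_1 v_1) = -v_1 = 1$ in the third row. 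Hence
\begin{equation*}
    DF(P_1) = \begin{pmatrix} -1 & 0 & 0 \\ 1-k & 0 & 0 \\ 0 & 0 & 1 \end{pmatrix},
\end{equation*}
which is block triangular, so the eigenvalues are immediately $-1$, $0$, and $1$.

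Next I would identify the eigenvectors. The eigenvalue $1$ is carried by the decoupled $\varepsilon_1$-direction, giving eigenvector $(0,0,1)^T$. The eigenvalue $0$: solving $DF(P_1)\, w = 0$ forces $w_1 = 0$ (from the first row) and $w_3 = 0$ (from the third row), leaving $w_2$ free, so the eigenvector is $(0,1,0)^T$; note the second row equation $(1-k)w_1 = 0$ is then automatically satisfied. The eigenvalue $-1$: solving $(DF(P_1) + I)\, w = 0$ gives $w_3 = 0$ from the third row ($2 w_3 = 0$), $0 \cdot w_1 = 0$ from the first row (no constraint), and from the second row $(1-k) w_1 + w_2 = 0$, i.e. $w_2 = (k-1) w_1$; taking $w_1 = 1$ yields $(1, k-1, 0)^T$. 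This matches the claimed eigenvectors exactly.

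There is essentially no obstacle here — the statement is a ``simple calculation'' as the text itself flags, and the only mild subtlety is justifying that the Heaviside factor may be replaced by $1$ in a neighbourhood of $P_1$ (which holds because $\varepsilon_1 < 1$ throughout chart $K_1$, as already noted after~\eqref{K_1}) and that one must use the pulled-regime value $c = c(0) = 2$ rather than a general $c$; in fact the eigenvalues $-1$ and $1$ are independent of $c$, and only the eigenvector for the eigenvalue $-1$ would change if $c \neq 2$, but since $\partial_{v_1}$ of the second component equals $-c - 2v_1 = -c + 2$, which vanishes precisely at $c = 2$, the block-triangular form and hence the stated eigenvector $(1, k-1, 0)^T$ rely on $c = 2$. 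I would therefore state the computation with $c = 2$ fixed from the outset, present the Jacobian above, and conclude by inspection.
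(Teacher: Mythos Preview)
Your computation is correct and is exactly the ``simple calculation'' the paper alludes to without writing out; the paper provides no proof beyond that phrase, so your direct Jacobian evaluation at $P_1$ with $c=c(0)=2$ is precisely the intended argument.
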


We first outline the construction of $\Gamma_1^+$. Taking $r_1\to 0^+$ in \eqref{K_1}, we obtain
\begin{equation}
    \begin{aligned}
        v_1'&=-2v_1-1-v_1^2,\\
        \varepsilon_1'&=-\varepsilon_1 v_1,
        \label{r_1=0}
    \end{aligned}
\end{equation}
which we can write as
\begin{equation}
    \frac{dv_1}{d\varepsilon_1}=\frac{(v_1+1)^2}{\varepsilon_1 v_1}.
    \label{eps_1 indep}
\end{equation}
To find a solution to Equation~\eqref{eps_1 indep} so that the orbit $\Gamma_1^+$ connects to $\Gamma_{2}$ in the section $\Sigma_1^{\rm out}=\kappa_{21}(\Sigma_2^{\rm in})$, we require $v_1(1)=-c(0)=-2(=v_2(1))$, by \eqref{coordnatechange}. The corresponding (unique) solution is given by 
\begin{equation}
    \Gamma_{1}^+:\ v_1(\varepsilon_1)=-\frac{1+W_0\big(\frac{e}{\varepsilon_1}\big)}{W_0\big(\frac{e}{\varepsilon_1}\big)},
    \label{gamma_1+pull}
\end{equation}
where $W_0$ denotes the Lambert W function \cite{Dence2013}, which is defined as the solution to $W_0(z) e^{W_0(z)}=z$. 
We note that $\Gamma_{1}^+\to P_1=(0,-1,0)$ as $\varepsilon_1\to 0^+$, which completes the construction.

To construct $\Gamma_1^-$, we take $\varepsilon_1\to 0^+$ in \eqref{K_1}, which yields 
\begin{equation}
    \begin{aligned}
        r_1'&=r_1v_1,\\
        v_1'&=-2v_1+kr_1v_1-(1-r_1)-v_1^2.
        \label{epslim}
    \end{aligned}
\end{equation}
Clearly, \eqref{epslim} is equivalent to the unmodified first-order system in \eqref{main} with $c=c(0)$ after blow-down, \textcolor{black}{i.e., after transformation to the original $(U,V,\varepsilon)$-space before the blow-up}: 
\begin{equation}
    \begin{aligned}
        U'&=V,\\
        V'&=-2V+kUV-U(1-U).
        \label{main_2}
    \end{aligned}
\end{equation}
While we cannot explicitly solve \eqref{main_2} for $k<2$, the following two results imply the existence of the orbit $\Gamma_{1}^-$. The first of these is obtained by simple linearisation. 
\begin{lem}
    The origin $Q^+$ in \eqref{main_2} is a degenerate stable node with eigenvalue $-1$ (double) and eigenvector $(-1,1)^T$, while the equilibrium at $Q^-=(1,0)$ is a saddle point with eigenvalues $\frac{k-2\pm\sqrt{k^2-4k+8}}{2}$ and corresponding eigenvectors $\Big(\frac{1}{2}\big(2-k\pm\sqrt{8-4k+k^2}\big), 1\Big)^T.$ 
    \label{Qpm lemma}
\end{lem}

Next, we show that \eqref{main_2} admits a trapping region for $k<2$; the proof is inspired by \cite[Theorem 2.1]{2021ZaMP...72..163M}.
\begin{prop}
    The curves $\{V=0\}$ and $\{V=-U(1-U)\}$ form a trapping region $\mathcal{T}$ for the flow of Equation~\eqref{main_2} when $k<2$. Moreover, the curve $\{V=-U(1-U)\}$ is invariant under the flow of \eqref{main_2} when $k=2$.
    \label{trappingregionprop}
\end{prop}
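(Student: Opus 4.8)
The plan is to realise $\mathcal{T}$ explicitly as the ``lens'' region
\[
\mathcal{T}=\{(U,V)\ :\ 0\le U\le 1,\ U(U-1)\le V\le 0\},
\]
bounded above by the segment $\{V=0,\ 0\le U\le1\}$ and below by the parabolic arc $\{V=U(U-1),\ 0\le U\le1\}$, and to check that along each boundary piece the vector field of \eqref{main_2} points into $\mathcal{T}$ (in forward time), with the two arcs meeting exactly at the equilibria $Q^+=(0,0)$ and $(1,0)$. First I would record the elementary fact that both curves pass through $(0,0)$ and $(1,0)$, so the region is closed, nonempty, and pinched at the two rest states; this is why no trajectory can exit through a ``corner'' except by limiting onto an equilibrium.

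Next I would treat the top edge $\{V=0\}$: there $U'=V=0$ and $V'=-U(1-U)=U(U-1)\le0$, with strict inequality for $U\in(0,1)$, so the flow decreases $V$ and hence enters $\{V<0\}\subset\mathcal{T}$ along the interior of this segment, while $V'=0$ at the endpoints (the equilibria). For the lower arc I would introduce $G(U,V)=V-U(U-1)$, so that $\{G=0\}$ is the arc and $\{G>0\}$ is (locally) the side containing $\mathcal{T}$, and compute
\[
\dot G=(1-2U)V+V'=V\,(kU-2U-1)-U(1-U).
\]
Substituting $V=U(U-1)=-U(1-U)$, valid on $\{G=0\}$, this collapses to
\[
\dot G\big|_{\{G=0\}}=-U(1-U)\big(kU-2U-1+1\big)=U^2(1-U)(2-k).
\]
For $k<2$ and $U\in(0,1)$ this is strictly positive, so $G$ is increasing across the arc and the flow enters $\{G>0\}$, i.e.\ $\mathcal{T}$; at $U=0,1$ one has $\dot G=0$, consistent with these being equilibria. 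Combining the two boundary computations shows $\mathcal{T}$ is positively invariant, which is the trapping-region claim. For the last sentence, the same identity gives $\dot G\big|_{\{G=0\}}=0$ identically when $k=2$, so $\{V=U(U-1)\}$ is then an invariant curve (and in fact agrees with the explicit heteroclinic $V=-\tfrac{k}{2}U(1-U)$ of Theorem~\ref{basic thm} at $k=2$).

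I do not anticipate a serious obstacle: the argument is a short Nagumo-/Lyapunov-type boundary computation. The only point requiring a little care is the behaviour at the two corners $(0,0)$ and $(1,0)$, where the bounding curves are tangent and the vector field vanishes; I would note that the inward inequalities are strict on the open boundary arcs, so a trajectory starting in $\mathcal{T}\setminus\{(0,0),(1,0)\}$ cannot reach the boundary in forward time except by converging to an equilibrium, which suffices for the intended later use (confining the unstable manifold of $(1,0)$ and forcing it into $Q^+$). I would also briefly observe that the sign computations used only $k<2$ (resp.\ $k=2$) and the restriction $U\in[0,1]$, so the statement is sharp in the sense that the lower arc ceases to be a barrier precisely at $k=2$.
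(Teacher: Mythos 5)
Your proposal is correct and follows essentially the same approach as the paper: checking the sign of the normal component of the vector field along each of the two boundary curves (your computation $\dot G\big|_{\{G=0\}}=U^2(1-U)(2-k)$ is exactly the paper's inner product $(-V'(U),1)^T\cdot(U',V')^T=U^2(U-1)(k-2)$ up to sign conventions), and the $k=2$ invariance claim falls out of the same identity. The extra remarks about the corners at the equilibria are a reasonable elaboration but do not change the argument.
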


\begin{proof}
    Substitution of $V=0$ into \eqref{main_2} gives 
    \begin{equation}
        \begin{aligned}
            U'&=0, \\
            V'&=-U(1-U),
        \end{aligned}
    \end{equation}
    which implies $(0,1)\cdot (0, -U(1-U))^T=-U(1-U)<0$ due to $0<U<1.$
    Similarly, substituting $V=V(U)=-U(1-U)$ into \eqref{main_2}, we obtain
     \begin{equation}
        \begin{aligned}
            U'&=-U(1-U), \\
            V'&=U(1-U)(1-kU)
        \end{aligned}
    \end{equation}
    and $(-V'(U), 1)\cdot(U', V')^T=-(k-2)U^2(1-U)>0$ when $k<2$, which also implies that $V(U)=-U(1-U)$ is invariant under the flow of \eqref{main_2} when $k=2$.
\end{proof}
%\vspace{-7mm}
\begin{figure}[H]
    \centering
    \includegraphics[scale = 0.85]{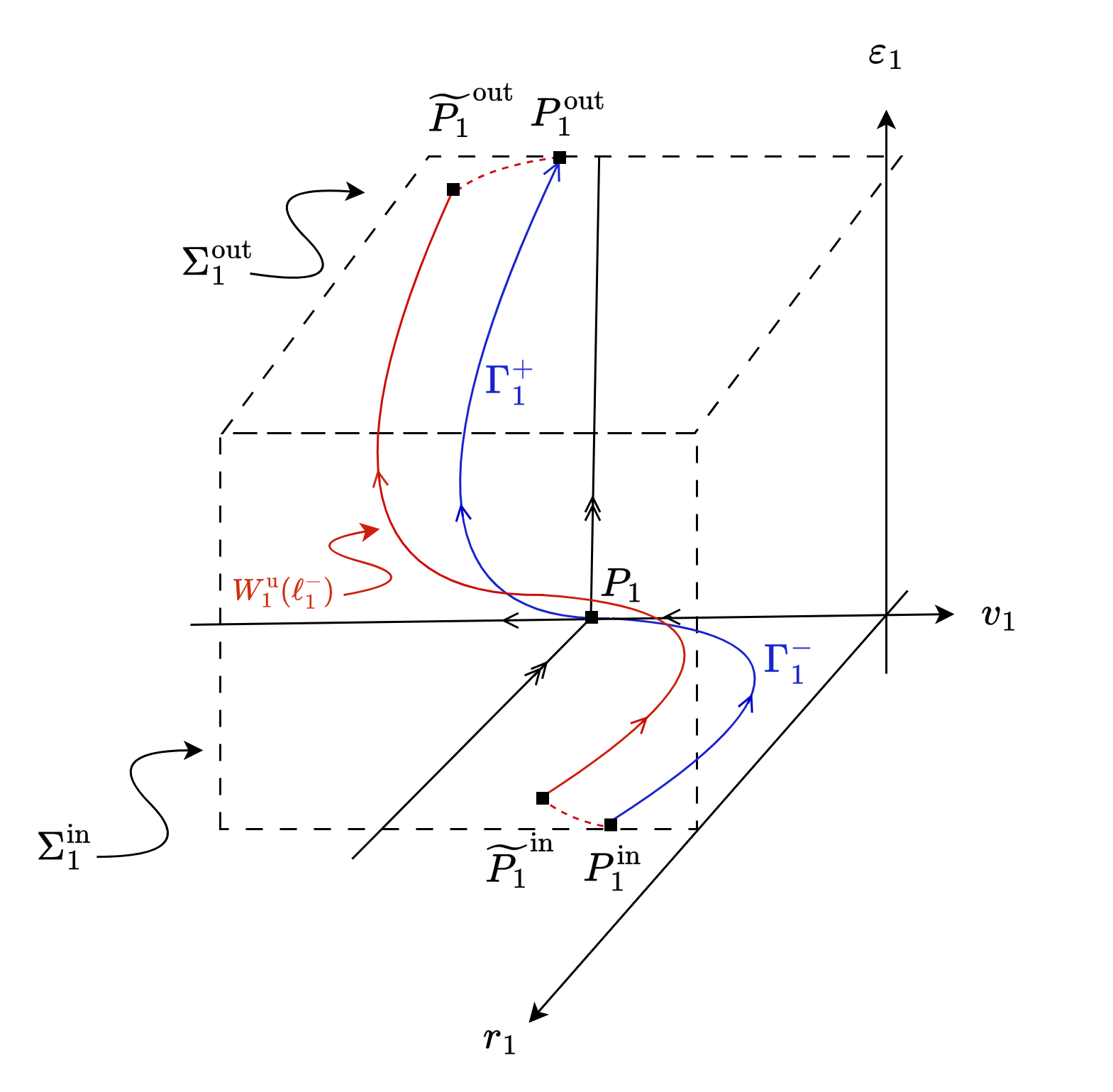}
    %\vspace{-18mm}
    \caption{Geometry and dynamics in chart $K_1$ for $k\leq2$.}
    \label{fig:kleq 2}
\end{figure}

Since the trapping region $\mathcal{T}$ contains only the two equilibria $Q^-$ and $Q^+$, and since the divergence of the vector field in \eqref{main_2} is negative for $U\in[0,1)$ and all $V$, there are no periodic orbits in $\mathcal{T}$. Hence, there must exist a heteroclinic connection between $Q^-$ and $Q^+$. That connection must pass through the negative $V$-plane and is consistent with the stability properties of $Q^\mp$ stated in Lemma ~\ref{Qpm lemma}. It follows that the flow of \eqref{epslim} must enter an equivalent trapping region $\mathcal{T}_1$ in chart $K_1$. The closed region $\mathcal{T}_1$ is bounded by the lines $\{r_1=0\}$, $\{v_1=0\}$, and $\{v_1=r_1-1\}$ in the plane $\{\varepsilon_1=0\}$.  
Therefore, we can conclude that the orbit $\Gamma_{1}^-$ exists and is forward asymptotic to $P_1$. Defining the section 
\begin{equation}
    \Sigma_{1}^{\rm in}=\{(r_0,v_1, \varepsilon_1)\ |\ (v_1, \varepsilon_1)\in[-v_0, 0]\times [0, 1]\},
\end{equation}
\textcolor{black}{with $v_0>0$ as in \eqref{K2 in},} we see that the point of intersection $P_{1}^{\rm in}=\Gamma_{1}^-\cap \Sigma_{1}^{\rm in}$ is given by $P_{1}^{\rm in}=(r_0, v_{1}^{\rm in}, 0)$, where $v_{1}^{\rm in}>-1$, as $v_{1}^{\rm in}\in[r_0-1,0]$ by the proof of Proposition~\ref{trappingregionprop}.

Hence, the construction of $\Gamma_1=\Gamma_1^-\cup P_1\cup\Gamma_1^+$ is complete in the case where $k\leq 2$; see Figure~\ref{fig:kleq 2} for an illustration of the geometry in chart $K_1$ in that case. \textcolor{black}{The portions $\Gamma_1^-$ and $\Gamma_1^+$ (in blue) of $\Gamma_1$ are forward and backward asymptotic, respectively, to the equilibrium at $P_1$ and intersect the sections $\Sigma_1^{\rm in}$ and $\Sigma_1^{\rm out}$ in $P_1^{\rm in}$ and $P_1^{\rm out}$, respectively. For $\varepsilon_1\in(0,\varepsilon_0]$, $\Gamma_1$ will perturb to the unstable manifold $W_1^{\rm u}(\ell_1^-)$ (in red) of the line of equilibria $\ell_1^-$.}

\subsubsection{Pushed front propagation: \texorpdfstring{$k>2$}{}}
\label{section:k>2subsubsection}
We now consider the singular dynamics in chart $K_1$ in the pushed propagation regime where $k>2$. In analogy to the pulled regime, $\ell_1^-$ is still a line of equilibria for \eqref{K_1}. Since, however, $c\to c(0)=\frac{k}{2}+\frac{2}{k}$ as $r_1\to 0^+$, the point $P_1$ is no longer an equilibrium for \eqref{K_1}. Instead, we have two equilibria, 
at $\hat{P}_1=\big(0,-\frac{k}{2}, 0\big)$ and $\check{P}_1=\big(0,-\frac{2}{k}, 0\big)$, which undergo a saddle-node bifurcation as $k\to 2^+$. We are interested in the strong stable eigendirection of the linearisation about the origin in \eqref{firstorder}, i.e., in the absence of a cut-off. Since the heteroclinic orbit $V(U)=-\frac{k}{2}U(1-U)$ is the union of the unstable manifold $W^{\rm u}(Q^-)$ of $Q^-$ and the strong stable manifold $W^{\rm ss}(Q^+)$ of $Q^+$, we restrict our attention to $\hat{P}_1$. The point $\check{P}_1$ corresponds to the weak stable eigendirection at the origin in \eqref{firstorder}, which is not relevant here.

The following lemma summarises the stability properties of $\hat{P}_1$.
\begin{lem}
    The eigenvalues of the linearisation of \eqref{K_1} at $\hat{P}_1=\big(0,-\frac{k}{2}, 0\big)$ are given by $-\frac{k}{2}$, $\frac{k}{2}-\frac{2}{k}$, and $\frac{k}{2}$, with corresponding eigenvectors $\big(\frac{2}{k}, 1, 0\big)^T$, $(0,1,0)^T$, and $(0,0,1)^T$, respectively.
\end{lem}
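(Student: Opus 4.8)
The plan is a direct linearisation, entirely parallel to the preceding lemmas for $P_1$ and for $Q^\mp$. First I would record the Jacobian of the right-hand side of \eqref{K_1}, using that $H(1-\varepsilon_1)\equiv 1$ in $K_1$ and that $c=c(0)=\tfrac{k}{2}+\tfrac{2}{k}$ is the value forced in the limit $r_1\to 0$, $\varepsilon_1\to 0$ when $k>2$ (Theorem~\ref{basic thm}):
\begin{equation*}
J(r_1,v_1,\varepsilon_1)=\begin{pmatrix} v_1 & r_1 & 0\\[2pt] kv_1+1 & -c+kr_1-2v_1 & 0\\[2pt] 0 & -\varepsilon_1 & -v_1\end{pmatrix}.
\end{equation*}
The structural point to note is that the $\varepsilon_1$-column has a single nonzero entry, $-v_1$ in the $(3,3)$ slot, so $J$ is block lower-triangular with respect to the $(r_1,v_1)$ versus $\varepsilon_1$ splitting; this persists at $\hat P_1$ and immediately yields one eigenvalue while reducing the rest to a $2\times2$ problem.

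Next I would substitute $\hat P_1=(0,-\tfrac{k}{2},0)$. The $(3,3)$ entry becomes $-v_1=\tfrac{k}{2}$, giving the eigenvalue $\tfrac{k}{2}$ with eigenvector $(0,0,1)^T$ (the $\varepsilon_1$-axis is invariant, and the coupling $-\varepsilon_1$ into it vanishes at $\hat P_1$). Inserting $c=\tfrac{k}{2}+\tfrac{2}{k}$ and simplifying $-c-2v_1=-\tfrac{k}{2}-\tfrac{2}{k}+k=\tfrac{k}{2}-\tfrac{2}{k}$ and $kv_1+1=1-\tfrac{k^2}{2}$, the remaining $2\times2$ block is
\begin{equation*}
\begin{pmatrix} -\tfrac{k}{2} & 0\\[2pt] 1-\tfrac{k^2}{2} & \tfrac{k}{2}-\tfrac{2}{k}\end{pmatrix},
\end{equation*}
which is already lower triangular, so its eigenvalues are $-\tfrac{k}{2}$ and $\tfrac{k}{2}-\tfrac{2}{k}$. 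For the eigenvectors in the $(r_1,v_1)$-plane: $\tfrac{k}{2}-\tfrac{2}{k}$ has eigenvector $(0,1,0)^T$, while for $-\tfrac{k}{2}$ one solves $(1-\tfrac{k^2}{2})r_1+(k-\tfrac{2}{k})v_1=0$; writing $1-\tfrac{k^2}{2}=-\tfrac{k^2-2}{2}$ and $k-\tfrac{2}{k}=\tfrac{k^2-2}{k}$ and cancelling the common factor $\tfrac{k^2-2}{k}$ gives $v_1=\tfrac{k}{2}r_1$, i.e.\ the eigenvector $(\tfrac{2}{k},1,0)^T$.

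There is essentially no obstacle beyond bookkeeping; the one point needing care is to use the \emph{pushed} critical speed $c(0)=\tfrac{k}{2}+\tfrac{2}{k}$ rather than $c=2$ when evaluating the entries. I would also remark that $k>2$ makes the three eigenvalues distinct and, in particular, $\tfrac{k}{2}-\tfrac{2}{k}>0$, so $\hat P_1$ has a one-dimensional stable and a two-dimensional unstable manifold — precisely what is needed for $\hat P_1$ to play, in chart $K_1$, the role of the strong stable direction of the origin $Q^+$ of the cut-off-free system \eqref{firstorder}, along which the explicit heteroclinic $V(U)=-\tfrac{k}{2}U(1-U)$ enters.
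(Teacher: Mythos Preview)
Your proposal is correct and is exactly the direct linearisation the paper has in mind; the paper states this lemma without proof, treating it as a routine computation analogous to the earlier lemma for $P_1$. Your bookkeeping (in particular the block-triangular structure and the use of $c(0)=\tfrac{k}{2}+\tfrac{2}{k}$) is accurate and your closing remark about the resulting one-dimensional stable / two-dimensional unstable splitting is a helpful addition.
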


We again first construct the portion $\Gamma_1^+$ of $\Gamma_1$. Taking $r_1\to 0^+$ in \eqref{K_1}, we obtain 
\begin{equation}
    \begin{aligned}
        v_1'&=-\Big(\frac{k}{2}+\frac{2}{k}\Big)v_1-1-v_1^2,\\
        \varepsilon_1'&=-\varepsilon_1 v_1,
        \label{r_1lim}
    \end{aligned}
\end{equation}
which we rewrite as 
\begin{equation*}
    \frac{dv_1}{d\varepsilon_1}=\frac{1+\big(\frac{k}{2}+\frac{2}{k}\big)v_1+v_1^2}{\varepsilon_1v_1}.
\end{equation*}
Solving by separation of variables, 
%\begin{equation}
%    \frac{v_1}{\left(\frac{k}{2}+\frac{2}{k}\right)v_1+v_1^2+1}dv_1=\frac{1}{\varepsilon_1} d\varepsilon_1
%\end{equation}
we find
\begin{equation*}
    %\begin{aligned}
        \ln\frac{|k+2v_1|^{k^2}}{|kv_1+2|^4}=(\ln\varepsilon_1+\alpha)(k^2-4),
   % \end{aligned}
\end{equation*}
where $\alpha$ is a constant of integration. Exponentiating both sides in the above equation, we have 
\begin{equation}
    \begin{aligned}
        \frac{(k+2v_1)^{k^2}}{(kv_1+2)^4}&=\alpha'\varepsilon_1^{k^2-4},
        \label{gamma_1+push}
    \end{aligned}
\end{equation}
with $\alpha'=\pm e^{\alpha(k^2-4)}$. 

We choose $\alpha'$ so that the orbit $\Gamma_1^+$ connects to $\Gamma_{2}$ in the section $\Sigma_1^{\rm out}=\kappa_{21}(\Sigma_2^{\rm in})$. Thus, we require $v_1(1)=-c(0)=-\big(\frac{k}{2}+\frac{2}{k}\big)(=v_2(1))$, which is satisfied for $\textcolor{black}{\alpha'=\frac{(-4/k)^{k^2}}{(k^2/2)^4}}$. 

Finally, we note that $\Gamma_{1}^+$ is backward asymptotic to $\hat{P}_1$. Taking $\varepsilon_1\to 0^+$, we conclude that  
\begin{equation}
    \frac{(k+2v_1)^{k^2}}{(kv_1+2)^4}\to0
\end{equation}
must hold, which is only true when $v_1\to-\frac{k}{2}$. Hence, the construction of $\Gamma_1^+$ is complete.

%\vspace{-70mm}
\begin{figure}[H]
    \centering
    \includegraphics[scale = 0.85]{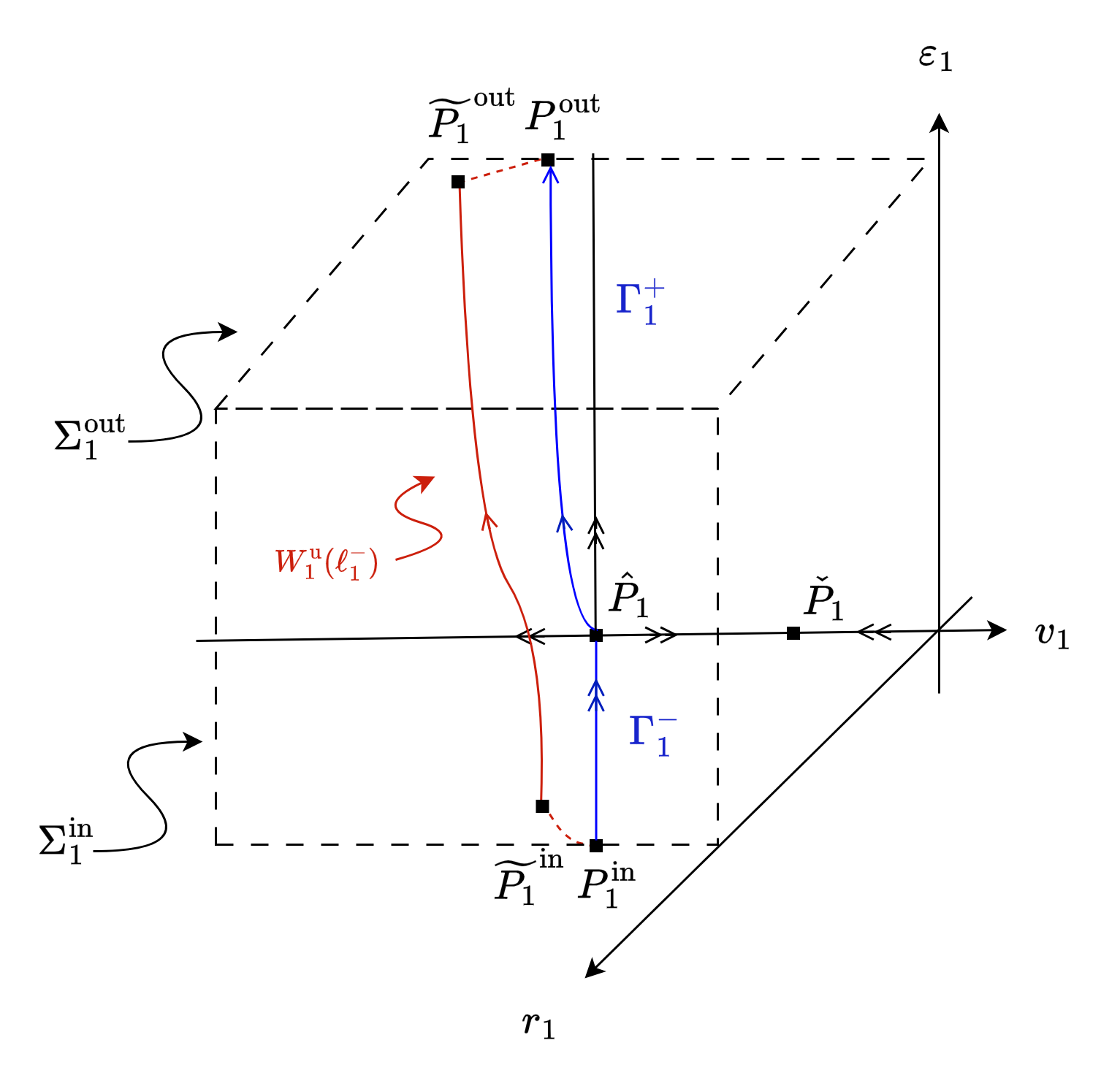}
    %\vspace{-18mm}
    \caption{Geometry and dynamics in chart $K_1$ for $k>2$.}
    \label{fig:K>2}
\end{figure}

Next, we consider the portion $\Gamma_1^-$ of $\Gamma_1$. The limit as $\varepsilon_1\to 0^+$ in \eqref{K_1} gives the system of equations
\begin{equation}
    \begin{aligned}
        r_1'&=r_1v_1,\\
        v_1'&=-\Big(\frac{k}{2}+\frac{2}{k}\Big)v_1+kr_1v_1-(1-r_1)-v_1^2,
        \label{e_1-lim}
    \end{aligned}
\end{equation}
which is equivalent to \eqref{main} after blow-down.
\begin{lem}
    For $r_1\in(0,1]$ the system of equations in \eqref{e_1-lim} admits an explicit orbit that is given by 
    \begin{equation}
        \Gamma_{1}^-: \:\: v_1(r_1)=-\frac{k}{2}(1-r_1),
        \label{gamma_1^-push}
\end{equation}
with $v_1(1)=0$.
\end{lem}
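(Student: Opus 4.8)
The plan is to recognise \eqref{gamma_1^-push} as the blow-up, in chart $K_1$, of the explicit heteroclinic orbit furnished by Theorem~\ref{basic thm}. In the invariant plane $\{\varepsilon_1=0\}$ the cut-off is inactive ($H\equiv 1$ there), so the reduced system \eqref{e_1-lim} is, via the substitution $U=r_1$ and $V=r_1v_1$ from \eqref{K_1transformation}, nothing but \eqref{main} with $\varepsilon=0$ and $c=c(0)=\frac{k}{2}+\frac{2}{k}$; equivalently, it is the first-order travelling wave system in the absence of a cut-off, at the critical (pushed) speed. Since $k>2$, Theorem~\ref{basic thm} then provides the explicit heteroclinic connection $V(U)=-\frac{k}{2}U(1-U)$ between $Q^-$ and $Q^+$.

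First I would pull this orbit back to chart $K_1$: away from $\{r_1=0\}$ the map $U=r_1$, $V=r_1v_1$ is invertible, so that
\begin{equation*}
    v_1(r_1)=\frac{V(U)}{U}\Big|_{U=r_1}=-\frac{k}{2}(1-r_1),
\end{equation*}
which is exactly \eqref{gamma_1^-push}. This is valid for $r_1\in(0,1]$; the value $r_1=0$ must be excluded, as the blow-down degenerates there. In particular $v_1(1)=0$, so $\Gamma_1^-$ terminates on the line of equilibria $\ell_1^-$ at $Q_1^-=(1,0,0)$, whereas $v_1(r_1)\to-\frac{k}{2}$ as $r_1\to 0^+$, so that $\Gamma_1^-$ is forward asymptotic to $\hat{P}_1=\big(0,-\frac{k}{2},0\big)$ --- consistent with $V(U)=-\frac{k}{2}U(1-U)$ being the union of $W^{\rm u}(Q^-)$ and $W^{\rm ss}(Q^+)$, as noted above.

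For completeness I would also verify the claim directly. Along any orbit of \eqref{e_1-lim} with $r_1\neq 0$ one has $\frac{dv_1}{dr_1}=v_1'/r_1'$, while $\frac{d}{dr_1}\big[-\frac{k}{2}(1-r_1)\big]=\frac{k}{2}$; hence it suffices to check that on the candidate curve the identity $v_1'=\frac{k}{2}\,r_1v_1$ holds, i.e.\ that
\begin{equation*}
    \frac{k}{2}\,r_1v_1=-\Big(\frac{k}{2}+\frac{2}{k}\Big)v_1+kr_1v_1-(1-r_1)-v_1^2
\end{equation*}
when $v_1=-\frac{k}{2}(1-r_1)$. Writing $s=1-r_1$, both sides reduce to $-\frac{k^2}{4}s(1-s)$, a one-line polynomial identity. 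This algebraic check is the only computation involved and presents no genuine obstacle; the one subtlety worth flagging is the domain restriction $r_1\in(0,1]$, dictated by the singularity of the blow-down at $r_1=0$.
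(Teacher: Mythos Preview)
Your proposal is correct and follows essentially the same approach as the paper: recall the explicit heteroclinic $V(U)=-\frac{k}{2}U(1-U)$ from Theorem~\ref{basic thm} and transform it to chart $K_1$ via $U=r_1$, $V=r_1v_1$. Your additional direct substitution check and the remarks on asymptotics at $r_1\to0^+$ and $r_1=1$ go slightly beyond what the paper writes out, but the core argument is identical.
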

\begin{proof}
    Recall that when $k\geq2$ and $c(0)=\frac{2}{k}+\frac{k}{2}=c_{\rm crit}$, we have an explicit orbit for \eqref{main} that is given by $V(U)=-\frac{k}{2}U(1-U)$, see Theorem~\ref{basic thm}. Transformation to chart $K_1,$ with $v=r_1v_1$ and $u=r_1$, yields the result.
\end{proof}

We conclude that $\Gamma_{1}^-$ is forward asymptotic to $\hat{P}_1$ and backward asymptotic to $Q_1^-$.
Moreover, $P_1^{\rm in}=\Gamma_{1}^-\cap \Sigma_{1}^{\rm in}=\big(r_0, \frac{k}{2}(r_0-1), 0\big)$, which completes the construction of the singular orbit $\Gamma_1$ in the case where $k>2$. \textcolor{black}{The geometry in chart $K_1$ is illustrated in Figure~\ref{fig:K>2} in that case.}

\subsection{Singular orbit \texorpdfstring{$\overline\Gamma$}{}}

We now combine the results of the previous two subsections to define the singular orbit $\overline\Gamma$ in $(\bar{u},\bar{v},\bar{\varepsilon})$-space.

\begin{prop}
    For any $k>0$, there exists a singular heteroclinic orbit $\overline{\Gamma}$ for Equations~\eqref{K_2before} and \eqref{K_1} that connects $Q_1^-$ to $Q_2^+$. 
    \label{propconst}
\end{prop}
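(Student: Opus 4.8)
The plan is to assemble $\overline{\Gamma}$ by concatenating the orbit segments already produced in Sections~\ref{section 2.1} and~\ref{section 2.2}, gluing them along the transition set $\{U=\varepsilon\}$---i.e.\ along $\kappa_{21}(\Sigma_2^{\rm in})=\Sigma_1^{\rm out}$---and at the corner equilibria on the blow-up sphere, and then blowing down. Since the singular geometry differs in the pulled and pushed regimes, I would treat $k\le2$ and $k>2$ separately, in each case checking three things: (i) the $K_2$-segment $\Gamma_2$ reaches the exit section at the prescribed point; (ii) that point is identified under $\kappa_{21}$ with the starting point of the first $K_1$-segment; and (iii) the two $K_1$-segments $\Gamma_1^{+}$ and $\Gamma_1^{-}$ limit onto one and the same point of the sphere, so that the concatenation is connected. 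All three facts have been verified piecewise in the preceding subsections, so the proof is largely bookkeeping.

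For $k\le2$: in chart $K_2$ the segment $\Gamma_2$ from \eqref{Gamma2} is a portion of the stable manifold $W_2^{\rm s}(Q_2^+)$; along the reduced flow \eqref{K_2-H} at $r_2=0$ it is forward asymptotic to $Q_2^+=(0,0,0)$ and meets $\Sigma_2^{\rm in}$ at $P_2^{\rm in}=(1,-c(0),0)$. By \eqref{coordnatechange} we have $\kappa_{21}(P_2^{\rm in})=P_1^{\rm out}=(0,-c(0),1)$, and the integration constant in \eqref{gamma_1+pull} was fixed precisely so that $\Gamma_1^{+}\subset\{r_1=0\}$ passes through $P_1^{\rm out}$ and is backward asymptotic to $P_1=(0,-1,0)$ as $\varepsilon_1\to0^+$. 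Finally, Lemma~\ref{Qpm lemma} together with the trapping-region argument of Proposition~\ref{trappingregionprop}, transported to $\mathcal{T}_1\subset\{\varepsilon_1=0\}$, yields $\Gamma_1^{-}$, which is backward asymptotic to $Q_1^-$ and forward asymptotic to $P_1$. Hence $\Gamma_1:=\overline{\Gamma_1^{-}}\cup\{P_1\}\cup\overline{\Gamma_1^{+}}$ is a continuous connection from $Q_1^-$ to $P_1^{\rm out}$, and its union with $\Gamma_2$, identified along the overlap via $\kappa_{21}$, blows down to a singular heteroclinic orbit $\overline{\Gamma}$ joining $Q_2^+$ to $Q_1^-$ (cf.\ Figures~\ref{fig:K_2} and~\ref{fig:kleq 2}).

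For $k>2$: the structure is identical, with $P_1$ replaced by $\hat{P}_1=(0,-\tfrac{k}{2},0)$, and in fact cleaner, since $\Gamma_1^{-}$ is now the explicit orbit \eqref{gamma_1^-push}, running from $Q_1^-$ (backward) to $\hat P_1$ (forward), while $\Gamma_1^{+}$ from \eqref{gamma_1+push} is backward asymptotic to $\hat P_1$ and passes through $P_1^{\rm out}$ by the choice of $\alpha'$ (cf.\ Figure~\ref{fig:K>2}). Concatenating $\Gamma_1^{-}$, $\{\hat P_1\}$, $\Gamma_1^{+}$ and $\Gamma_2$ and blowing down again yields $\overline{\Gamma}$.

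The one step that is not a direct computation---and thus the main obstacle---is the $k\le2$ case, where one must argue that the trapped orbit $\Gamma_1^{-}$ actually accumulates on $P_1$ rather than on another boundary point of $\mathcal{T}_1$ or on a periodic orbit; the latter is excluded by the absence of periodic orbits in $\mathcal{T}$ (the divergence of \eqref{main_2} is negative for $U\in[0,1)$), and the former by the node/saddle structure of Lemma~\ref{Qpm lemma}, which forces the unstable manifold of $Q^-$ onto the degenerate node $Q^+$ (equivalently, onto $P_1$ in $K_1$). A secondary bookkeeping point is to make precise that $\overline{\Gamma}$ is a concatenation of genuine orbit segments of the Heaviside-switched singular systems which is continuous through the transition section $\{\varepsilon_1=1\}=\{u_2=1\}$ and through the corner point $P_1$ (resp.\ $\hat P_1$), but only piecewise smooth, and which attains the corner and the endpoints $Q_2^+$, $Q_1^-$ only asymptotically; accordingly $\overline{\Gamma}$ should be defined as the closure of the union of these segments.
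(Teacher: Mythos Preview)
Your proposal is correct and follows essentially the same approach as the paper: split into the cases $k\le2$ and $k>2$, and in each case concatenate $\Gamma_2$, $\Gamma_1^{+}$, and $\Gamma_1^{-}$ by matching at $P_2^{\rm in}\leftrightarrow P_1^{\rm out}$ via $\kappa_{21}$ and at the corner equilibrium $P_1$ (resp.\ $\hat P_1$), citing exactly the same constructions and Proposition~\ref{trappingregionprop}. One small terminological point: since $Q_2^+$ and $Q_1^-$ already live in the blown-up coordinates, no blow-down is needed for the statement as formulated---$\overline{\Gamma}$ is the union $Q_2^+\cup\Gamma_2\cup P_2^{\rm in}\cup\Gamma_1^{+}\cup P_1\cup\Gamma_1^{-}\cup Q_1^-$ (resp.\ with $\hat P_1$) in blown-up space, just as you describe it before invoking blow-down.
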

\begin{proof}
    We first consider the case where $k\leq 2$, i.e., the pulled front propagation regime, which is analogous to the standard FKPP equation with a cut-off \cite{Dumortier_2007}. The orbit $\Gamma_{2}$ given by $v_2(u_2)=-2u_2$, see \eqref{Gamma2},  connects to $Q_2^+$ and intersects $\Sigma_{2}^{\rm in}$ in $P_{2}^{\rm in}=(1,-2,0).$
    Next, we apply the change of coordinates in \eqref{coordnatechange} to find $\kappa_{21}\big(P_{2}^{\rm in}\big)=P_{1}^{\rm out}=(0,-2,1)$. By construction, $\Gamma_{1}^+$ passes through $P_{1}^{\rm out}$ and is backward asymptotic to $P_1$, see \eqref{gamma_1+pull}.
    Similarly, $\Gamma_{1}^-$ is forward asymptotic to $P_1$ and backward asymptotic to $Q_1^-$, by  Proposition~\ref{trappingregionprop}. Therefore, we can now write $\overline{\Gamma}$ as the union of $\Gamma_{1}^-$, $\Gamma_{1}^+$, and $\Gamma_{2}$ with $Q_1^-$, $P_1$, and $Q_2^+$ in blown-up phase space, which proves the result for $k\leq 2$.

    Next, we consider the case where $k>2$, corresponding to the pushed propagation regime. Here, $\Gamma_{2}$ is given by $v_2(u_2)=-\big(\frac{k}{2}+\frac{2}{k}\big)u_2$, see \eqref{Gamma2}, which again connects to $Q_2^+$ and intersects $\Sigma_{2}^{\rm in}$ in $P_{2}^{\rm in}=\big(1,-\big(\frac{k}{2}+\frac{2}{k}\big),0\big)$. Applying the change of coordinates in \eqref{coordnatechange}, we find $\kappa_{21}\big(P_{2}^{\rm in}\big)=P_{1}^{\rm out}=\big(0,-\big(\frac{k}{2}+\frac{2}{k}\big),1\big)$.
    We know that $\Gamma_{1}^+$, constructed in \eqref{gamma_1+push},  passes through $P_{1}^{\rm out}$ and is backward asymptotic to $\hat{P}_1=\big(0, -\frac{k}{2}, 0\big)$.
    Similarly, $\Gamma_{1}^-$, defined in \eqref{gamma_1^-push}, is forward asymptotic to $\hat{P}_1$ and backward asymptotic to $Q_1^-$. Therefore, we can write $\overline{\Gamma}$ as the union of the orbits $\Gamma_{1}^-$, $\Gamma_{1}^+$, and $\Gamma_{2}$ with $Q_1^-$, $\hat{P_1}$, and $Q_2^+$ in blown-up space, which shows the result for $k>2.$
\end{proof}

\section{Proof of Theorem~\ref{Myfirstthm}}
\label{persistencesection}
In this section, we prove our main result, Theorem~\ref{Myfirstthm}. We first show that the singular orbit $\Gamma$, which is obtained from the orbit $\overline{\Gamma}$ constructed in Section~\ref{section 2} after blow-down, persists for $\varepsilon$ sufficiently small in Equation~\eqref{main}. Then, we derive the leading-order asymptotics of the correction $\Delta c(\varepsilon)$ to the critical speed $c(0)=c_{\rm crit}$ that is due to the cut-off. Finally, we illustrate our results numerically.

\subsection{Persistence of \texorpdfstring{$\Gamma$}{}}

The following result implies the existence of a unique front propagation speed $c(\varepsilon)$ for which there exists a critical heteroclinic orbit in~\eqref{main}. While the proof is similar to that of \cite[Proposition 3.1]{Dumortier_2007}, we give it here for completeness.

\begin{prop}
    For $\varepsilon\in(0, \varepsilon_0)$, with $\varepsilon_0$ sufficiently small, $k>0$, and $c$ close to $c(0)$, there exists a critical heteroclinic connection between $Q^-$ and $Q^+$ in Equation~\eqref{main} for a unique speed $c(\varepsilon)$ which depends on $k$. Furthermore, there holds $c(\varepsilon)\leq c(0)$.
    \label{persistenceprop}
\end{prop}
\begin{proof}
    We first analyse \eqref{K_2before} in the inner region, where $U< \varepsilon$. In particular, we are interested in the stable manifold $W_2^{\rm s}(\ell_2^+)$, which is given explicitly by $v_2(u_2)=-c(r_2)u_2$ when $r_2(=\varepsilon)>0$, for general values of $c$. For $r_2$ fixed, $W_2^{\rm s}(\ell_2^+)$ intersects $\Sigma_2^{\rm in}$ in the point $(1, v_2^{\rm in}, r_2)$, where $v_2^{\rm in}=-c(\varepsilon)$. From the definition of the blow-up transformation in \eqref{burger-blowup1}, we have that $V^{\rm in}=v_2^{\rm in}\varepsilon =-c(\varepsilon)\varepsilon\lesssim 0$, which implies $\frac{\partial V^{\rm in}}{\partial c}=-\varepsilon$. 

    We now consider the outer region, where $U>\varepsilon$.
    For general $c$, the dynamics in that region are governed by 
    \begin{equation}
        \begin{aligned}
            U'&=V,\\
            V'&=-cV+kUV-U(1-U),
        \end{aligned}
    \end{equation}
    recall \eqref{firstorder}.
    The intersection of the unstable manifold $W^{\rm u}(Q^-)$ of $Q^-$ with $\{U=\varepsilon\}$ can be written as the graph of an analytic function $V^{\rm out}(c, \varepsilon)$, with $\frac{\partial V^{\rm out}}{\partial c}>0$. A standard phase plane argument shows that $V^{\rm out}(c, \varepsilon)$ must be $\mathcal{O}(1)$ and negative for $c\lesssim c(0)$, which implies $V^{\rm in}>V^{\rm out}.$
    
    Finally, we consider the case where $c=c(0)$ and $\varepsilon>0$.
    First, we take $k<2$, in which case $c(0)=2$. The trapping region argument in Proposition~\ref{trappingregionprop} then shows that $V$ is bounded between the curves $\{V=0\}$ and $\{V=-U(1-U)\}$ and, hence, that $V^{\rm out}\geq -\varepsilon(1-\varepsilon)$ in $\{U=\varepsilon\}$. Therefore, we can conclude that \textcolor{black}{$V^{\rm in}=-2\varepsilon<-\varepsilon(1-\varepsilon)\leq V^{\rm out}$ for any $\varepsilon>0$}. Next, we take $k\geq2$, in which case the singular heteroclinic orbit is known explicitly as $V(U)=-\frac{k}{2}U(1-U)$. Therefore, we can write
    $V^{\rm out}(c(0), \varepsilon)=-\frac{k}{2}\varepsilon(1-\varepsilon)$, which again implies \textcolor{black}{$V^{\rm in}=-\big(\frac{k}{2}+\frac{2}{k}\big)\varepsilon<-\frac{k}{2}\varepsilon(1-\varepsilon)=V^{\rm out}$ for any $\varepsilon>0$}. 

    We conclude by observing that $W^{\rm s}(Q^+)$ and $W^{\rm u}(Q^-)$ must intersect in $\{U=\varepsilon\}$ for a unique value of $c(\varepsilon)\lesssim c(0)$, which follows from the implicit function theorem and the fact that $\frac{\partial V^{\rm out}}{\partial c}-\frac{\partial V^{\rm in}}{\partial c}>0$.    
\end{proof}

\textcolor{black}{
It follows from Proposition~\ref{persistenceprop} that a Heaviside cut-off reduces the critical front propagation speed in Equation~\eqref{Burger-FKPP1}; correspondingly, $\Delta c(\varepsilon)=c(0)-c(\varepsilon)$ must be positive for $\varepsilon$ sufficiently small.
}

\subsection{Leading-order asymptotics of \texorpdfstring{$\Delta c$}{}}

In this subsection, we derive the asymptotics of the correction $\Delta c$ to $c(0)$ to leading order in $\varepsilon$. Again, we distinguish between the pulled and pushed front propagation regimes in \eqref{main}.

\subsubsection{Pulled front propagation: \texorpdfstring{$k\leq 2$}{}}
\label{dc pulled}
We  first consider the case where $k\leq2$. Recall that the dynamics in chart $K_1$ are governed by the system of equations in~\eqref{K_1}. Our aim is to approximate the transition map $\Pi_1:\Sigma_{1}^{\rm in}\to\Sigma_{1}^{\rm out}$ under the flow of \eqref{K_1} for $\varepsilon\in(0,\varepsilon_0)$, with $\varepsilon_0>0$ sufficiently small. 

To that end, we first shift the equilibrium at $P_1=\left(0, -1, 0\right)$ to the origin via the transformation $V_1=v_1+1$, and we set $c=c(0)-\Delta c=2-\eta^2$. With these transformations, we can write \eqref{K_1} as
\begin{equation}
    \begin{aligned}
        r_1'&=-r_1(1-V_1),\\
        V_1'&=(2-\eta^2)(1-V_1)-kr_1(1-V_1)-1+r_1-(1-V_1)^2,\\
        \varepsilon_1'&=\varepsilon_1(1-V_1).
        \label{shifted}
    \end{aligned}
\end{equation}
Rescaling ``time" by dividing out a positive factor of $1-V_1$ from the right-hand sides in \eqref{shifted}, noting that the $\varepsilon_1$-equation decouples, and appending the trivial equation for $\eta$, we obtain
\begin{equation}
    \begin{aligned}
        \dot{r_1}&=-r_1,\\
        \dot{V}_1&=-\eta^2+\frac{(1-k)r_1+kr_1 V_1-V_1^2}{1-V_1}, \\
        \dot{\eta}&=0,
        \label{rescale}
    \end{aligned}
\end{equation}
where the overdot denotes differentiation with respect to the new independent variable \textcolor{black}{$\zeta$}.
\textcolor{black}{
\begin{remark}
The rescaling of ``time" in \eqref{shifted} is implicitly defined via $(1-V_1(\xi))\frac{d}{d\zeta}=\frac{d}{d\xi}$, and merely affects the parametrisation of solutions while leaving the phase portrait unchanged.
\end{remark}
}

We have the following result, which can be shown in close analogy to \cite[Proposition 3.2]{Dumortier_2007}.
\begin{lem}
There exists a normal form transformation $(r_1, V_1, \eta)\to(S(r_1, V_1, \eta), W(r_1, V_1, \eta), \eta)$ that transforms E\-qua\-tion \eqref{rescale} to 
\begin{equation}
    \begin{aligned}
        \dot{S}&=-S,\\
        \dot{W}&=-\eta^2-\frac{W^2}{1-W}, \\
        \dot{\eta}&=0.
        \label{basicnormal}
    \end{aligned}
\end{equation}
That transformation respects the invariance of $\{r_1=0\}$ and $\{\eta=\eta_0\}$, for any $\eta_0\in\mathbb{R}$.
\label{normal_form}
\end{lem}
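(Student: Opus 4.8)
The plan is to keep the contracting and trivial directions as they are — setting $S = r_1$, which already satisfies $\dot S = -S$ in \eqref{rescale}, and leaving $\eta$ unchanged — and to look for the second new coordinate as a near-identity, analytic transformation
\[
W = V_1 + \sum_{j\ge 1} r_1^{\,j}\, w_j(V_1,\eta),
\]
with the $w_j$ analytic on a fixed neighbourhood of $(V_1,\eta) = (0,0)$. Because this map is the identity on $\{r_1 = 0\}$ and does not move $\eta$, it respects the invariance of $\{r_1 = 0\}$ and of the planes $\{\eta = \eta_0\}$ by construction, and the first and third equations of \eqref{basicnormal} are then immediate. The key observation is that on $\{r_1 = 0\}$ the $V_1$-equation in \eqref{rescale} already reads $\dot V_1 = -\eta^2 - V_1^2/(1-V_1)$, which is exactly the target second equation in \eqref{basicnormal}; so the only thing to do is to choose the $w_j$ so as to remove every term of positive order in $r_1$ from the $V_1$-equation.

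Next I would impose $\dot W = -\eta^2 - W^2/(1-W)$. Writing $R(V_1,\eta) = -\eta^2 - V_1^2/(1-V_1)$ and $\rho(V_1) = V_1^2/(1-V_1)$, the chain rule together with a comparison of coefficients of $r_1^{\,j}$ yields, at each order $j\ge 1$, a scalar linear ODE in $V_1$ (with $\eta$ a parameter) of the form
\[
R(V_1,\eta)\,\partial_{V_1}w_j + \bigl(\rho'(V_1) - j\bigr)\,w_j = \mathcal{R}_j(V_1,\eta),
\]
where $\mathcal{R}_j$ is a known analytic function assembled from $w_1,\dots,w_{j-1}$ and, at $j=1$, from the advective coefficient $(1-k+kV_1)/(1-V_1)$. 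The crucial feature is that $\rho'(0) = 0$, so the coefficient of $w_j$ equals $-j \neq 0$ at the origin; this is precisely the absence of resonances, since the only nonzero eigenvalue in play is the $-1$ of $\dot r_1 = -r_1$, the transverse linear part is degenerate, and we only ever eliminate terms of $r_1$-degree $j\ge 1$, so the resonance denominators are simply the integers $j$. Since in addition $R$ vanishes at the origin, the coefficient recursion for the Taylor series of $w_j$ closes, and each homological equation has a unique analytic solution $w_j$ near $(V_1,\eta) = (0,0)$; this determines the transformation as a formal power series in $r_1$.

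The remaining step — and the one I expect to be the main obstacle — is to prove that this formal series converges on a common neighbourhood of the origin, uniformly as $\eta \to 0$, where the transverse direction of \eqref{rescale} is genuinely non-hyperbolic (so that the homological relations are honest ODEs in $V_1$ rather than algebraic identities, their coefficient $R$ degenerating at the origin). Here there are no small divisors, the denominators $j$ being unbounded, so convergence follows from a standard majorant / Cauchy-estimate argument, carried out exactly as in the proof of \cite[Proposition~3.2]{Dumortier_2007}; the only genuinely new ingredient is the $k$-dependent advective term, which enters only through the inhomogeneity $\mathcal{R}_1$ of the order-one equation and does not alter the index structure. Once convergence is secured, $(r_1, V_1, \eta) \mapsto \bigl(r_1,\ V_1 + \sum_{j\ge1} r_1^{\,j} w_j(V_1,\eta),\ \eta\bigr)$ is the desired analytic change of coordinates taking \eqref{rescale} to \eqref{basicnormal}, which proves the lemma.
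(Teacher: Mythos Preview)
Your proposal is correct and is, in spirit, an explicit unpacking of the paper's proof. The paper's own argument is a single citation: it invokes a general normal-form theorem (together with the observation, made in the remark immediately following, that the only resonant monomials in \eqref{rescale} are the pure powers $V_1^n$, $n\ge 2$, so that every $r_1$-dependent term is non-resonant and can be removed). Your approach instead carries out the construction by hand: you fix $S=r_1$, seek $W=V_1+\sum_{j\ge 1}r_1^{\,j}w_j(V_1,\eta)$, derive the homological equations, verify that the relevant denominators are the nonzero integers $j$ (which is exactly the non-resonance statement above, phrased differently), and then defer convergence to the majorant argument of \cite[Proposition~3.2]{Dumortier_2007}---the same reference the paper flags as the analogue of this lemma. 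So the two proofs coincide in substance; yours is more elementary and self-contained, at the cost of writing out the homological recursion explicitly, while the paper simply appeals to an off-the-shelf result. One minor point: the paper's remark speaks of \emph{smooth} near-identity transformations, whereas you aim for analyticity; since the lemma as stated does not specify regularity, and since the subsequent analysis only needs the transformation near the origin, either suffices.
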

\begin{proof}
    The statement follows from \cite[Theorem~1]{0df3afe5-6a08-31cf-8efe-d9042a9b599b}.
\end{proof}

We note that the only resonant terms in \eqref{rescale} are of the form $V_1^n$, for $n\geq2$. Therefore, all other terms can be removed via a sequence of smooth near-identity transformations.

The normal form in \eqref{basicnormal} is identical to the one stated in \cite[Equation~(34)]{Dumortier_2007}. Moreover, the analysis in \cite{Dumortier_2007} shows that the correction $\Delta c$ to $c(0)$ is given by $\eta^2=\frac{\pi^2}{(\ln\varepsilon)^2}+\mathcal{O}[(\ln\varepsilon)^{-3}]$ to leading order, as well as that it is independent of the transformed coordinates $W^{\rm in}$ and $W^{\rm out}$ of the entry and exit points $P_1^{\rm in}$ and $P_1^{\rm out}$, respectively, following the normal form transformation in Lemma~\ref{normal_form}. We note that both $W^{\rm in}$ and $W^{\rm out}$ are well-defined by Proposition~\ref{trappingregionprop} and our analysis in chart $K_2$, in which the point $P_2^{\rm in}$ and, therefore, also the point $P_1^{\rm out}$, is known explicitly.

In summary, we find the same correction $\Delta c$ as in \cite{Dumortier_2007} for the pulled propagation regime, i.e., when $k\leq 2$, which completes the proof of Theorem~\ref{Myfirstthm} in that case.
\begin{remark}
Setting $k=0$ in Theorem~\ref{Myfirstthm}, we recover the main result from \cite[Theorem~1.1]{Dumortier_2007}, as is to be expected.
\end{remark}

\subsubsection{Pushed front propagation: \texorpdfstring{$k>2$}{}}

The pushed propagation regime where $k>2$ is significantly more involved algebraically than the pulled regime discussed in the previous subsection.

Our aim is again to approximate the transition map $\Pi_1:\Sigma_{1}^{\rm in}\to\Sigma_{1}^{\rm out}$ under the flow of \eqref{K_1}. Now, the point $\hat{P}_1=\big(0, -\frac{k}{2}, 0\big)$ is shifted to the origin via the transformation $V_1=v_1+\frac{k}{2}$; moreover, we write $c=c(0)-\Delta c=\frac{k}{2}+\frac{2}{k}-\Delta c$.
The resulting system of equations is given by 
\begin{equation}
    \begin{aligned}
        r_1'&=-r_1\Big(\frac{k}{2}-V_1\Big),\\
        V_1'&=-\Delta c\Big(\frac{k}{2}-V_1\Big)+r_1\Big(1-\frac{k^2}{2}+kV_1\Big)+\Big(\frac{k}{2}-\frac{2}{k}\Big)V_1-V_1^2,\\
        \varepsilon_1'&=\varepsilon_1\Big(\frac{k}{2}-V_1\Big).
    \end{aligned}
\end{equation}
Next, we rescale ``time" by a (positive) factor of $\frac{k}{2}-V_1$, \textcolor{black}{with $\big(\frac{k}{2}-V_1(\xi)\big)\frac{d}{d\zeta}=\frac{d}{d\xi}$,} which yields
\begin{equation}
    \begin{aligned}
        \dot{r_1}&=-r_1,\\
        \dot{V_1}&=-\Delta c+\frac{r_1\big(1-\frac{k^2}{2}+kV_1\big)+\big(\frac{k}{2}-\frac{2}{k}\big)V_1-V_1^2}{\frac{k}{2}-V_1},\\
        \dot{\varepsilon_1}&=\varepsilon_1.
        \label{K_1_timerescale}
    \end{aligned}
\end{equation}
We note that the equation for $\varepsilon_1$ in \eqref{K_1_timerescale} again decouples.
Finally, we separate the $r_1$-dependent terms in the $V_1$-equation in \eqref{K_1_timerescale}, and we append the trivial equation for $\Delta c$:
\begin{equation}
    \begin{aligned}
        \dot{r_1}&=-r_1,\\
        \dot{V_1}&=-\Delta c+r_1 \frac{1-\frac{k^2}{2}+kV_1}{\frac{k}{2}-V_1}+\frac{\left(\frac{k}{2}-\frac{2}{k}\right)V_1-V_1^2}{\frac{k}{2}-V_1},\\
        \dot{\Delta c}&=0.
        \label{expanded}
    \end{aligned}
\end{equation}
For the linearisation of \eqref{expanded} at the origin, we obtain the eigenvalues $-1$, $1-\frac{4}{k^2}$, and $0$. It is straightforward to show that the monomial $r_1V_1^j$ in \eqref{expanded} can only be resonant for integer-valued $j=\frac{2-4/k^2}{1-4/k^2}$. In particular, the lowest-order resonance is realised at order $4$, since $1(-1)+3\big(1-\frac{4}{k^2}\big)=1-\frac{4}{k^2}$ when $k=2\sqrt{2}$, corresponding to the fourth-order monomial $r_1 V_1^3$. To approximate $\Pi_1$, we hence first derive a normal form for \eqref{expanded} by eliminating all non-resonant $r_1$-dependent terms via a sequence of near-identity transformations.

\begin{lem}
    There exists a sequence of smooth transformations that transforms Equation~\eqref{expanded} to 
    \begin{equation}
        \begin{aligned}
            \dot{r_1}&=-r_1,\\
            \dot{W}&=-\Delta c+\frac{\big(\frac{k}{2}-\frac{2}{k}\big)W-W^2}{\frac{k}{2}-W}+\mathcal{O}(r_1 W^j),\\
            \dot{\Delta c}&=0,
            \label{k>2 norm}
        \end{aligned}
    \end{equation}
    with $j\geq 3$. Specifically, that sequence is composed of the transformation $V_1=\frac{k}{2}r_1+Z$ in \eqref{expanded}, followed by the near-identity transformation $Z=W+\frac{4}{k^2}r_1W$ and, finally, a sequence of smooth near-identity transformations. 
    \label{normal_form_1}
\end{lem}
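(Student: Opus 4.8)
The plan is to eliminate the $r_1$-dependent terms in the $V_1$-equation of~\eqref{expanded} order by order in powers of $r_1$, exploiting the fact that the linear part of the system at the origin is diagonal with eigenvalues $-1$ (for $r_1$), $1-\tfrac{4}{k^2}$ (for $V_1$), and $0$ (for $\Delta c$), so that the only obstructions to linearisation are the resonant monomials $r_1^{a}V_1^{b}(\Delta c)^{d}$ satisfying $-a+b\,(1-\tfrac{4}{k^2})=1-\tfrac{4}{k^2}$; since $\Delta c$ is a small parameter we track it separately and only the $r_1$-free monomials $V_1^{b}$ and the single family $a=1$, $b=j$ with $j=\tfrac{2-4/k^2}{1-4/k^2}$ can be resonant, and the latter is integer-valued only for isolated $k$, with the lowest admissible resonance at $j=3$ (when $k=2\sqrt 2$). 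First I would substitute $V_1=\tfrac{k}{2}r_1+Z$ into~\eqref{expanded}; the purpose of this affine-in-$r_1$ shift is to cancel the leading linear-in-$r_1$ term coming from the $r_1\big(1-\tfrac{k^2}{2}+kV_1\big)/(\tfrac{k}{2}-V_1)$ contribution, after which the $Z$-equation contains $r_1$-dependent terms only at order $r_1$ multiplied by $Z$ and higher, i.e.\ of the form $r_1 Z\cdot(\text{analytic})$ plus $\mathcal O(r_1^2)$. Next I would apply the near-identity transformation $Z=W+\tfrac{4}{k^2}r_1 W$, chosen precisely so that its contribution to $\dot W$ cancels the order-$r_1W$ term: differentiating, $\dot Z=\dot W+\tfrac{4}{k^2}\dot r_1 W+\tfrac{4}{k^2}r_1\dot W=\dot W(1+\tfrac{4}{k^2}r_1)-\tfrac{4}{k^2}r_1 W$, and since the offending term in $\dot Z$ is $(1-\tfrac{4}{k^2})$ times $\tfrac{4}{k^2}r_1 W$ divided by the linear part mismatch—equivalently the homological equation $(-1)\cdot\partial_{r_1}h\cdot r_1+(1-\tfrac4{k^2})h=$ (resonant defect) has the unique solution $h=\tfrac{4}{k^2}r_1W$ because $-1+(1-\tfrac4{k^2})\neq 1-\tfrac4{k^2}$—this step removes the $r_1W$ term and pushes the remaining $r_1$-dependence to $\mathcal O(r_1 W^2)$ and $\mathcal O(r_1^2)$.

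After these two explicit changes of variable, I would invoke the general normal-form theorem (as in \cite[Theorem~1]{0df3afe5-6a08-31cf-8efe-d9042a9b599b}, already cited in the proof of Lemma~\ref{normal_form}) to remove, by a further sequence of smooth near-identity transformations $W\mapsto W+(\text{polynomial in }r_1,W)$, all remaining non-resonant $r_1$-dependent monomials up to any finite order, the point being that the resonant monomials among the $r_1$-dependent terms are confined to the single family $r_1W^j$ with $j\ge 3$ (and then only for the exceptional values of $k$ at which $\tfrac{2-4/k^2}{1-4/k^2}\in\mathbb Z$); for all other $k$ the $r_1$-dependence can be pushed to arbitrarily high order, and it suffices to retain it as a remainder $\mathcal O(r_1W^j)$ with $j\ge3$. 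Crucially, none of these transformations touches the $r_1$-free part $\big(\tfrac{k}{2}-\tfrac{2}{k}\big)W-W^2$ over $\tfrac{k}{2}-W$ of the $\dot W$-equation, since $V_1^b$ for $b\ge 2$ are themselves resonant and are left invariant, nor the decoupled equations $\dot r_1=-r_1$ and $\dot{\Delta c}=0$; one should also check that each transformation preserves the invariance of $\{r_1=0\}$ and of $\Delta c=\text{const}$, which is automatic because every generator we use is divisible by $r_1$ and independent of $\Delta c$.

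The main obstacle I anticipate is bookkeeping rather than conceptual: verifying that after the affine shift $V_1=\tfrac{k}{2}r_1+Z$ the denominator $\tfrac{k}{2}-V_1=\tfrac{k}{2}(1-r_1)-Z$ stays bounded away from zero on the relevant compact region (so that the right-hand side remains analytic and the expansion in $r_1$ is legitimate), and tracking the precise coefficient in the $Z=W+\tfrac{4}{k^2}r_1W$ step so that the $r_1W$-term genuinely cancels rather than merely being reduced in size. The degenerate cases $k=2\sqrt2$ and the other isolated $k$ with an integer resonance $j\ge3$ require a short separate remark: there the monomial $r_1W^j$ cannot be removed and is exactly the $\mathcal O(r_1W^j)$ term retained in~\eqref{k>2 norm}, so the statement as written already accommodates them. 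Everything else reduces to the standard homological-equation computation, whose solvability is guaranteed by the eigenvalue non-resonance condition recorded above.
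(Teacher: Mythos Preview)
Your proposal is correct and follows the same route as the paper, which simply invokes standard normal form theory after recording that the lowest potentially resonant $r_1$-dependent monomial is $r_1V_1^3$ (realised at $k=2\sqrt{2}$); you have additionally spelled out the two explicit preliminary transformations already named in the lemma statement and the homological-equation reasoning behind them, which the paper omits.

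One small inaccuracy worth fixing: your assertion that the $r_1$-free monomials $V_1^b$ for $b\ge 2$ are ``themselves resonant'' is not correct in the pushed regime, since the $V_1$-eigenvalue is $\lambda=1-4/k^2\neq 0$ for $k>2$ and $b\lambda=\lambda$ forces $b=1$. The genuine reason the $r_1$-free rational term survives unchanged is the one you state in the same sentence, namely that every generator in your sequence of near-identity transformations is divisible by $r_1$, so the restriction to $\{r_1=0\}$ is the identity; that is also what the paper relies on implicitly.
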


\begin{proof}
    The existence of such a transformation follows from standard normal form theory \cite{hirsch2006invariant}, as the lowest-order potentially resonant monomial in \eqref{expanded} is of the form $r_1V_1^3$ for $k=2\sqrt{2}$. All higher-order non-resonant terms can be removed by a sequence of smooth near-identity transformations.
    \begin{comment}
    Taking the derivative of $V=\frac{k}{2}r_1+Z$ gives 
    \begin{equation}
        \begin{aligned}
            \dot{Z}&=\dot{V}-\frac{k}{2}\dot{r_1}\\
            &=\dot{V}+\frac{k}{2}r_1\\
            &=-\Delta c+r_1\left(\frac{2}{k}-k+\frac{k}{2}\right)+\left(\frac{k}{2}r_1+Z\right)\left(1-\frac{4}{k^2}\right)+\frac{4}{k^2}\left(\frac{k}{2}r_1+Z\right)r_1\\
            &-\frac{8}{k^3}\left(\frac{k}{2}r_1+Z\right)^2+\mathcal{O}(r_1Z^2, Z^3)\\
            &=-\Delta c+\left(1-\frac{4}{k^2}\right)Z-\frac{4}{k^2}r_1 Z-\frac{8}{k^3}Z^2+\mathcal{O}(r_1Z^2, Z^3)
        \end{aligned}
    \end{equation}
\end{comment}
\end{proof}

Next, \textcolor{black}{we approximate $\widetilde{P}_{1}^{\rm in}$ and $\widetilde{P}_{1}^{\rm out}$}, which are the entry and exit points in $\Sigma_1^{\rm in}$ and $\Sigma_1^{\rm out}$, respectively, under $\Pi_1$, to a sufficiently high order in $\Delta c$, $\varepsilon$, and $r_0$. We first show the following preparatory result.
\begin{lem}\label{variational_lemma}
For $U$ and $V$ defined as in \eqref{firstorder}, $U\in[0, U_0]$ with $U_0>0$ sufficiently small, and any $k\geq 2$, there holds
    \begin{equation}
            \frac{\partial V}{\partial c}(U,c(0)) =\begin{cases}
          \frac{U(1-U+\ln{U})}{U-1} &\text{if}\:\: k= 2, \\ \frac{k^2}{k^2+4}U^{\frac 4{k^2}}(1-U){_2}F_1\big(1+\frac 4{k^2}, \frac 4{k^2}, 2+\frac 4{k^2}, 1-U\big)
       &\text{if}\:\: k>2,
    \end{cases}    
    \end{equation}
    where ${_2}F_1$ is the hypergeometric function, see, e.g.,~\cite[Section 15]{abramowitz+stegun}.
\end{lem}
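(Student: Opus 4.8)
We want a closed-form expression for the sensitivity $\partial_c V(c(0),U)$ of the unstable manifold $W^{\rm u}(Q^-)$ with respect to the wave speed $c$, evaluated at the critical speed $c=c(0)$ and for small $U$. The natural approach is to differentiate the orbit equation with respect to $c$, obtaining a linear (variational) ODE for $\Phi(U):=\partial_c V(c(0),U)$, and then solve it using the fact that, for $k\ge2$, the critical heteroclinic orbit is known explicitly as $V(U)=-\frac{k}{2}U(1-U)$ by Theorem~\ref{basic thm}.

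\medskip

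\emph{Step 1: Derive the variational equation.} Writing the orbit in $W^{\rm u}(Q^-)$ as a graph $V=V(c,U)$, the travelling-wave system \eqref{firstorder} gives $V\,\dfrac{\partial V}{\partial U}=-cV+kUV-U(1-U)$. Differentiating this identity in $c$ and evaluating along $V=V(c(0),U)=-\frac{k}{2}U(1-U)=:V_*(U)$, one obtains a first-order linear ODE of the form
\begin{equation*}
    V_*(U)\,\Phi'(U)=\Big(-c(0)+kU+V_*'(U)\Big)\Phi(U)-V_*(U),
\end{equation*}
i.e. $\Phi'(U)=a(U)\Phi(U)+b(U)$ with $a(U)=\dfrac{-c(0)+kU+V_*'(U)}{V_*(U)}$ and $b(U)=-1$, after dividing by $V_*(U)\ne0$ for $U\in(0,1)$. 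Here $c(0)=\frac{k}{2}+\frac{2}{k}$ and $V_*'(U)=-\frac{k}{2}(1-2U)$, so $a(U)$ is an explicit rational function of $U$.

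\medskip

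\emph{Step 2: Solve by the integrating factor.} Compute $\int a(U)\,dU$ in closed form; the integrand is rational with simple poles at $U=0$ and $U=1$ (since $V_*(U)=-\frac{k}{2}U(1-U)$), so a partial-fraction decomposition yields $\int a = \alpha\ln U + \beta\ln(1-U)$ for explicit exponents $\alpha,\beta$ depending on $k$ — one expects $\alpha=-\frac{4}{k^2}$ and $\beta$ an integer-like value, matching the eigenvalue structure $-1,\,1-\frac{4}{k^2}$ seen in \eqref{expanded}. The integrating factor is then $\mu(U)=U^{-\alpha}(1-U)^{-\beta}$, and $\Phi(U)=\mu(U)^{-1}\big(C+\int \mu(U)\,b(U)\,dU\big)$. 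The constant $C$ is fixed by the boundary behaviour at the saddle $Q^-=(1,0)$: along $W^{\rm u}(Q^-)$ the orbit leaves tangent to the unstable eigenvector from Lemma~\ref{Qpm lemma} (which for $k\ge2$ is the one with eigenvalue $\frac{k}{2}-\frac{2}{k}$), and this pins down $\Phi$ near $U=1$; alternatively one uses regularity of $\Phi$ at $U=1$ to select the admissible solution. The remaining integral $\int \mu(U)\,dU=\int U^{4/k^2}(1-U)^{-\beta}\,dU$ is an incomplete Beta integral, which is precisely what produces the hypergeometric function $_2F_1\big(1+\frac{4}{k^2},\frac{4}{k^2},2+\frac{4}{k^2},1-U\big)$ in the statement via the standard representation $\int_0^x t^{p-1}(1-t)^{q}\,dt = \frac{x^p}{p}\,{}_2F_1(p,-q,p+1,x)$ (up to the change of variable $U\mapsto 1-U$).

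\medskip

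\emph{Step 3: The $k=2$ case.} When $k=2$ we have $c(0)=2$ and $\alpha=-1$, so the exponent structure degenerates (the two eigenvalues $-1$ and $1-\frac{4}{k^2}$ coincide at $k=2$), and the integrating factor picks up a logarithm: $\int \mu(U)\,dU$ produces a $\ln U$ term rather than a hypergeometric function, giving $\Phi(U)=\dfrac{U(1-U+\ln U)}{U-1}$ after fixing the constant by regularity at $U=1$. This can either be obtained as the confluent limit $k\to2^+$ of the general formula, or computed directly — the direct computation is cleaner and serves as a consistency check.

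\medskip

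\emph{Main obstacle.} The conceptual steps are routine; the real work is bookkeeping. The delicate point is \emph{selecting the correct particular solution}, i.e. fixing the integration constant $C$ so that $\Phi$ represents $\partial_c V$ along the \emph{unstable} manifold of $Q^-$ rather than some other solution of the (non-unique) variational equation. This requires a careful local analysis near the saddle $U=1$: the general solution of the homogeneous variational equation behaves like $(1-U)^{-\beta}$ near $U=1$, and one must argue that the sensitivity of the unstable manifold is the branch that stays bounded (or has the correct algebraic order) as $U\to1^-$, using the fact that $W^{\rm u}(Q^-)$ depends smoothly on $c$ near the hyperbolic saddle. A secondary nuisance is massaging the resulting incomplete-Beta expression into exactly the ${}_2F_1$ normalisation quoted in the lemma, including getting the prefactor $\frac{k^2}{k^2+4}$ right; this is a matter of applying the standard hypergeometric identities from \cite[Section 15]{abramowitz+stegun} carefully.
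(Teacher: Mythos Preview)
Your approach is essentially identical to the paper's: differentiate the orbit equation in $c$ to obtain a linear first-order ODE for $\Phi=\partial_c V(c(0),\cdot)$, solve by integrating factor/variation of constants, fix the constant by requiring regularity (indeed vanishing) at $U=1$, and identify the remaining integral as an incomplete Beta function to be rewritten via $B_x(a,b)=a^{-1}x^a\,{}_2F_1(a,1-b,a+1,x)$. One small slip: in Step~1 the coefficient of $\Phi$ should read $-c(0)+kU-V_*'(U)$ (minus, not plus, $V_*'$), which then collapses to the constant $-\tfrac{2}{k}$ and gives exactly the paper's equation $\Phi'=-1+\tfrac{4}{k^2}\tfrac{1}{U(1-U)}\Phi$ --- your predicted exponent $\alpha=-\tfrac{4}{k^2}$ is already consistent with this corrected sign (and $\beta=\tfrac{4}{k^2}$ as well, not an integer).
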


\begin{proof}
    We rewrite \eqref{firstorder} with $U$ as the independent variable,
    \begin{equation}
        V\frac{dV}{dU}=-cV+kUV-U(1-U).
    \end{equation}
    Differentiation with respect to $c$ gives 
    \begin{equation}
        \frac{\partial V}{\partial c}\frac{\partial V}{\partial U}+V\frac{\partial}{\partial c}\frac{\partial V}{\partial U}=-V-c\frac{\partial V}{\partial c}+kU\frac{\partial V}{\partial c}.
    \end{equation}
    Evaluating at $V(U,c(0))=-\frac{k}{2}U(1-U)$ and making use of $\frac{\partial V}{\partial U}(U,c(0))=-\frac{k}{2}(1-2U)$, we find 
    %\begin{equation}
     %   \begin{aligned}
      %      &\frac{d}{d U}\left(\frac{\partial V}{\partial c}(U,c(0))\right)=
       %     \frac{-\left(\frac{2}{k}+\frac{k}{2}\right)\frac{\partial V}{\partial c}(U,c(0)) +kU \frac{\partial V}{\partial c}(U,c(0))+\frac{k}{2}(1-2U)\frac{\partial V}{\partial c}(U,c(0))+\frac{k}{2}U(1-U)}{-\frac{k}{2}U(1-U)},
        %    \label{varode}
        %\end{aligned}
    %\end{equation}
    %which we can simplify to
    \begin{equation}
        \frac{d}{d U}\left(\frac{\partial V}{\partial c}(U,c(0))\right)=-1+\frac{4}{k^2}\frac{1}{U(1-U)}\frac{\partial V}{\partial c}(U,c(0)).
        \label{varationalode}
    \end{equation}
    We note that \eqref{varationalode} is an ordinary differential equation for $\frac{\partial V}{\partial c}(U,c(0))$ in the variable $U$.
    For $k=2$, the unique solution that remains bounded as $U\to 1^-$ is given by 
    \begin{equation*}
        \frac{\partial V}{\partial c}(U,c(0))=\frac{U(1-U+\ln{U})}{U-1}.
    \end{equation*}
    For $k>2$, we can solve \eqref{varationalode} by variation of constants, which gives 
    \begin{equation*}
            \frac{\partial V}{\partial c}(U,c(0))=\beta (1-U)^{-\frac{4}{k^2}}U^{\frac{4}{k^2}}+(1-U)^{-\frac{4}{k^2}}U^{\frac{4}{k^2}}\Big[-\int_{1}^U(1-s)^{\frac{4}{k^2}}s^{-\frac{4}{k^2}}ds\Big],
    \end{equation*}
    for some constant of integration $\beta$ that is to be determined. We require that $\frac{\partial V}{\partial c}(U,c(0))\to 0$ when $U\to 1^-$. Therefore, $\beta=0$, since the second term goes to zero by L'H\^opital's Rule.
    Next, we make the substitution $s=1-\sigma$, which gives
    \begin{equation}
            \frac{\partial V}{\partial c}(U,c(0))=(1-U)^{-\frac{4}{k^2}}U^{\frac{4}{k^2}}\int_{0}^{1-U} \sigma^{\frac{4}{k^2}}(1-\sigma)^{-\frac{4}{k^2}}d\sigma.
            \label{intbetafunc}
    \end{equation}
The integral in \eqref{intbetafunc} is of the form of an Incomplete Beta function, see \cite[Section 6.6]{abramowitz+stegun}, which is defined by the expression 
\begin{equation*}
        B_x(a,b):=\int_0^{x}\sigma^{a-1}(1-\sigma)^{b-1}d\sigma.
\end{equation*}
Setting $x=1-U$, $a=1+\frac{4}{k^2}$, and $b=1-\frac{4}{k^2}$, we can write $\frac{\partial V}{\partial c}(U,c(0))$ in terms of $B_{1-U}\big(1+\frac{4}{k^2}, 1-\frac{4}{k^2}\big).$
Finally, the relation \cite[Equation~6.6.8 and Section~15]{abramowitz+stegun} 
\[
B_{x}(a,b)=a^{-1}x^a{_2}F_1(a, 1-b, a+1, x)
\]
implies
\begin{equation}
    \begin{aligned}
        \frac{\partial V}{\partial c}(U,c(0))&=\frac{k^2}{k^2+4}U^{4/k^2}(1-U){_2}F_1\big(1+4/k^2, 4/k^2, 2+4/k^2, 1-U\big),
    \end{aligned}
\end{equation}
which completes the proof.    
\end{proof}

%Next, we approximate $P_{1}^{\rm out}$ and $P_{1}^{\rm in}$ to a sufficiently high order in $\varepsilon$ and $\Delta c$. 
\begin{lem}
    For $k>2$ and $\Delta c$ and $\varepsilon$ sufficiently small, the points $\widetilde{P}_1^{\rm in}=\big(r_0, W^{\rm in}, \frac{\varepsilon}{r_0}\big)$ and $\widetilde{P}_1^{\rm out}=(\varepsilon, W^{\rm out}, 1)$ satisfy 
    \begin{equation}
        \begin{aligned}
            W^{\rm in}=\nu(r_0)\Delta c+\mathcal{O}(\Delta c^2, r_0^{4/k^2}\Delta c)\quad\text{and}\quad
            W^{\rm out}=-\frac{2}{k}+\Delta c-\frac{k}{2}\varepsilon+\mathcal{O}(\varepsilon ^2),  
        \end{aligned}
    \end{equation}
    where 
    \begin{equation}\label{def:vr0}
        \nu(r_0)=-\frac{k^2}{k^2+4}r_0^{4/k^2-1}(1-r_0){_2}F_1\big(1+4/k^2, 4/k^2, 2+4/k^2, 1-r_0\big).
    \end{equation}
    \label{in/outlemma}
\end{lem}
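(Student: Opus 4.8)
\section*{Proof proposal}

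The plan is to read off the two points from the two ends of the singular construction of Section~\ref{section 2}. The exit point $P_1^{\rm out}$ is inherited from the completely explicit dynamics in chart $K_2$ via the change of charts $\kappa_{21}$, while the entry point $P_1^{\rm in}$ comes from the unstable manifold $W^{\rm u}(Q^-)$ of the outer-region system, whose dependence on the speed $c$ near $c(0)=\frac k2+\frac 2k$ is exactly what Lemma~\ref{variational_lemma} supplies. In each case I would then push the $v_1$-coordinate through the explicit sequence of changes of coordinates of Lemma~\ref{normal_form_1} --- the shift $V_1=v_1+\frac k2$, the straightening $V_1=\frac k2 r_1+Z$ (which maps $\Gamma_1^-$, see \eqref{gamma_1^-push}, to $\{Z=0\}$), the near-identity map $Z=W+\frac 4{k^2}r_1W$, and the remaining smooth near-identity maps --- and Taylor-expand to the orders stated.

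For $P_1^{\rm out}$: by the analysis of Section~\ref{section 2.1} (cf.\ the proof of Proposition~\ref{persistenceprop}), the stable manifold $W_2^{\rm s}(Q_2^+)$ is the line $v_2=-c\,u_2$, so it meets $\Sigma_2^{\rm in}=\{u_2=1\}$ at $(1,-c,\varepsilon)$; applying $\kappa_{21}$ from \eqref{coordnatechange} and setting $c=c(\varepsilon)=\frac k2+\frac 2k-\Delta c$ gives $P_1^{\rm out}=(\varepsilon,-\frac k2-\frac 2k+\Delta c,1)$. The shift yields $V_1^{\rm out}=-\frac 2k+\Delta c$; the straightening, with $r_1=\varepsilon$ there, yields $Z^{\rm out}=-\frac 2k+\Delta c-\frac k2\varepsilon$; and the map $Z=W+\frac 4{k^2}r_1W$ together with the remaining near-identity maps --- all of which are the identity on $\{r_1=0\}$ and hence deviate from the identity only through terms proportional to $r_1=\varepsilon$ --- contribute further terms of order $\varepsilon$, so that carrying these through gives $W^{\rm out}=-\frac 2k+\Delta c-\frac k2\varepsilon+\mathcal O(\varepsilon^2)$. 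Only the part $-\frac 2k+\Delta c$ will be needed for the leading-order asymptotics of $\Delta c$, so the precise $\varepsilon$-coefficient is inessential and may be obtained by a direct but tedious computation.

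For $P_1^{\rm in}$: for the selected speed $c=c(0)-\Delta c$ the heteroclinic coincides in the outer region with a piece of $W^{\rm u}(Q^-)$, a graph $V=\mathcal V(U;c)$ analytic in $c$ near $c(0)$; evaluating at the fixed value $U=r_0$ gives $v_1^{\rm in}=\mathcal V(r_0;c)/r_0$. At $\Delta c=0$, $\mathcal V(U;c(0))=-\frac k2 U(1-U)$ by Theorem~\ref{basic thm}, so $v_1^{\rm in}=-\frac k2(1-r_0)$, the entry point lies on $\Gamma_1^-$, and hence $W^{\rm in}=0$. The first-order term in $\Delta c$ is then governed by $-\partial_c\mathcal V(r_0;c(0))=-\frac{\partial V}{\partial c}(c(0),r_0)$, which Lemma~\ref{variational_lemma} evaluates in closed form in terms of ${}_2F_1$; passing this through the transformations --- where, along $\Gamma_1^-$, the straightening identifies $Z$ with $\frac{1}{r_0}\big(\mathcal V(r_0;c)+\frac k2 r_0(1-r_0)\big)$ and the near-identity maps fix $\Gamma_1^-=\{W=0\}$ --- assembles the linear coefficient into $\nu(r_0)$ of \eqref{def:vr0}, giving $W^{\rm in}=\nu(r_0)\Delta c+\mathcal O(\Delta c^2)$. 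A minor point is that at $\Sigma_1^{\rm in}$ one has $\varepsilon_1=\varepsilon/r_0=\mathcal O(\varepsilon)$, which (as is confirmed a posteriori, since $\Delta c=\mathcal O(\varepsilon^{1-4/k^2})\gg\varepsilon$) only contributes to the $\mathcal O(\Delta c^2)$ remainder.

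The step I expect to be the main obstacle is the bookkeeping through the normal-form changes of coordinates: although each is a shift or a near-identity map, at $\Sigma_1^{\rm out}$ the relevant $W$-value is $\mathcal O(1)$ and at $\Sigma_1^{\rm in}$ it is $\mathcal O(\Delta c)$ while $r_1=r_0$ is a fixed constant, so ``near-identity'' alone does not immediately control their contributions at the orders being tracked. One must exploit that all these maps are the identity on $\{r_1=0\}$ and preserve the invariant curve $\Gamma_1^-=\{W=0,\ \varepsilon_1=0\}$ when $\Delta c=0$, and that only those up to the resonant order $j\geq 3$ act non-trivially, in order to conclude that their combined effect amounts to the Jacobian-type factors absorbed into $\nu(r_0)$ in $W^{\rm in}$ and to $\varepsilon$- and higher-order corrections in $W^{\rm out}$. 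With Lemma~\ref{in/outlemma} in hand, the transition map $\Pi_1$ in the normal-form coordinates can then be estimated --- the passage near the saddle $\hat P_1$ takes ``time'' $\sim\ln(r_0/\varepsilon)$, producing the expansion factor $(r_0/\varepsilon)^{1-4/k^2}$ --- and matching $\Pi_1(P_1^{\rm in})=P_1^{\rm out}$ delivers the leading-order formula for $\Delta c$ in Theorem~\ref{Myfirstthm}.
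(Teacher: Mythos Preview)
Your proposal is correct and follows essentially the same route as the paper: derive $W^{\rm out}$ from the explicit line $v_2=-c\,u_2$ in $K_2$ via $\kappa_{21}$, derive $W^{\rm in}$ from the Taylor expansion of the analytic graph $V(c,U)$ of $W^{\rm u}(Q^-)$ using Lemma~\ref{variational_lemma} for the linear term, and then pass both through the transformations of Lemma~\ref{normal_form_1}. The paper simply records the combined effect of those transformations as $W=V_1-\tfrac{k}{2}r_1+\mathcal O(r_1V_1)$ and substitutes; your more explicit concern about the near-identity bookkeeping (invariance on $\{r_1=0\}$ and on $\Gamma_1^-$) is the right way to justify that step, which the paper leaves implicit.
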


\begin{proof}
    Recall that, by \eqref{burger-blowup1}, $r_1v_1=r_2v_2$, which implies $v_1^{\rm out}=v_2^{\rm in}=-c(\varepsilon)=-\big(\frac{k}{2}+\frac{2}{k}\big)+\Delta c$.
    As the point $P_1$ was shifted to the origin via the transformation $V_1=v_1+\frac{k}{2},$ we have $V_1^{\rm out}=-\frac{2}{k}+\Delta c$.
    The normal form transformation given by Lemma~\ref{normal_form_1} implies that $W=V_1-\frac{k}{2}r_1+\mathcal{O}(r_1 V_1).$ Moreover, $r_1=\varepsilon$ in $\Sigma_1^{\rm out}$;
    therefore, $W^{\rm out}=-\frac{2}{k}+\Delta c-\frac{k}{2}\varepsilon+\mathcal{O}(\varepsilon^2).$

    We now consider $W^{\rm in}$. As $W^{\rm u}(Q^-)$ is analytic in $U$ and $c$, we can write 
    \begin{equation*}
        \begin{aligned}
            V(U,c)&=\sum_{j=0}^{\infty}\frac{1}{j!}\frac{\partial^j V}{\partial c^j}(U,c(0))(-\Delta c)^j\\
            &=-\frac{k}{2}U(1-U)-\frac{k^2}{k^2+4}U^{4/k^2}(1-U) {_2}F_1\big(1+4/k^2, 4/k^2, 2+4/k^2, 1-U\big)\Delta c+\mathcal{O}(\Delta c^2),
        \end{aligned}
    \end{equation*}
    by Lemma~\ref{variational_lemma}.
    Next, we make use of $U=r_1$, $V=r_1v_1$, and the fact that $r_1=r_0$ in $\Sigma_1^{\rm in}$, as well as of the transformation $V_1=v_1+\frac{k}{2}$, to obtain
 %   \begin{equation}
 %       \begin{aligned}
 %           v_1^{\text{in}}&=-\frac{k}{2}+\frac{k}{2}r_0+\frac{k^2}{k^2+4}r_0^{4/k^2-1}(1-r_0) \\
 %           &\quad\times {_2}F_1\big(1+4/k^2, 4/k^2, 2+4/k^2, 1-r_0\big)\Delta c+\mathcal{O}(\Delta c^2)
 %       \end{aligned}
 %   \end{equation}
 %and, hence,
    \begin{equation*}
    \begin{aligned}
            V_1^{\rm in}&=\frac{k}{2}r_0-\frac{k^2}{k^2+4}r_0^{4/k^2-1}(1-r_0){_2}F_1\big(1+4/k^2, 4/k^2, 2+4/k^2, 1-r_0\big)\Delta c+\mathcal{O}(\Delta c^2).
        \end{aligned}
    \end{equation*}
    Finally, since $W=V_1-\frac{k}{2}r_1+\mathcal{O}(r_1V_1)$, we have 
    \begin{equation}
        \begin{aligned}
            W^{\rm in}&=-\frac{k^2}{k^2+4}r_0^{4/k^2-1}(1-r_0) {_2}F_1\big(1+4/k^2, 4/k^2, 2+4/k^2, 1-r_0\big)\Delta c+\mathcal{O}(\Delta c^2, r_0^{4/k^2}\Delta c)\\
            &=\nu(r_0)\Delta c+\mathcal{O}(\Delta c^2, r_0^{4/k^2}\Delta c),
            \label{W^in final}
        \end{aligned}
    \end{equation}    
    where $\nu(r_0)$ is as defined in~\eqref{def:vr0}. 
    %as in the statement of the lemma.
    \textcolor{black}{In particular, the invariance of $\{W=0\}$ for $\Delta c=0$ in the normal form, Equation~\eqref{k>2 norm}, implies that the error term in \eqref{W^in final} has to be proportional to $\Delta c$, as stated.}
\end{proof}
\textcolor{black}{
\begin{remark}\label{negative}
Lemma~\ref{in/outlemma} implies that $W^{\rm in}$ and $W^{\rm out}$ are both negative for $\Delta c$ and $r_0$ sufficiently small. In particular, $\nu(r_0)$, as defined in \eqref{def:vr0}, is negative, which follows from the identity ${_2}F_1(a,b,c,1)=\frac{\Gamma(c)\Gamma(c-a-b)}{\Gamma(c-a)\Gamma(c-b)}$; see, e.g.,~\cite[Equation 15.1.20]{abramowitz+stegun}. Incidentally, that identity is valid for $\Re(c-a-b)>0$ which, for $a=1+4/k^2$, $b=4/k^2$, and $c=2+4/k^2$, is equivalent to requiring $k>2$.
\end{remark}
}

Instead of integrating the ``full" normal form in \eqref{k>2 norm} to determine the leading-order asymptotics of $\Delta c(\varepsilon)$, we will consider the simplified equations that are obtained by omitting the higher-order $\mathcal{O}(r_1W^j)$-terms with $j\geq 3$ therein: 
\begin{equation} 
    \dot{\widehat{W}}=-\Delta c+\frac{\left(\frac{k}{2}-\frac{2}{k}\right)\widehat{W}-\widehat{W}^2}{\frac{k}{2}-\widehat{W}}.
    \label{simplifed NF}
\end{equation}

We now show that, to leading order, the asymptotics of $\Delta c(\varepsilon)$ is not affected by the omission of the $\mathcal{O}(r_1W^j)$-terms in~\eqref{k>2 norm}. In our proof, we make the {\it a~priori} assumption that $\Delta c=\mathcal{O}\big(\varepsilon^{1-4/k^2}\big)$, which we then show to be consistent in Proposition~\ref{Dc prop}.

\begin{lem} Let $\zeta \in [0, \zeta^{\rm out}]$, with $W^{\rm in}$ and $W^{\rm out}$ defined as in Lemma~\ref{in/outlemma}, let $k>2$, and let $\varepsilon\in(0,\varepsilon_0)$, with $\varepsilon_0>0$ sufficiently small. Then, for $W^{\rm in}=W(0)=\widehat{W}^{\rm in}$, we have
\[
 \big|W^{\rm out}-\widehat{W}^{\rm out}\big|=\mathcal{O}(\varepsilon^\kappa),
\] 
where $\kappa>\frac{1}{2}$.
\end{lem}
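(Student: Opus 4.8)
The plan is to control the difference between the flow of the full normal form \eqref{k>2 norm} and that of the simplified equation \eqref{simplified NF} by a Gronwall-type estimate, carefully tracking the size of the perturbation terms along the orbit and the length of the transition time. First I would record that, in chart $K_1$ after the normal-form transformation, $r_1(\xi_1)=r_0 e^{-\xi_1}$ solves the $r_1$-equation explicitly, so that $r_1$ decays monotonically from $r_0$ at entry to $\varepsilon$ at exit; hence the total transition ``time'' is $\xi_1^{\rm out}=\ln(r_0/\varepsilon)$, which is $\mathcal{O}(\ln(1/\varepsilon))$. The key point is that the discarded terms are of the form $\mathcal{O}(r_1 W^j)$ with $j\ge 3$, and along the orbit $r_1=r_0 e^{-\xi_1}$ decays exponentially while $W$ stays $\mathcal{O}(\Delta c)$ near the entry (by Lemma~\ref{in/outlemma}) and only grows to $\mathcal{O}(1)$ near the exit; so the perturbation is integrable and small.

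Concretely, I would write $D(\xi_1):=W(\xi_1)-\widehat W(\xi_1)$ with $D(0)=0$, subtract the two $W$-equations, and estimate
\[
\dot D = \Big[g(W)-g(\widehat W)\Big] + \mathcal{O}(r_1 W^j),
\]
where $g(W)=\big[(\tfrac k2-\tfrac2k)W-W^2\big]/(\tfrac k2-W)$ is smooth and Lipschitz on the relevant (small) $W$-range, with Lipschitz constant $L$ equal (to leading order) to the linear eigenvalue $1-4/k^2$ of \eqref{expanded} at the origin. This yields $|\dot D|\le L|D| + C r_1(\xi_1)\,|W(\xi_1)|^3$. Applying Gronwall on $[0,\xi_1]$ and using $r_1(\xi_1)=r_0e^{-\xi_1}$ gives
\[
|D(\xi_1)| \le C e^{L\xi_1}\int_0^{\xi_1} e^{-L s}\, r_0 e^{-s}\,|W(s)|^3\, ds.
\]
To close this I need a uniform bound $|W(s)|\le C'$ on the whole transition (valid since $W$ goes from $\mathcal{O}(\Delta c)$ to $-2/k+\Delta c+\mathcal{O}(\varepsilon)$, all bounded), after which the integral is $\mathcal{O}(r_0)=\mathcal{O}(1)$ times $e^{L\xi_1}$-type factors; evaluating at $\xi_1=\xi_1^{\rm out}=\ln(r_0/\varepsilon)$ turns $e^{L\xi_1^{\rm out}}$ into $(r_0/\varepsilon)^{1-4/k^2}$ and $r_1(\xi_1^{\rm out})=\varepsilon$ into an extra factor $\varepsilon$, so the worst contribution is $\mathcal{O}\big(\varepsilon\cdot\varepsilon^{-(1-4/k^2)}\big)=\mathcal{O}(\varepsilon^{4/k^2})$ — but this is too weak for $k$ large. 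The sharper route is to split the transition interval at some intermediate time where $W$ first becomes $\mathcal{O}(1)$: on the early part $|W|=\mathcal{O}(\Delta c)=\mathcal{O}(\varepsilon^{1-4/k^2})$ by the a~priori assumption, so the cube is $\mathcal{O}(\varepsilon^{3(1-4/k^2)})$ and is amply small; on the late part $\xi_1$ is bounded (independent of $\varepsilon$) so $e^{L\xi_1}$ is $\mathcal{O}(1)$ and the $r_1=\mathcal{O}(\varepsilon)$ prefactor dominates, giving $\mathcal{O}(\varepsilon)$. Combining the two pieces yields $|W^{\rm out}-\widehat W^{\rm out}|=\mathcal{O}(\varepsilon^{\kappa})$ with $\kappa=\min\{1,\,3(1-4/k^2),\,\tfrac12+\delta\}>\tfrac12$ for the relevant range of $k$ (and one shrinks $\varepsilon_0$ as needed near $k=2\sqrt2$ where $3(1-4/k^2)$ could dip, but it stays above $\tfrac12$ once $k$ is bounded away from the degenerate value; the statement only claims $\kappa>\tfrac12$).

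The main obstacle I anticipate is the bookkeeping near $k=2\sqrt{2}$, where the resonant monomial $r_1W^3$ is exactly resonant and cannot be removed, so the $\mathcal{O}(r_1W^j)$ remainder genuinely starts at $j=3$; one must check that even this borderline term, integrated against the exponentially decaying $r_1$ and the a~priori-small $W$ on the early interval, still contributes at an order strictly better than $\varepsilon^{1/2}$. A secondary technical care point is justifying the uniform-in-$\varepsilon$ bound on $W$ along the orbit and the uniform Lipschitz constant for $g$, which requires confining the orbit to a fixed small neighbourhood in $W$; this follows from the explicit endpoint values in Lemma~\ref{in/outlemma} together with monotonicity of $W$ in the relevant regime, but needs to be stated carefully. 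Once $\kappa>\tfrac12$ is established, this lemma feeds directly into the subsequent computation of $\Delta c(\varepsilon)$: the leading-order asymptotics can be read off from the integrable simplified equation \eqref{simplified NF}, and the $\mathcal{O}(\varepsilon^{\kappa})$ error is lower order than the claimed $\varepsilon^{1-4/k^2}$ precisely because $1-4/k^2<\tfrac12<\kappa$ for $2<k<2\sqrt2$ — and for $k\ge 2\sqrt2$ one argues analogously with the explicit solution, so the omission is harmless throughout.
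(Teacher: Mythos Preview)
Your overall Gronwall strategy is right in spirit and matches the paper's approach, but there is a genuine gap that makes the argument fail for $k$ close to $2$.

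The problem is your uniform use of $j=3$ for the remainder $\mathcal{O}(r_1W^j)$. Your early-interval estimate claims a contribution of order $\varepsilon^{3(1-4/k^2)}$ (and with the Gronwall prefactor $e^{L\xi_1^{\rm out}}\sim\varepsilon^{-(1-4/k^2)}$ properly included it is at best $\varepsilon^{2(1-4/k^2)}$), but $1-4/k^2\to 0$ as $k\to 2^+$, so this exponent drops below $\tfrac12$ for all $k$ in a right-neighbourhood of $2$ (concretely, $2(1-4/k^2)<\tfrac12$ for $k<4/\sqrt3\approx 2.31$). Your hedge locates the difficulty ``near $k=2\sqrt2$'', but it is the opposite end, $k\to 2^+$, where the bound collapses; at $k=2\sqrt2$ one has $3(1-4/k^2)=3/2$, which is harmless. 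Shrinking $\varepsilon_0$ cannot repair an exponent that is simply too small. A second, smaller issue: the claim ``$|W|=\mathcal{O}(\Delta c)$ on the early part'' is not correct, since $W$ grows like $e^{(1-4/k^2)\xi_1}\Delta c$ along the linearised flow, and this growth must be tracked in the integral.

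The paper closes this gap by two devices you do not use. First, it exploits that the resonant exponent is $j(k)=\frac{2-4/k^2}{1-4/k^2}$, which tends to infinity as $k\to 2^+$; for each fixed $k$ the normal form can therefore be pushed to remainder $\mathcal{O}(r_1W^{j(k)})$, and the large $j(k)$ precisely compensates the small linear rate $1-4/k^2$. Second, rather than splitting the interval and bounding $|W|$ crudely, the paper derives a sharp estimate for the product $y=r_1W$ by writing down and solving (via variation of constants) the scalar ODE $\dot y=-r_0e^{-\xi_1}\Delta c - yF(\xi_1)$, obtaining $|r_1W|\le C\,\Delta c\,\varepsilon^{1/(1+k^2/4)}$ uniformly. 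Feeding this into $|r_1W^j|=|r_1W|^j/r_1^{\,j-1}$ with the resonant $j(k)$ yields an explicit $\kappa(k)=\frac{k^4+16k^2-48}{2k^2(k^2+4)}$, which one checks is strictly greater than $\tfrac12$ for every $k>2$. Your argument can likely be salvaged by incorporating the $k$-dependent $j(k)$, but as written it does not establish the lemma for $2<k\lesssim 2.3$.
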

\begin{proof}
Considering the difference between the equations for $W$ and $\widehat W$ in \eqref{k>2 norm} and \eqref{simplifed NF}, respectively, and multiplying the result with $W-\widehat W$, we find 
\[
 \big|W(\zeta)-\widehat W(\zeta)\big|^2 \leq \big|W^{\rm in}-\widehat W^{\rm in}\big|^2 + 2 \int_0^{\zeta}
\bigg[\bigg|\frac{(k-4/k)W-2W^2}{k-2W} - \frac{(k-4/k)\widehat{W}-2\widehat{W}^2}{k-2\widehat{W}}\bigg| 
+ C |r_1 W^j| \bigg]\big|W-\widehat W\big|\,ds,
\]
where $C$ is a generic constant.
For $\zeta\in[0, \zeta^{\rm out}]$ and $k>2$, the integral term $\mathcal{I}$ in the above inequality is estimated as 
\begin{align*}
    \mathcal{I} &\leq 2\int_0^{\zeta} \big|W-\widehat W\big|^2 
    \Bigg| \frac{5}{4}- \frac 4{(k-2W)(k-2\widehat W)} \Bigg|\,ds  + C \int_0^{\zeta} |r_1 W^j|^2\,ds,
\end{align*}
where we have used Young's inequality. 
Since $\Big|\frac{5}{4}-\frac{4}{(k-2W)(k-2\widehat{W})}\Big|$ is monotonic for $W,\widehat{W}\in[W^{\rm out}, W^{\rm in}]$,
and since $W^{\rm out}=-\frac{2}{k}+\Delta c-\frac{k}{2}\varepsilon+\mathcal{O}(\varepsilon^2)$ by Lemma~\ref{in/outlemma}, with $\Delta c=\mathcal{O}(\varepsilon^{1-4/k^2})$ positive, there exists $\varepsilon_0>0$ sufficiently small such that $-\frac{2}{k}\leq W^{\rm out}$ for $\varepsilon\in(0, \varepsilon_0)$. Therefore, we can estimate $\Big|\frac{5}{4}-\frac{4}{(k-2W)(k-2\widehat{W})}\Big|\leq \frac{5}{4}-\frac{4}{(k+4/k)^2}\leq\frac{5}{4}-\frac{1}{k^2}$.

Thus, taking $W^{\rm in}=W(0)=\widehat{W}^{\rm in}$, we have
\[
 \big|W(\zeta)-\widehat W(\zeta)\big|^2 \leq %|W(0)-\widehat W(0)|^2+ 
 2\bigg(\frac{5}{4}-\frac{1}{k^2}\bigg) \int_0^{\zeta} \big|W-\widehat W\big|^2\,ds + C \int_0^{\zeta} |r_1 W^j|^2\,ds.
\]
An application of the Gr\o nwall inequality then yields 
 \begin{align}
     \big|W(\zeta)-\widehat W(\zeta)\big|^2&\leq C e^{ 2\big(\frac{5}{4}-\frac{1}{k^2} \big)\zeta}  \int_0^{\zeta} |r_1 W^j|^2  \,ds.
     \label{Gron-application}
\end{align}
Next, we write $|r_1 W^j|=\frac{|(r_1 W)^j|}{r_1^{j-1}}= \frac{|r_1 W|^j}{r_0^{j-1}}e^{(j-1)\zeta}$. To estimate $r_1W$, we consider
\[
    (r_1W)'=-r_1W+r_1W' %\\
    %&=-r_1W-r_1\Delta c+r_1 \frac{\left(\frac{k}{2}-\frac{2}{k}\right)W-W^2}{\frac{k}{2}-W}[1+\mathcal{O}(r_1 W^j)]\\
    =-r_1\Delta c-r_1\frac{ \frac{2}{k}W}{\frac{k}{2}-W}[1+\mathcal{O}(r_1W^j)].
\]

Setting $y:=r_1W$, making use of $r_1(\zeta)=r_0e^{-\zeta}$, and denoting $F(\zeta)\equiv F(r_1(\zeta),W(\zeta))=\frac{2/k}{k/2-W(\zeta)}\big[1+\mathcal{O}(r_1(\zeta)$ $W(\zeta)^j)\big]$, we can write the above as $y'=-r_0e^{-\zeta}\Delta c-yF(\zeta)$. Solving by variation of constants, with \textcolor{black}{$y(0)=r_0W^{\rm in}=r_0\Delta c\,\omega(\Delta c,r_0)$ for $\omega(\Delta c,r_0)=\nu(r_0)+\mathcal{O}(\Delta c,r_0^{4/k^2})$, recall Lemma~\ref{in/outlemma},} we find
\begin{align*}
y(\zeta) &=r_0\Delta c e^{-\int_0^{\zeta}F(s)\,ds}\Big[\omega(\Delta c,r_0)-\int_0^{\zeta}\exp\Big(-s+\int_0^sF(\sigma)\,d\sigma\Big)ds\bigg] \\
%&=r_0\Delta c e^{-\int_0^{\zeta}F(s),ds}\bigg[1-\bigg(-e^{-s}\exp\Big(\int_0^sF(\sigma)\,d\sigma\Big)\bigg|_{s=0}^{\zeta}+\int_0^{\zeta}e^{-s}\exp\Big(\int_0^sF(\sigma)\,d\sigma\Big)F(s)\,ds\bigg)\bigg] \\
&=r_0\Delta c e^{-\int_0^{\zeta}F(s)\,ds}\bigg[\omega(\Delta c,r_0)-1+e^{-\zeta}\exp\Big(\int_0^{\zeta}F(\sigma)\,d\sigma\Big)-\int_0^{\zeta}e^{-s}\exp\Big(\int_0^sF(\sigma)\,d\sigma\Big)F(s)\,ds\bigg].
\end{align*}
Here, the second line follows from integration by parts. Since $-\frac2k\leq W^{\rm out}$, and since $W\in[W^{\rm out},W^{\rm in}]$, we can estimate
$\frac{1}{1+k^2/4}\leq F(\zeta)$
for $\zeta\in[0,\zeta^{\rm out}]$ and $r_0$ sufficiently small. Similarly, for every fixed $k>2,$ there exists $\mu$ such that $k^2>\mu>4$, which implies $F(\zeta)\leq \frac{\mu}{k^2}$ for $\varepsilon$ and $r_0$ sufficiently small.

Hence, \textcolor{black}{and since $\omega(\Delta c,r_0)$ is negative for $\Delta c$ and $r_0$ sufficiently small, by Remark~\ref{negative}}, we find
\begin{equation}
    |y(\zeta)|\leq r_0\Delta ce^{-\frac1{1+k^2/4}\zeta}\bigg[1-\omega(\Delta c,r_0)+e^{-(1-\mu/k^2)\zeta}+\frac{\mu}{k^2}\int_0^{\zeta}e^{-(1-\mu/k^2)s}\,ds\bigg].
    \label{squarebracket}
\end{equation}

Since the term in square brackets in \eqref{squarebracket} is bounded 
%between $1$ and \frac{\mu}{k^2(1-\mu/k^2)}>0
for $\zeta\in[0,\infty)$, we find that
\[
|(r_1W)(\zeta^{\rm out})|\leq C\Delta c\varepsilon^{\frac{1}{1+k^2/4}},
\]
where $\zeta^{\rm out}=-\ln\frac{\varepsilon}{r_0}$, as before.

Therefore, we can estimate 
\begin{equation}
        \int_0^{\zeta^{\rm out}} |r_1 W^j|^2\,ds\leq C \int_0^{\zeta^{\rm out}} \Big(\Delta c\varepsilon^{\frac{1}{1+k^2/4}}\Big)^{2j}\frac{e^{(j-1)s}}{r_0^{j-1}}\,ds\leq C \Big(\Delta c\varepsilon^{\frac{1}{1+k^2/4}}\Big)^{2j}\varepsilon^{-(j-1)}=\mathcal{O}\Big(\varepsilon^{2j(1-4/k^2) + \frac{2j}{1+k^2/4}-j+1}\Big),
        \label{integralr_1w^j}
\end{equation}
where we have made use of $\Delta c=\mathcal{O}\big(\varepsilon^{1-4/k^2}\big)$. Finally, we recall that the $r_1W^j$-terms in \eqref{expanded} can only be resonant for integer-valued $j=\frac{2-4/k^2}{1-4/k^2}$, which implies that \eqref{integralr_1w^j} is of the order $\mathcal{O}\Big(\varepsilon^{\frac{3k^4+16k^2-32}{k^2(k^2+4)}}
%\varepsilon^{\frac{4 (k^2-2)(k^4+4k^2-16)}{k^2(k^2-4)(k^2+4)}}
\Big)$.
Furthermore, the exponential term in \eqref{Gron-application} satisfies $e^{ 2\big(\frac{5}{4}-\frac{1}{k^2}\big)\zeta^{\rm out}}=\mathcal{O}\big(\varepsilon^{2/k^2-5/2}\big)$. Combining the above, we conclude that $|W^{\rm out}-\widehat{W}^{\rm out}|=\mathcal{O}(\varepsilon^{\kappa(k)})$, where $\kappa(k)=\frac{k^4+16k^2-48}{2k^2(k^2+4)}$.
%\frac{10k^4-32k^2+96}{k^2(k^2-4)(k^2+4)}+\frac{3}{2}$. 
We note that $\kappa(k)>\frac{1}{2}$ 
%is monotonically decreasing 
for $k\in(2, \infty)$, with $\lim_{k\to \infty}\kappa(k)=\frac{1}{2}$.
Hence, it follows that $\big|W^{\rm out}-\widehat{W}^{\rm out}\big|=\mathcal{O}(\varepsilon^\kappa)\to 0$ as $\varepsilon\to0$ with $\kappa>\frac{1}{2}$, as stated.
\end{proof}

We can now solve \eqref{simplifed NF} by separation of variables,
\begin{equation}
    \begin{aligned}
        \frac{-2kW+k^2}{-2kW^2+W(k^2+2\Delta c k-4)-\Delta ck^2}dW&=d\zeta,
        \label{integrated_normal_form}
    \end{aligned}
\end{equation}
where we have omitted overhats from $\widehat{W}$ for simplicity of notation.
%
\begin{comment}
From the Wikipedia table of integrals of rational functions, we 
have the following result.
%
\begin{lem}
    If $4\alpha\gamma-\beta^2<0$ then
    \begin{equation}
        \int \frac{mx+n}{\alpha x^2+\beta x+\gamma}dx=\frac{m}{2\alpha}\ln|\alpha x^2+\beta x+\gamma|+\frac{2\alpha n-\beta m}{2\alpha \sqrt{\beta^2-4\alpha\gamma}}\ln\left|\frac{2\alpha x+\beta-\sqrt{\beta^2-4\alpha\gamma}}{2\alpha x+\beta+\sqrt{\beta^2-4\alpha\gamma}}\right|.
        \label{rational_integral}
    \end{equation}
\end{lem}
Setting $m=-2k, n=k^2$, $\alpha=-2k$, $\beta=k^2+2\Delta c k-4$, and $\gamma=-\Delta c k^2$, we find that $4\alpha\gamma-\beta^2<0$ for $\Delta c$ sufficiently small. 
\end{comment}
Integration of \eqref{integrated_normal_form} gives
\begin{equation}
    \begin{aligned}
        & \zeta^{\rm out}-\zeta^{\rm in}-\frac{1}{2}\ln\big|-2kW^2+(k^2+
        2k\Delta c -4)W- k^2\Delta c\big|\Big\lvert_{W^{\rm in}}^{W^{\rm out}}\\
        &\; -\frac{\frac{k^2}{2}+2-k\Delta c}{\sqrt{(k^2+2k\Delta c -4)^2-8 k^3\Delta c}} \\
        &\; \times\ln\bigg|\frac{-4kW+k^2+2k\Delta c -4-\sqrt{(k^2+2k\Delta c -4)^2-8 k^3\Delta c}}{-4kW+k^2+2k\Delta c -4+\sqrt{(k^2+2k\Delta c -4)^2-8 k^3\Delta c}}\bigg|\bigg\lvert_{W^{\rm in}}^{W^{\rm out}}=0.
        \label{int_2}
    \end{aligned}
\end{equation}

Recall that, as $\varepsilon_1(\zeta)=\frac{\varepsilon}{r_0}e^{\zeta}$, we have $\zeta^{\rm in}=0$ and $\zeta^{\rm out}=-\ln\frac{\varepsilon}{r_0}$ in \eqref{int_2}. Moreover, by Lemma~\ref{in/outlemma}, 
\begin{align*}
W^{\rm in}=\nu(r_0)\Delta c+\mathcal{O}(\Delta c^2, r_0^{4/k^2}\Delta c)\quad\text{and}\quad
W^{\rm out}=-\frac{2}{k}+\Delta c-\frac{k}{2}\varepsilon+\mathcal{O}(\varepsilon^2).
\end{align*}

We now proceed as follows: given \eqref{int_2}, we derive a necessary condition on $\Delta c$ which will determine the leading-order asymptotics thereof in $\varepsilon$.

We begin by substituting our estimates for $W^{\rm in}$ and $W^{\rm out}$ into the first logarithmic term in \eqref{int_2}, which gives
\begin{equation}
    \begin{aligned}
        \frac{1}{2}&\ln\big|-2k{(W^{\rm in})}^2+(k^2+2k\Delta c -4)W^{\rm in}-k^2\Delta c\big|\\
        &=\frac{1}{2}\ln\big|(k^2-4)\nu(r_0)\Delta c-k^2\Delta c +\mathcal{O}(\Delta c^2, r_0^{4/k^2}\Delta c)\big|
    \end{aligned}
\end{equation}
and
\begin{equation}
    \begin{aligned}
        \frac{1}{2}&\ln\big|-2k{(W^{\rm out})}^2+(k^2+2k\Delta c -4)W^{\rm out}-k^2\Delta c\big|\\
        &=\frac{1}{2}\ln\big|-2k+\mathcal{O}(\Delta c, \varepsilon)\big|,
    \end{aligned}
\end{equation}
respectively.

Now, we expand the rational function multiplying the second logarithmic term in \eqref{int_2} as
\begin{equation}
    -\frac{\frac{k^2}{2}+2-k\Delta c}{\sqrt{(k^2+2k\Delta c -4)^2-8 k^3\Delta c }}=-\frac{k^2+4}{2(k^2-4)}-\frac{16 k^3}{(k^2-4)^3}\Delta c+\mathcal{O}(\Delta c^3),
\end{equation}
and we write the argument of the logarithm therein as
\begin{equation}
    \begin{aligned}
        &\frac{-4kW+k^2+2k\Delta c -4-\sqrt{(k^2+2k\Delta c -4)^2-8 k^3\Delta c}}{-4kW+k^2+2k\Delta c -4+\sqrt{(k^2+2k\Delta c -4)^2-8 k^3\Delta c}}\\
        &\quad=-1+2\frac{-4kW+k^2+2k\Delta c -4}{-4kW+k^2+2k\Delta c -4+\sqrt{(k^2+2k\Delta c -4)^2-8 k^3\Delta c}}.
        \label{rational break}
    \end{aligned}
\end{equation}

Substituting the estimate for $W^{\rm in}$ into \eqref{rational break}, we have
\begin{equation}
    \begin{aligned}
        -1 &+2\frac{(-4k\nu(r_0)+2k)\Delta c+k^2-4}{-4k\nu(r_0)\Delta c+k^2+2k\Delta c -4+\sqrt{(k^2+2k\Delta c -4)^2-8 k^3\Delta c}}+\mathcal{O}(\Delta c^2, r_0^{4/k^2}\Delta c)\\
        &=\bigg[\frac{2k^3}{(k^2-4)^2}-\frac{2k\nu(r_0)}{k^2-4}\bigg]\Delta c+\mathcal{O}(\Delta c^2, r_0^{4/k^2}\Delta c).
    \end{aligned}
\end{equation}

Similarly, we can use our estimate for $W^{\rm out}$ in \eqref{rational break} to obtain
\begin{equation}
    \begin{aligned}
        &-1+2\frac{4+k^2}{2k^2}+\mathcal{O}(\Delta c, \varepsilon)=\frac{4}{k^2}+\mathcal{O}(\Delta c, \varepsilon).
    \end{aligned}
\end{equation}

Summarising the above calculations, we can write \eqref{int_2}
as 
\begin{equation}
    \begin{aligned}
        &-\ln\frac{\varepsilon}{r_0}+\frac{1}{2}\ln\big|(k^2-4)\nu(r_0)\Delta c- k^2\Delta c+\mathcal{O}(\Delta c^2, r_0^{4/k^2}\Delta c)\big|-\frac{1}{2}\ln|-2k+\mathcal{O}(\Delta c, \varepsilon)|\\
        &\quad 
        - \bigg[\frac{k^2+4}{2(k^2-4)}+\mathcal{O}(\Delta c)\bigg]\bigg[-\ln \bigg|\bigg[\frac{2 k^3}{(k^2-4)^2}-\frac{2k \nu(r_0)}{k^2-4}\bigg]\Delta c +\mathcal{O}(\Delta c^2, r_0^{4/k^2}\Delta c)\bigg|\\
        &\quad+\ln\bigg|\frac{4}{k^2}+\mathcal{O}(\Delta c, \varepsilon)\bigg|\bigg]=0.
        \label{Normalformsub}
    \end{aligned}
\end{equation}
Now, we exponentiate \eqref{Normalformsub} to obtain 
\begin{equation}
    \begin{aligned}
        \bigg(\frac{\varepsilon}{r_0}\bigg)^2&=\frac{[k^2-(k^2-4)\nu(r_0)]\Delta c+\mathcal{O}(2)}{2k+\mathcal{O}(1)}
        \times \Bigg(\frac{\Big[\frac{2 k^3}{(k^2-4)^2}-\frac{2k \nu(r_0)}{k^2-4}\Big]\Delta c +\mathcal{O}(2)}{\frac{4}{k^2}+\mathcal{O}(1)}\Bigg)^{\frac{k^2+4}{k^2-4}},
        \label{expo}
    \end{aligned}\end{equation}

where $\mathcal{O}(1)$ denotes terms that are of at least
order $1$ in $\Delta c$ and $\varepsilon$, while $\mathcal{O}(2)$ stands for terms of at least order $2$ in $\Delta c$ and $r_0^{4/k^2}$. 
Solving for $\Delta c$ in \eqref{expo}, we obtain
\begin{equation}
    \begin{aligned}
        \Delta c=\alpha(k)\varepsilon^{1-\frac{4}{k^2}}[1+o(1)],
        \label{epspower}
    \end{aligned}
\end{equation}
where 
\begin{equation}
    \begin{aligned}
        \alpha(k)&=\frac{1}{r_0^{1-4/k^2}\big[k^2-(k^2-4)\nu(r_0)\big]}\frac{ (2k)^{1/2(1-4/k^2)}\big[2(k^2-4)^2\big]^{1/2(1+4/k^2)} }{k^{3/2(1+4/k^2)}}.
        \label{alphadef}
    \end{aligned}
\end{equation}
For future reference, we label the $r_0$-dependent contribution to $\alpha(k)$ as
\begin{equation}
    \begin{aligned}
        \delta(r_0)&=r_0^{1-4/k^2} \big[k^2-(k^2-4)\nu(r_0) \big]. \label{firstcoeff}
    \end{aligned}
\end{equation}
In spite of the function $\nu(r_0)$, as defined in Lemma~\ref{in/outlemma}, being dependent on $r_0$, that dependence must cancel, as the choice of $r_0$ in the definition of $\Sigma_1^{\rm in}$ is arbitrary. Therefore, we can take the limit as $r_0\to 0^+$ in \eqref{firstcoeff}.

\begin{lem}
    The function $\delta$ defined in Equation~\eqref{firstcoeff} satisfies 
    \begin{equation}
        \lim_{r_0\to0^+}\delta(r_0)=(k^2-4)\Gamma(1+4/k^2)\Gamma(1-4/k^2),
    \end{equation}
    where $k>2$.
    \label{limitlem}
\end{lem}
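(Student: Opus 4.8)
The plan is to substitute the definitions of $\delta$ and $\nu$, observe that the elementary term vanishes in the limit $r_0\to 0^+$, and evaluate the surviving hypergeometric factor at the boundary point of its disc of convergence by Gauss's summation theorem.

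First I would insert \eqref{def:vr0} into \eqref{firstcoeff}. The powers $r_0^{1-4/k^2}$ and $r_0^{4/k^2-1}$ coming from $\delta$ and $\nu$ cancel, leaving
\[
\delta(r_0)=(k^2-4)\,\frac{k^2}{k^2+4}\,(1-r_0)\,{}_2F_1\!\left(1+\tfrac{4}{k^2},\tfrac{4}{k^2},2+\tfrac{4}{k^2},1-r_0\right)-k^2 r_0^{\,1-4/k^2}.
\]
Since $k>2$ we have $1-4/k^2>0$, so the last term tends to $0$ as $r_0\to 0^+$, and it remains to identify the limit of the hypergeometric term as its argument $z=1-r_0$ tends to $1^-$.

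Next I would record that the parameters $a=1+4/k^2$, $b=4/k^2$, $c=2+4/k^2$ satisfy $c-a-b=1-4/k^2>0$, which is precisely the positivity that is in force throughout the pushed regime $k>2$; by this inequality the hypergeometric series converges at $z=1$, and by Abel's continuity theorem ${}_2F_1(a,b,c,z)\to{}_2F_1(a,b,c,1)$ as $z\to 1^-$. Gauss's summation theorem \cite[Section~15]{abramowitz+stegun}, together with $c-a=1$, $c-b=2$, and the functional equation $\Gamma(2+4/k^2)=(1+4/k^2)\Gamma(1+4/k^2)$, then yields
\[
{}_2F_1\!\left(1+\tfrac{4}{k^2},\tfrac{4}{k^2},2+\tfrac{4}{k^2},1\right)=\frac{\Gamma(2+4/k^2)\,\Gamma(1-4/k^2)}{\Gamma(1)\,\Gamma(2)}=\left(1+\tfrac{4}{k^2}\right)\Gamma\!\left(1+\tfrac{4}{k^2}\right)\Gamma\!\left(1-\tfrac{4}{k^2}\right).
\]

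Finally I would pass to the limit in the displayed formula for $\delta(r_0)$ and simplify the prefactor using $\frac{k^2}{k^2+4}\big(1+\frac{4}{k^2}\big)=1$, which gives
\[
\lim_{r_0\to0^+}\delta(r_0)=(k^2-4)\,\frac{k^2}{k^2+4}\left(1+\tfrac{4}{k^2}\right)\Gamma\!\left(1+\tfrac{4}{k^2}\right)\Gamma\!\left(1-\tfrac{4}{k^2}\right)=(k^2-4)\,\Gamma\!\left(1+\tfrac{4}{k^2}\right)\Gamma\!\left(1-\tfrac{4}{k^2}\right),
\]
as claimed. No step here is a genuine obstacle; the only point that needs a word of justification is the interchange of the limit $z\to 1^-$ with the hypergeometric summation, and this is licensed exactly by $c-a-b=1-4/k^2>0$, valid since $k>2$.
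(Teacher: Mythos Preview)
Your proof is correct and follows essentially the same route as the paper: substitute $\nu(r_0)$ into $\delta(r_0)$, let $r_0\to 0^+$, evaluate ${}_2F_1(1+4/k^2,4/k^2,2+4/k^2,1)$ via Gauss's summation formula, and simplify using $\Gamma(2+4/k^2)=(1+4/k^2)\Gamma(1+4/k^2)$. Your version is slightly more careful in justifying the passage to the limit via the condition $c-a-b=1-4/k^2>0$, which the paper leaves implicit.
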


\begin{proof}
    We begin by writing $\delta(r_0)$ as 
    \begin{equation}
        \delta(r_0)=r_0^{1-4/k^2}k^2+\frac{k^2}{k^2+4}(k^2-4)(1-r_0) {_2}F_1\big(1+4/k^2, 4/k^2, 2+4/k^2, 1-r_0\big) ,
    \end{equation}
    using the definition of $\nu(r_0)$ from Lemma~\ref{in/outlemma}.
    Taking $r_0\to 0^+$, we find
    \begin{equation}
        \begin{aligned}
            \lim_{r_0\to0^+}\delta(r_0)&=\frac{k^2}{k^2+4} (k^2-4)\;{_2}F_1\big(1+4/k^2, 4/k^2, 2+4/k^2, 1\big)\\
            &=\frac{k^2-4}{1+4/k^2}\frac{\Gamma(2+4/k^2)\Gamma(1-4/k^2)}{\Gamma(1)\Gamma(2)}\\
            &=(k^2-4)\Gamma(1+4/k^2)\Gamma(1-4/k^2).
        \end{aligned}
    \end{equation}
    Here, we have used the identities ${_2}F_1(a,b,c,1)=\frac{\Gamma(c)\Gamma(c-a-b)}{\Gamma(c-a)\Gamma(c-b)}$~\cite[Equation 15.1.20]{abramowitz+stegun} and $\Gamma(2+4/k^2)=(1+4/k^2)\Gamma(1+4/k^2)$, \textcolor{black}{as well as the fact that $\Gamma(1)=1=\Gamma(2)$}, which completes the proof.
\end{proof}

\begin{prop}
    Let $\varepsilon\in(0,\varepsilon_0)$, with $\varepsilon_0>0$ sufficiently small, and let $k>2$. Then, the function $\Delta c$ defined in Theorem~\ref{Myfirstthm} satisfies
    \begin{equation}
        \Delta c(\varepsilon)=\frac{2}{k^{1+8/k^2}}\frac{(k^2-4)^{4/k^2}}{\Gamma(1+4/k^2)\Gamma(1-4/k^2)}\varepsilon^{1-4/k^2}[1+o(1)].
        \label{Dc final}
    \end{equation}
    \label{Dc prop}
\end{prop}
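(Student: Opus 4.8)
The plan is to extract the leading-order asymptotics of $\Delta c$ directly from Equation~\eqref{expo}, using the cancellation of the $r_0$-dependence established via Lemma~\ref{limitlem}. First I would observe that \eqref{epspower}--\eqref{alphadef} already identify $\Delta c = \alpha(k)\,\varepsilon^{1-4/k^2}[1+o(1)]$, so that the task reduces to simplifying the constant $\alpha(k)$. The key point is that $\alpha(k)$ contains the $r_0$-dependent factor $\delta(r_0)^{-1}$, where $\delta(r_0)=r_0^{1-4/k^2}\big[(k^2-4)\nu(r_0)-k^2\big]$ as in \eqref{firstcoeff}; since the section $\Sigma_1^{\rm in}$ was chosen with an arbitrary $r_0$, the limit $r_0\to 0^+$ must exist and equal a finite, $r_0$-independent value, which is precisely the content of Lemma~\ref{limitlem}. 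I would therefore substitute $\lim_{r_0\to 0^+}\delta(r_0)=(k^2-4)\Gamma(1+4/k^2)\Gamma(1-4/k^2)$ into \eqref{alphadef}.

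Next I would carry out the algebraic simplification of the remaining $k$-dependent prefactor in \eqref{alphadef}. Writing the numerator as $(2k)^{\frac12(1-4/k^2)}\big[2(k^2-4)^2\big]^{\frac12(1+4/k^2)}\big/(k^3)^{\frac12(1+4/k^2)}$, I would combine the powers of $2$ to get $2^{\frac12(1-4/k^2)+\frac12(1+4/k^2)}=2$, collect the powers of $k$ as $k^{\frac12(1-4/k^2)}\cdot k^{-\frac32(1+4/k^2)}=k^{-1-8/k^2}$, and collect the powers of $(k^2-4)$ as $(k^2-4)^{1+4/k^2}$. Dividing by $\delta(0)=(k^2-4)\Gamma(1+4/k^2)\Gamma(1-4/k^2)$ then leaves $(k^2-4)^{4/k^2}$ in the numerator, and one arrives at
\begin{equation*}
\alpha(k)=\frac{2}{k^{1+8/k^2}}\frac{(k^2-4)^{4/k^2}}{\Gamma(1+4/k^2)\Gamma(1-4/k^2)},
\end{equation*}
which is exactly the coefficient in \eqref{Dc final}. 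This also confirms, a~posteriori, the consistency of the a~priori assumption $\Delta c=\mathcal{O}(\varepsilon^{1-4/k^2})$ used in the preceding lemma, so that the omission of the $\mathcal{O}(r_1W^j)$-terms in the normal form \eqref{k>2 norm} is indeed harmless to this order.

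I do not expect a serious obstacle here; the result is essentially a bookkeeping consequence of \eqref{expo} together with Lemma~\ref{limitlem}. The one point requiring a little care is justifying that the formal manipulation of \eqref{Normalformsub}--\eqref{expo} — exponentiating the sum of logarithms, absorbing the $\mathcal{O}(\Delta c^2,\Delta c r_0, r_0^2)$ and $\mathcal{O}(\Delta c,\varepsilon)$ remainders into the $[1+o(1)]$ factor, and then inverting the relation $\big(\varepsilon/r_0\big)^2\sim C(k,r_0)\,\Delta c^{\,(k^2+4)/(k^2-4)+1}$ to solve for $\Delta c$ — is legitimate, i.e.\ that the exponent $1+\tfrac{k^2+4}{k^2-4}=\tfrac{2k^2}{k^2-4}$ is positive (so the inversion is well defined for $k>2$) and that the error terms genuinely contribute only to $o(1)$. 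Once that is noted, the proof is complete upon taking $r_0\to 0^+$ and invoking Lemma~\ref{limitlem}.
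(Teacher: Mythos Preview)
Your proposal is correct and follows essentially the same approach as the paper: the paper's proof is a one-line reference to Equations~\eqref{epspower}--\eqref{alphadef} together with Lemma~\ref{limitlem}, and you simply make the ensuing algebraic simplification of $\alpha(k)$ explicit. Your verification of the powers of $2$, $k$, and $(k^2-4)$, and the division by $\delta(0)$, is accurate and matches the claimed coefficient.
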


\begin{proof}
    The statement follows directly from Lemma~\ref{limitlem} and Equations~\eqref{epspower} and \eqref{alphadef}.
\end{proof}

Hence, the proof of Theorem~\ref{Myfirstthm} is complete in the pushed front propagation regime, which is realised for $k>2$ in \eqref{main}.

\begin{remark}
We note that $\varepsilon^{1-4/k^2}\to \varepsilon^0=1$ as $k\to 2^+$ in \eqref{Dc final}, i.e., as we approach the pulled propagation regime. L'H\^opital's Rule shows that the corresponding coefficient tends to $0$ in that limit, which is consistent with Theorem~\ref{Myfirstthm}, as the correction $\Delta c$ is logarithmic in $\varepsilon$ for $k\leq 2$.
\end{remark}

\begin{remark}
    A simplification of the general expression for $\Delta c$ in Equation~\eqref{Dc final} is achieved for specific values of $k$ in \eqref{main}; e.g., $k=2\sqrt{2}$ gives $c(\varepsilon)=c(0)-\Delta c(\varepsilon)=\sqrt{2}+\frac{1}{\sqrt{2}}-\frac{1}{\pi}\varepsilon^{1/2}[1+o(1)]$. Similarly, for $k=4$, we have 
    $c(\varepsilon)=\frac{5}{2}-\frac{\sqrt[4]{3}}{\pi}\varepsilon^{3/4}[1+o(1)]$.
\end{remark}

\subsection{Numerical verification} 
\label{numerics section}
In this subsection, we verify the asymptotics in Theorem~\ref{Myfirstthm} by calculating numerically the error incurred by ap\-prox\-i\-ma\-ting $c(\varepsilon)$ with the corresponding first-order expansion (in $\varepsilon$), which we denote by $\hat{c}(\varepsilon)$; for $k=4$, e.g., we have $\hat{c}(\varepsilon)=\frac{5}{2}-\frac{\sqrt[4]{3}}{\pi}\varepsilon^{3/4}$. The numerical value of $c(\varepsilon)$ is obtained by integrating Equation~\eqref{main} and storing the final value of $U=U_{\rm final}(c)$ obtained after a sufficiently large number of time steps. We then minimise $|U_{\rm final}(c)|$, taking $\hat{c}(\varepsilon)$ as our initial value of $c$. Our findings are illustrated in Figure~\ref{fig:k=4error} \textcolor{black}{for $k\in\{1,\frac{3}{2}, 2\sqrt{2}, 4$\}}, where we have used a double logarithmic scale, with $\varepsilon\in[10^{-4},10^{-2}]$.
\begin{figure}
    %\centering
    %\includegraphics[width=0.5\linewidth]{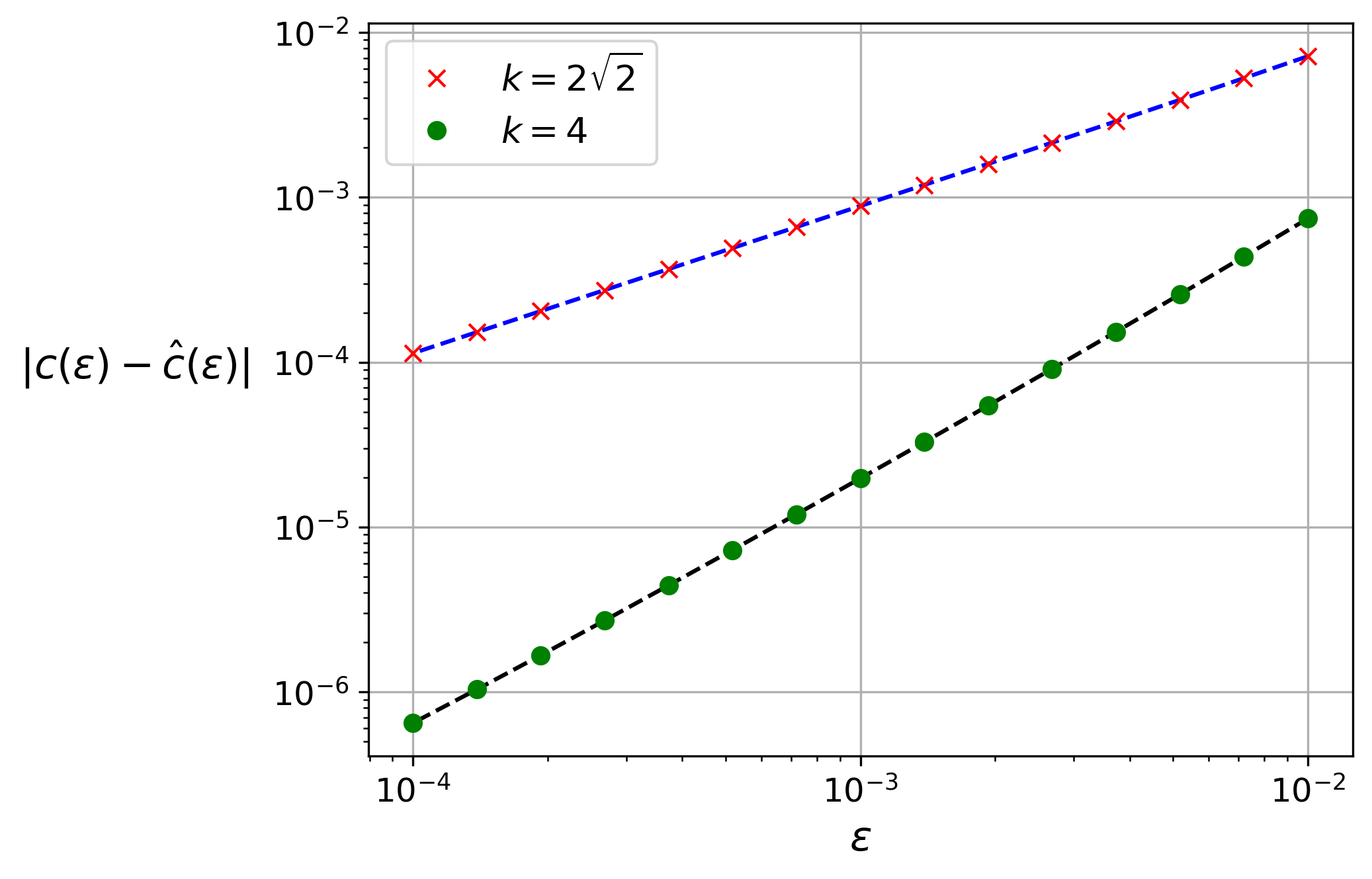}
    \includegraphics[scale = 0.75]{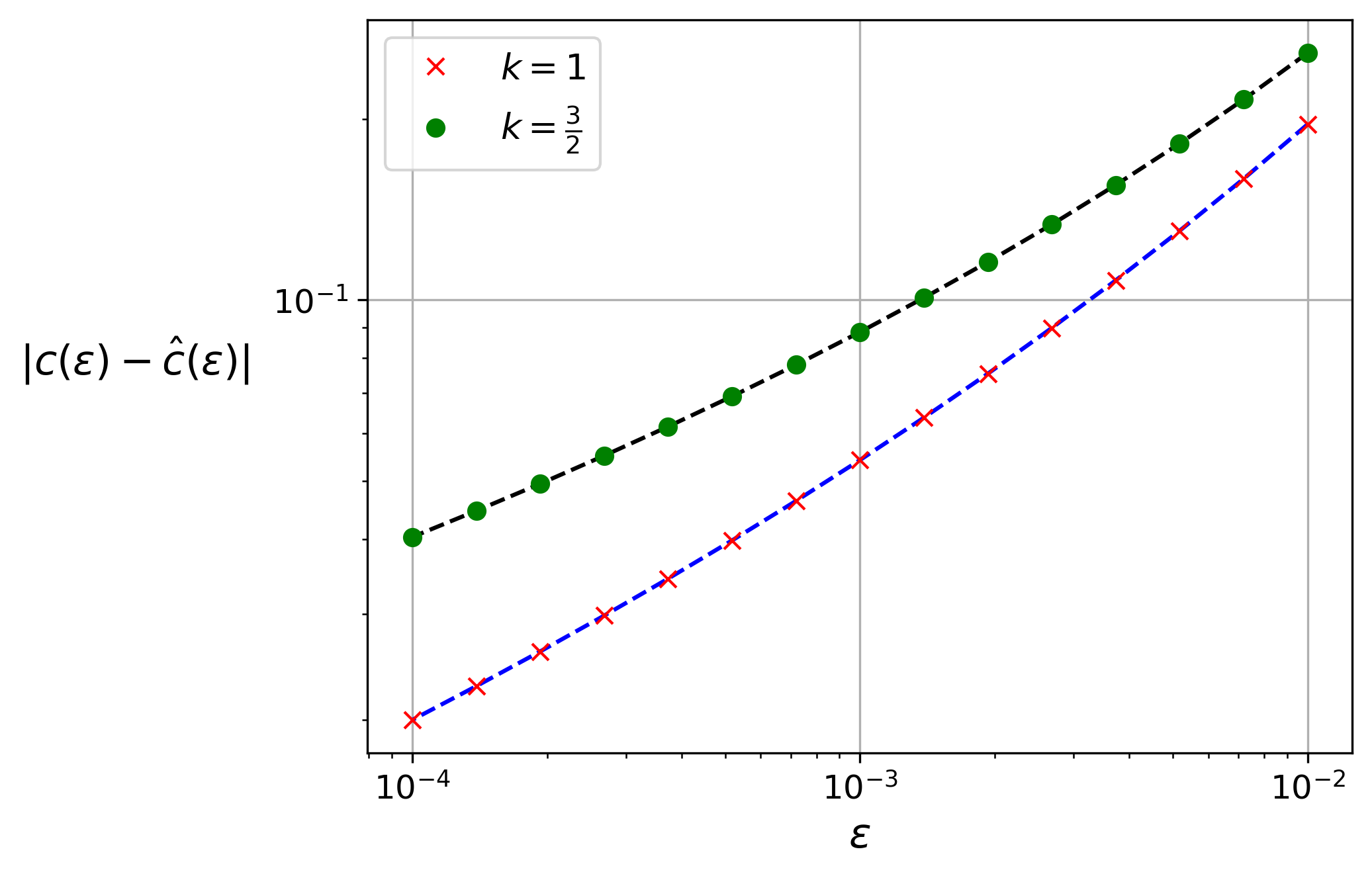}
    \includegraphics[scale=0.75]{Images/BurgerFisher-k=4error.png}
    \caption{Error of the approximation of $c(\varepsilon)$ by $\hat{c}(\varepsilon)$ for $k\in\{1, \frac{3}{2}, 2\sqrt{2}, 4\}$ and $\varepsilon\in[10^{-4},10^{-2}]$.}
    \label{fig:k=4error}
\end{figure}
Figure~\ref{fig:k=4error} suggests that the next-order correction to $c(\varepsilon)$ will be of the order \textcolor{black}{$\mathcal{O}[(\ln{\varepsilon})^{-3}]$ for $k=1$ and $k=\frac{3}{2}$, whereas it will be $\mathcal{O}(\varepsilon)$ for $k=2\sqrt{2}$ and $\mathcal{O}(\varepsilon^{3/2})$ for $k=4$.}

\section{Discussion}
\label{discussionsection}
In this article, we have proven the existence of ``critical" travelling front solutions to the Burgers-FKPP equation with a Heaviside cut-off multiplying both the reaction kinetics and the advection term, recall Equation~\eqref{PDEcut1}. Moreover, we have rigorously derived the leading order $\varepsilon$-asymptotics of the unique front propagation speed $c(\varepsilon)$. \textcolor{black}{To the best of our knowledge, the effects of a cut-off on advection-reaction-diffusion equations of the type in \eqref{general 2} have not been studied before.}

For $k\leq 2$, the front is pulled and behaves as the pulled fronts with a cut-off considered, e.g., in~\cite{Dumortier_2007, Popović_2011}, with the correction to the front propagation speed being negative and of the order $\mathcal{O}[(\ln{\varepsilon})^{-2}]$. When $k>2$, the front is pushed, and the correction to the speed of propagation is also negative, and proportional to a fractional power of $\varepsilon$, again in analogy to the pushed fronts in reaction-diffusion equations with a cut-off analysed in~\cite{Popović_2011}.
\textcolor{black}{While the proof of Theorem~\ref{Myfirstthm} in the pulled propagation regime where $k\leq 2$ closely follows the proof of \cite[Theorem~1]{Dumortier_2007}, we have included it for completeness. The analysis of the pushed regime, with $k>2$, is significantly more involved algebraically and relies on a modification of the approach developed in \cite{popdeg, Popović_2011, POPOVIC20121976}. Our main analytical contribution in this article can be found in Section~\ref{persistencesection}, where we adapt techniques from both \cite{Dumortier_2007} and \cite{popdeg, Popović_2011, POPOVIC20121976} to derive the asymptotics in \eqref{dc in thm}.}

\textcolor{black}{It is important to emphasise that the blow-up technique is applied in the present context to remedy a discontinuity in the governing equations, rather than a loss of hyperbolicity, as is typically the case in applications of blow-up \cite{krupa33extending,krupa2001blowup}. Given that the regularisation of piecewise smooth systems via the alternative methodology developed in \cite{zbMATH01531778} typically results in a singular perturbation problem, it may be feasible to adapt that well-established methodology to our setting; see \cite{zbMATH06560497} for a specific application.}

It may be possible to calculate higher-order terms in $\varepsilon$ in the expansion of $c(\varepsilon)$ in the pushed regime; however, to do so, one must solve for $\frac{\partial^j V}{\partial c^j}(U,c(0))$ ($j\geq 2$) via the procedure outlined in Lemma~\ref{variational_lemma}.
We note that the procedure will fail for general pulled fronts with $k<2$, as the front is not explicitly known in those cases, preventing us from calculating $W^{\rm in}$ to a sufficiently high order to determine higher-order terms in $c(\varepsilon)$.
However, at the boundary between the pushed and pulled regimes, when $k=2$, the front is known explicitly, as is $\frac{\partial V}{\partial c}(U,c(0))$, see Lemma~\ref{variational_lemma}. Hence, one could approximate $W^{\rm in}$ to a sufficiently high order, which may make it possible to determine the next term in the expansion of $c(\varepsilon)$ when $k=2$.

By retracing the proof of Theorem~\ref{Myfirstthm}, one can show that the leading-order correction to the front speed is independent of whether or not the advection term in Equation~\eqref{PDEcut1} is multiplied with the cut-off function $H(u-\varepsilon)$. That observation is supported by the motivating example of the Burgers equation with a cut-off, Equation~\eqref{simpleburger with cut}, where the correction to the front speed is given by $\Delta c(\varepsilon)=\frac{k}{2}\varepsilon^2$ to leading order. As both $(\ln{\varepsilon})^{-2}$ and $\varepsilon^{1-4/k^2}$ are of lower order compared to $\varepsilon^2$, it is to be expected that a cut-off in the advection term will not affect the leading-order asymptotics of $\Delta c(\varepsilon)$. 
Since the requisite argument is very similar to the proof of Theorem~\ref{Myfirstthm}, we outline it in Appendix~\ref{appendix}. 

\textcolor{black}{In Theorem~\ref{Myfirstthm}, we have restricted to a Heaviside cut-off function $H(u-\varepsilon)$ in Equation~\eqref{PDEcut1}; that restriction appears reasonable, as the Heaviside cut-off cancels the reaction kinetics and the advection term exactly when no particles are present in the underlying $N$-particle system. One can instead introduce a general cut-off function $\psi(u, \varepsilon)$ in \eqref{PDEcut1}} which satisfies $\psi(u,\varepsilon)\equiv 1$ when $u>\varepsilon$ and $\psi(u,\varepsilon)<1$ for $u<\varepsilon$. Equation~\eqref{general} with Fisher reaction kinetics and a general cut-off has been considered in \cite{Dumortier_2007}, while a linear cut-off function was studied explicitly in \cite{POPOVIC20121976}. In the context of Equation~\eqref{PDEcut1}, one can show that to leading order, $\Delta c=\frac{\pi^2}{\ln(\varepsilon)^2}$ in the pulled propagation regime \textcolor{black}{for a wide range of cut-off functions which includes the Heaviside cut-off \cite{Dumortier_2007}}. Hence, the leading-order asymptotics of $\Delta c$ is then universal, as was also the case in \cite{Dumortier_2007}. In the pushed regime, one again obtains $\Delta c=\mathcal{O}(\varepsilon^{1-4/k^2})$; however, it is not possible to calculate explicitly the corresponding leading-order coefficient for a general cut-off function $\psi$, since explicit knowledge of the entry point in chart $K_2$ is required. \textcolor{black}{In particular, that coefficient will be cut-off-dependent then, in contrast to the pulled propagation regime, as is also the case in \cite{Dumortier_2007, Popović_2011, PhysRevE.72.056113}}.

Finally, we note that Theorem~\ref{Myfirstthm} can be extended to advection-reaction-diffusion equations with a more general advection term,
\begin{equation}
    \frac{\partial u}{\partial t}+ku^n\frac{\partial u}{\partial x} H(u-\varepsilon)=\frac{\partial^2 u}{\partial x^2}+u(1-u^n)H(u-\varepsilon),
    \label{PDEcut-2}
\end{equation}
where $n\geq 2$ is integer-valued. For $\varepsilon=0$, Equation~\eqref{PDEcut-2} admits a pulled front for $k\leq n+1$ and $c\geq 2$, whereas for $k>n+1,$ there exists a pushed front solution for $c\geq\frac{k}{n+1}+\frac{n+1}{k}$, which can be shown in analogy to the proof of Theorem~\ref{basic thm} via the approach outlined in \cite{2021ZaMP...72..163M}. For $c=\frac{k}{n+1}+\frac{n+1}{k},$ the sought-after front corresponds, in a co-moving frame, to the heteroclinic orbit $V(U)=-\frac{k}{n+1}U(1-U^n)$. Due to the increased algebraic complexity, we leave the study of the impact of a cut-off on Equation~\eqref{PDEcut-2} for the future.
However, we note that, as the front is explicitly known in the pushed regime, it is likely that the leading-order correction to the propagation speed $c(\varepsilon)$ can be calculated explicitly for all $k>0$.

\appendix
\section{Proof of Theorem~\ref{Myfirstthm} without cut-off in advection} 
\label{appendix}

\begin{comment}

\begin{lem}(Gr\o nwall's Lemma)
    Let $\mathcal{U}$ be an open set in $\mathbb{R},$ let $f,g:[0,T]\times U\to \mathbb{R}$ be continuous, and let $x(t)$, $y(t)$ be solutions to the initial value problems  $x'(t)=f(t,x(t))$ with $x(0)=x_0$ and $y'(t)=g(t,y(t))$ with $y(0)=y_0$, respectively.
    Assume that there exists a $D\geq 0$ such that 
    \begin{equation}
        |g(t, y_1)-g(t, y_2)|\leq D|y_1-y_2|;
        \label{gron1}
    \end{equation}
    moreover, let $\varphi(t):[0, T]\to \mathbb{R}^+$ be a continuous function with 
    \begin{equation}
        |f(t, x(t))-g(t, x(t))|\leq \varphi(t).
        \label{gron2}
    \end{equation}
    Then, there holds 
    \begin{equation}
        |x(t)-y(t)|\leq e^{D t}|x_0-y_0|+e^{D t}\int_{0}^t e^{-D \tau}\phi(\tau)d\tau
        \label{gron3}
    \end{equation}
    for $t\in[0,T]$.
    \label{gronwelllem}
\end{lem}
\end{comment}

Here, we briefly show that Theorem~\ref{Myfirstthm} remains equally valid for the advection-reaction-diffusion-equation 
\begin{equation}
    \frac{\partial u}{\partial t}+ku\frac{\partial u}{\partial x}=\frac{\partial^2 u}{\partial x^2}+u(1-u)H(u-\varepsilon),
    \label{Burger-without-Acut}
\end{equation}
in which the advection term $ku\frac{\partial u}{\partial x}$ is not affected by the cut-off. The corresponding first-order system then reads
\begin{equation}
    \begin{aligned}
        U'&=V,\\
        V'&=-\gamma V+kUV-U(1-U)H(U-\varepsilon),\\
        \varepsilon'&=0,
        \label{firstordersystem2}
    \end{aligned}
\end{equation}
where the travelling wave variable is now defined by $\xi=x-\gamma t$, with $\gamma$ the front propagation speed. 
The analysis of \eqref{firstordersystem2} again relies on the blow-up transformation in \eqref{burger-blowup1}. We observe that \eqref{PDEcut1} and \eqref{Burger-without-Acut} are identical for $u>\varepsilon$; therefore, it suffices to study \eqref{firstordersystem2} in the rescaling chart $K_2$ only, which is again defined by \eqref{K_2_blowup}:
\begin{equation}
    \begin{aligned}
        u_2'&=v_2, \\
        v_2'&=-\gamma v_2+kr_2u_2v_2,\\
        r_2'&=0.
        \label{noAcutfirstsystem}
    \end{aligned}
\end{equation}
By taking $\varepsilon=r_2\to 0^+$ and solving for $v_2(u_2)$, we obtain \eqref{Gamma2}, i.e., the singular orbit $\Gamma_2$ in $K_2$ is given as before. By the above, we can conclude that Proposition~\ref{propconst} holds for \eqref{Burger-without-Acut}. 

It remains to consider the persistence of $\Gamma$ for \eqref{Burger-without-Acut}.
We can solve \eqref{noAcutfirstsystem} explicitly for general $\gamma$ and $\varepsilon(=r_2)>0$ to obtain 
\begin{equation}
   W_2^{\rm s}(\ell_2^+):\ v_2(u_2)=-\gamma u_2+\frac{k}{2}r_2 u_2^2.
   \label{stablemanifold}
\end{equation}
The point of intersection of $W_2^{\rm s}(\ell_2^+)$ with $\Sigma_2^{\rm in}$ is given by $P_2^{\rm in}=(1, v_2^{\rm in}, \varepsilon)$, where $v_2^{\rm in}=-\gamma+\frac{k}{2}\varepsilon.$ From the definition of \eqref{burger-blowup1}, it follows that $V^{\rm in}=v_2^{\rm in}\varepsilon =-\gamma\varepsilon+\frac{k}{2}\varepsilon^2$.
One can now prove a similar result to Proposition~\ref{persistenceprop} for \eqref{firstordersystem2}, where $c(0)=c_{\rm crit}$ is again defined as in Theorem~\ref{basic thm}.

\begin{prop}
    For $\varepsilon\in(0, \varepsilon_0)$, with $\varepsilon_0$ sufficiently small, $k>0$, and $\gamma$ close to
    $c(0)$, there exists a critical heteroclinic connection between $Q^-$ and $Q^+$ in Equation~\eqref{firstordersystem2} for a unique speed $\gamma(\varepsilon)$ which depends on $k$. Furthermore, there holds $\gamma(\varepsilon)\leq c(0)$.
    \label{persistenceprop2}
\end{prop}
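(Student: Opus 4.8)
The plan is to follow the proof of Proposition~\ref{persistenceprop} almost verbatim, the only structural difference being that in the inner region the advection term is no longer switched off, so that the inner stable manifold is the parabola \eqref{stablemanifold} rather than a straight line. First I would record, exactly as in the main text, that for $\varepsilon=r_2>0$ fixed the stable manifold $W_2^{\rm s}(\ell_2^+)$ meets $\Sigma_2^{\rm in}$ at $P_2^{\rm in}=(1,v_2^{\rm in},\varepsilon)$ with $v_2^{\rm in}=-\gamma+\frac{k}{2}\varepsilon$, hence $V^{\rm in}=\varepsilon v_2^{\rm in}=-\varepsilon\gamma+\frac{k}{2}\varepsilon^2$ and $\frac{\partial V^{\rm in}}{\partial\gamma}=-\varepsilon<0$.

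Second, I would treat the outer region, which is governed by the unmodified Burgers--FKPP system \eqref{firstorder} with $c$ replaced by $\gamma$ and is therefore identical to the outer problem analysed in the proof of Proposition~\ref{persistenceprop}. The intersection of $W^{\rm u}(Q^-)$ with $\{U=\varepsilon\}$ can again be written as the graph of an analytic function $V^{\rm out}(\gamma,\varepsilon)$ with $\frac{\partial V^{\rm out}}{\partial\gamma}>0$, and a standard phase-plane estimate gives that $V^{\rm out}$ is $\mathcal{O}(1)$ and strictly negative for $\gamma\lesssim c(0)$; together with $V^{\rm in}=\mathcal{O}(\varepsilon)$ this yields $V^{\rm in}>V^{\rm out}$ for such $\gamma$.

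Third, and this is the only computation that needs to be redone, I would check the reversed inequality at the critical speed $\gamma=c(0)$. For $k<2$, where $c(0)=2$, Proposition~\ref{trappingregionprop} bounds the orbit between $\{V=0\}$ and $\{V=U(U-1)\}$, so $V^{\rm out}(c(0),\varepsilon)\geq\varepsilon(\varepsilon-1)$; since $V^{\rm in}=-2\varepsilon+\frac{k}{2}\varepsilon^2<\varepsilon^2-\varepsilon$ for $\varepsilon$ small, we get $V^{\rm in}<V^{\rm out}$. For $k\geq2$ the singular heteroclinic is $V(U)=-\frac{k}{2}U(1-U)$, so $V^{\rm out}(c(0),\varepsilon)=-\frac{k}{2}\varepsilon(1-\varepsilon)$, whereas $V^{\rm in}=-\big(\frac{k}{2}+\frac{2}{k}\big)\varepsilon+\frac{k}{2}\varepsilon^2$; their difference is $V^{\rm in}-V^{\rm out}=-\frac{2}{k}\varepsilon<0$, so again $V^{\rm in}<V^{\rm out}$. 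In particular the extra $\frac{k}{2}\varepsilon^2$ term in $V^{\rm in}$ coming from the un-cut advection is harmless: it is of higher order and, for $k\geq 2$, cancels against the corresponding term in $V^{\rm out}$.

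Finally I would conclude with the implicit function theorem exactly as in Proposition~\ref{persistenceprop}: since $\gamma\mapsto V^{\rm out}(\gamma,\varepsilon)-V^{\rm in}(\gamma,\varepsilon)$ has strictly positive derivative $\frac{\partial V^{\rm out}}{\partial\gamma}+\varepsilon>0$, is negative for $\gamma$ slightly below $c(0)$ and positive at $\gamma=c(0)$, there is a unique $\gamma(\varepsilon)\leq c(0)$ at which $W^{\rm u}(Q^-)$ and $W_2^{\rm s}(\ell_2^+)$ meet in $\{U=\varepsilon\}$; this intersection is precisely the sought critical heteroclinic connection between $Q^-$ and $Q^+$ for \eqref{firstordersystem2}. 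I do not anticipate a genuine obstacle here; the only point requiring care is the bookkeeping of the $\mathcal{O}(\varepsilon^2)$ advection correction in $V^{\rm in}$ and the verification that it does not spoil the sign comparison at $\gamma=c(0)$ — which the computation in the third paragraph confirms.
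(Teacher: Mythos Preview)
Your proposal is correct and follows exactly the approach indicated in the paper, which simply states that the proof of Proposition~\ref{persistenceprop} carries over verbatim once $V^{\rm in}=-\varepsilon c$ is replaced by $V^{\rm in}=-\varepsilon\gamma+\frac{k}{2}\varepsilon^2$. You have in fact supplied more detail than the paper does, explicitly verifying the sign comparisons at $\gamma=c(0)$ with the modified $V^{\rm in}$; these computations are all correct.
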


The proof is similar to that of Proposition~\ref{persistenceprop}, the only difference being that $V^{\rm in}=-c\varepsilon$ is replaced by $V^{\rm in}=v_2^{\rm in}\varepsilon=-\gamma\varepsilon+\frac{k}{2}\varepsilon^2$. In spite of that difference, the argument from the proof of Proposition~\ref{persistenceprop} carries over verbatim.

The remainder of the analysis in Section~\ref{persistencesection} equally translates to Equation~\eqref{Burger-without-Acut}. The sole difference concerns the point $P_1^{\rm out}=(\varepsilon, W^{\rm out}, 1)$, where $W^{\rm out}$ is derived from $v_2^{\rm in}$. We have the following result on the leading-order asymptotics of $W^{\rm out}$.
\begin{lem}
    For $k>2$ and $\varepsilon$ and $\Delta \gamma$ sufficiently small, the point $P_1^{\rm out}=(\varepsilon,W^{\rm out},1)$ satisfies \begin{equation}
        W^{\rm out}=-\frac{2}{k}+\mathcal{O}(\Delta \gamma,\varepsilon^2),
        \label{W^{out}new}
    \end{equation} 
    where $\gamma(\varepsilon)=c(0)-\Delta \gamma.$
    \label{lem new W^{in}}
\end{lem}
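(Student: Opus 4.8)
The plan is to retrace the derivation of $W^{\rm out}$ from Lemma~\ref{in/outlemma}, the only change being that in chart $K_2$ the relevant stable manifold is now the parabola \eqref{stablemanifold} rather than a straight line, since the advection term $kr_2u_2v_2$ in \eqref{noAcutfirstsystem} is not switched off in the inner region $u_2\le 1$. Concretely, I would proceed in three steps: (i) locate the intersection of $W_2^{\rm s}(\ell_2^+)$ with $\Sigma_2^{\rm in}$; (ii) transport that point to chart $K_1$ and shift $\hat{P}_1$ to the origin; and (iii) apply the normal-form transformation of Lemma~\ref{normal_form_1} and read off $W^{\rm out}$.

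For step (i), recall from \eqref{stablemanifold} that $v_2^{\rm in}=v_2(1)=-\gamma+\frac{k}{2}\varepsilon$, so $P_2^{\rm in}=(1,-\gamma+\frac{k}{2}\varepsilon,\varepsilon)$. For step (ii), the blow-up identity $r_1v_1=r_2v_2$ together with $r_1=\varepsilon$ on $\Sigma_1^{\rm out}$ yields $v_1^{\rm out}=v_2^{\rm in}=-\gamma+\frac{k}{2}\varepsilon$; after the shift $V_1=v_1+\frac{k}{2}$ and the substitution $\gamma=c(0)-\Delta\gamma=\frac{k}{2}+\frac{2}{k}-\Delta\gamma$ (recall $c(0)=c_{\rm crit}$ from Theorem~\ref{basic thm}), this becomes $V_1^{\rm out}=-\frac{2}{k}+\Delta\gamma+\frac{k}{2}\varepsilon$. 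For step (iii) I would invoke Lemma~\ref{normal_form_1} verbatim --- it applies unchanged, since chart $K_1$ covers the outer region $U>\varepsilon$ where \eqref{PDEcut1} and \eqref{Burger-without-Acut} coincide, and since $\Delta\gamma$ is small by Proposition~\ref{persistenceprop2} --- so that $W=V_1-\frac{k}{2}r_1+\mathcal{O}(r_1V_1)$. Evaluating on $\Sigma_1^{\rm out}$, where $r_1=\varepsilon$, the contribution $\frac{k}{2}\varepsilon$ inherited from the parabola \eqref{stablemanifold} cancels exactly against the term $-\frac{k}{2}r_1=-\frac{k}{2}\varepsilon$ produced by the normal-form transformation; the remaining $\mathcal{O}(r_1V_1)$-terms contribute at order $\mathcal{O}(\varepsilon^2)$, exactly as in Lemma~\ref{in/outlemma}. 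Hence $W^{\rm out}=-\frac{2}{k}+\Delta\gamma+\mathcal{O}(\varepsilon^2)$, and absorbing the non-negative quantity $\Delta\gamma$ into the error term gives $W^{\rm out}=-\frac{2}{k}+\mathcal{O}(\Delta\gamma,\varepsilon^2)$, as claimed.

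The computation is entirely parallel to that of Lemma~\ref{in/outlemma}, so I do not anticipate any serious obstacle. The one point that deserves care --- and the only genuinely new feature compared with the Heaviside-in-advection case --- is the cancellation in step (iii): one must check that the coefficient of $u_2^2$ in the $K_2$ stable manifold \eqref{stablemanifold} is precisely $\frac{k}{2}r_2$, so that after setting $r_2=\varepsilon$ and $u_2=1$ it matches the coefficient of $r_1$ in the first substitution $V_1=\frac{k}{2}r_1+Z$ of Lemma~\ref{normal_form_1}. This match is structurally unsurprising, as both coefficients trace back to the single advection coefficient $k$ in \eqref{Burger-FKPP1}; it is precisely this cancellation that removes the $-\frac{k}{2}\varepsilon$ term present in Lemma~\ref{in/outlemma}, and, together with the fact that $\mathcal{O}(\varepsilon^2)$ is of strictly lower order than the eventual $\mathcal{O}(\varepsilon^{1-4/k^2})$ size of $\Delta\gamma$, it explains why the leading-order correction to the front speed is unaffected by dropping the cut-off from the advection term.
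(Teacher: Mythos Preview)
Your proposal is correct and follows precisely the approach indicated in the paper, which gives only a one-line proof pointing to the expression for $V^{\rm in}$ derived from \eqref{stablemanifold} and the normal-form transformation of Lemma~\ref{normal_form_1}. Your three-step breakdown faithfully expands that sketch, and your explicit identification of the cancellation between the $+\frac{k}{2}\varepsilon$ contribution from the $K_2$ parabola and the $-\frac{k}{2}r_1$ term of the normal form is exactly the mechanism that distinguishes this case from Lemma~\ref{in/outlemma} and removes the linear $\varepsilon$-term from $W^{\rm out}$.
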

\begin{proof}
   Equation~\eqref{W^{out}new} follows from the definition of $V^{\rm in}$ for \eqref{firstordersystem2} and the sequence of transformations defined in Lemma~\ref{normal_form_1}.
\end{proof} 

In Section~\ref{dc pulled}, i.e., in the pulled regime, we found that the leading-order asymptotics of $\Delta c$ is independent of $W^{\rm in}$ and $W^{\rm out}$; therefore, we can conclude that Theorem~\ref{Myfirstthm} holds for Equation~\eqref{Burger-without-Acut} when $k\leq 2$, i.e., that $\Delta \gamma =\Delta c$ to leading order.

As the asymptotics of $W^{\rm out}$ for \eqref{Burger-without-Acut} in Lemma~\ref{lem new W^{in}} differs from that in Lemma~\ref{in/outlemma} at $\mathcal{O}(\varepsilon)$, and as only the constant term $-\frac{2}{k}$ is required to derive the leading-order asymptotics of $\Delta c$, we can conclude that Theorem~\ref{Myfirstthm} holds for \eqref{Burger-without-Acut} in the pushed front propagation regime where $k>2$.

In Figure~\ref{fig:k=4appendixplot}, we compare the propagation speeds $c(\varepsilon)$ and $\gamma(\varepsilon)$ for \eqref{PDEcut1} and \eqref{PDEcut-2}, respectively. We find that $|c(\varepsilon)-\gamma(\varepsilon)|$ is of higher order than $|c(\varepsilon)-\hat{c}(\varepsilon)|$, which we plot in red \textcolor{black}{for comparison}. For example, for $k=4$, the propagation speeds $c(\varepsilon)$ and $\gamma(\varepsilon)$ differ approximately at order $\mathcal{O}(\varepsilon^{9/5})$, whereas the difference between the propagation speed $c(\varepsilon)$ for \eqref{PDEcut1} and its leading-order approximation $\hat{c}(\varepsilon),$ given in Theorem~\ref{Myfirstthm}, is of the order $\mathcal{O}(\varepsilon^{3/2})$.

\begin{figure}[H]
    \centering
    \includegraphics[scale = 0.75]{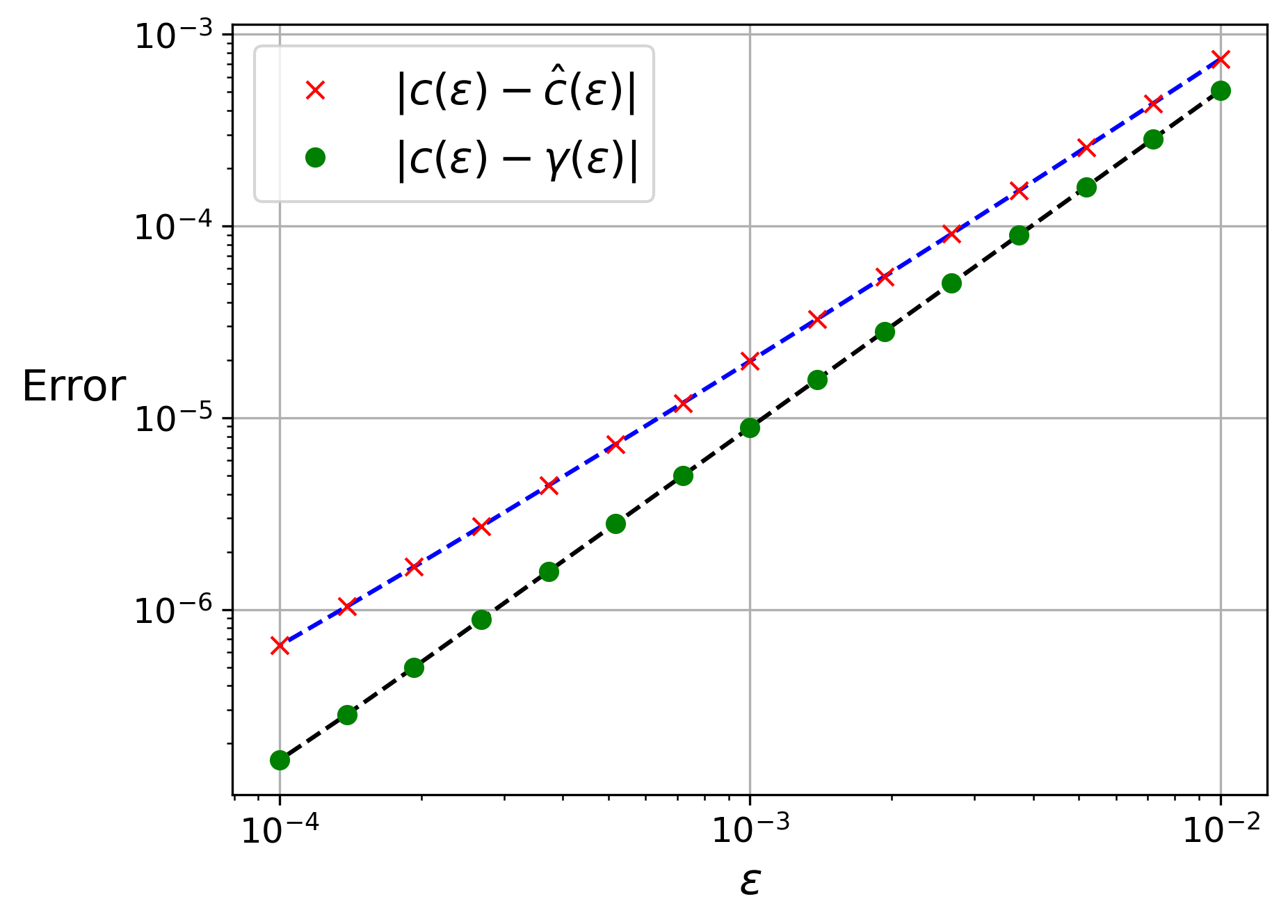}
    \caption{Numerical difference between $c(\varepsilon)$ and $\gamma(\varepsilon)$ (green) for $k=4$ and $\varepsilon\in[10^{-4},10^{-2}]$; the error $|c(\varepsilon)-\hat{c}(\varepsilon)|$ is plotted for comparison (red).}
    \label{fig:k=4appendixplot}
\end{figure}

\section*{Acknowledgements}
ZS was supported by the EPSRC Centre for Doctoral Training
in Mathematical Modelling, Analysis and Computation (MAC-MIGS) funded
by the UK Engineering and Physical Sciences Research Council (EPSRC)
grant EP/S023291/1, Heriot-Watt University, and the University of Edinburgh.

\addcontentsline{toc}{section}{References}
%\nocite{*}
\printbibliography

@ARTICLE{2021ZaMP...72..163M,
       author = {{Ma}, M. and {Ou}, C.},
        title = "{The minimal wave speed of a general reaction-diffusion equation with nonlinear advection}",
      journal = {Zeitschrift Angewandte Mathematik und Physik},
     keywords = {Traveling waves, Linear and nonlinear selection, Minimal wave speed, Existence and uniqueness, Nonlinear advection},
         year = 2021,
        month = aug,
       volume = {72},
       number = {4},
          eid = {163},
        pages = {163},
          doi = {10.1007/s00033-021-01588-6},
       adsurl = {https://ui.adsabs.harvard.edu/abs/2021ZaMP...72..163M},
      adsnote = {Provided by the SAO/NASA Astrophysics Data System}
}

@article{PhysRevE.56.2597,
  title = {Shift in the velocity of a front due to a cutoff},
  author = {Brunet, E. and Derrida, B.},
  journal = {Phys. Rev. E},
  volume = {56},
  issue = {3},
  pages = {2597--2604},
  numpages = {0},
  year = {1997},
  publisher = {American Physical Society},
  doi = {10.1103/PhysRevE.56.2597},
  url = {https://link.aps.org/doi/10.1103/PhysRevE.56.2597}
}

@article{Dumortier_2007,
doi = {10.1088/0951-7715/20/4/004},
url = {https://dx.doi.org/10.1088/0951-7715/20/4/004},
year = {2007},
publisher = {},
volume = {20},
number = {4},
pages = {855},
author = {F. Dumortier and N. Popovi\'c and T.J. Kaper},
title = {The critical wave speed for the Fisher–Kolmogorov–Petrowskii–Pis-\\counov equation with cut-off},
journal = {Nonlinearity},

}

@article{Dence2013,
    author = "T. Dence",
    title = "A Brief Look into the Lambert W Function",
    journal = " Applied Mathematics, Vol. 4 No. 6,  pp. 887-892",
    year = {2013}
}

@article{POPOVIC20121976,
title = {A geometric analysis of front propagation in an integrable Nagumo equation with a linear cut-off},
journal = {Physica D: Nonlinear Phenomena},
volume = {241},
number = {22},
pages = {1976-1984},
year = {2012},
note = {Dynamics and Bifurcations of Nonsmooth Systems},
issn = {0167-2789},
doi = {https://doi.org/10.1016/j.physd.2011.05.007},
url = {https://www.sciencedirect.com/science/article/pii/S0167278911001163},
author = {N. Popovi\'c},
keywords = {Reaction–diffusion equations, Front propagation, Cut-offs, Geometric desingularization, Maxwell point},

}

@article{Fisher,
    author = "R.A. Fisher",
    title = "The wave of advance of advantageous genes",
    journal = "  Annals of Eugenics, v. 7: 355-369 ",
    year = {1937}
}

@article{krupa33extending,
  title={Extending geometric singular perturbation theory to nonhyperbolic points---fold and canard points in two dimensions},
  author={Krupa, M. and Szmolyan, P.},
  journal={SIAM J. Math. Analysis},
  volume={33},
  number={2},
  pages={286-314},
  year={2001}
}

@book{dumortier1996canard,
  title={Canard Cycles and Center Manifolds},
  author={Dumortier, F. and Roussarie, R.H.},
  isbn={9781470401627},
  lccn={96002234},
  series={Memoirs of the American Mathematical Society},
  url={https://books.google.co.uk/books?id=BOiTAQAACAAJ},
  year={1996},
  publisher={American Mathematical Society}
}

@article{krupa2001blowup,
  title={Extending slow manifolds near transcritical and pitchfork singularities},
  author={Krupa, M. and Szmolyan, P.},
  journal={Nonlinearity},
  volume={14},
  number={6},
  pages={1473-1491},
  year={2001},
  publisher={IOP Publishing}
}

@article{popdeg,
    doi = {https://doi.org/10.1007/s00033-011-0115-6},
    author = {Popovi\'c, N.},
    title = {A geometric analysis of front propagation in a family of degenerate reaction-diffusion equations with cutoff},
    journal = {Z. Angew. Math. Phys},
    volume = {62},
    number = {3},
    pages = {405-437},
    year = {2011} 
}

@article{Popović_2011,
doi = {10.1088/1742-6596/268/1/012023},
url = {https://dx.doi.org/10.1088/1742-6596/268/1/012023},
year = {2011},
publisher = {},
volume = {268},
number = {1},
pages = {012023},
author = {N. Popovi\'c},
title = {A geometric classification of traveling front propagation in the Nagumo equation with cut-off},
journal = {Journal of Physics: Conference Series},
abstract = {An important category of solutions to reaction-diffusion systems of partial differential equations is given by traveling fronts, which provide a monotonic connection between rest states and maintain a fixed profile when considered in a co-moving frame. Reaction-diffusion equations are frequently employed in the mean-field (continuum) approximation of discrete (many-particle) models; however, the quality of this approximation deteriorates when the number of particles is not sufficiently large. The (stochastic) effects of this discreteness have been modeled via the introduction of (deterministic) 'cut-offs' that effectively deactivate the reaction terms at points where the particle concentration is below a certain threshold. In this article, we present an overview of the effects of such a cut-off on the front propagation dynamics in a prototypical reaction-diffusion system, the classical Nagumo equation. Our analysis is based on the method of geometric desingularization ('blow-up'), in combination with dynamical systems techniques such as invariant manifolds and normal forms. Using these techniques, we categorize front propagation in the cut-off Nagumo equation in dependence of a control parameter, and we classify the corresponding propagation regimes ('pulled,' 'pushed,' and 'bistable') in terms of the bifurcation structure of a projectivized system of equations that is obtained from the original traveling front problem, after blow-up. In particular, our approach allows us to determine rigorously the asymptotics (in the cut-off parameter) of the correction to the front propagation speed in the Nagumo equation that is due to a cut-off. Moreover, it explains the structure of that asymptotics (logarithmic, superlinear, or sublinear) in dependence of the front propagation regime. Finally, it enables us to calculate the corresponding leading-order coefficients in the resulting expansions in closed form.}
}

@book{Murray2002,
  added-at = {2011-01-13T13:26:12.000+0100},
  address = {New York},
  author = {Murray, J.D.},
  biburl = {https://www.bibsonomy.org/bibtex/24ce4d3b638dfb74047831975a2c9a4b3/rincedd},
  doi = {10.1007/b98868},
  edition = 3,
  file = {:murray j.d. mathematical biology.. i. an introduction (3ed., springer, 2002)(576s).pdf:PDF},
  interhash = {b84ca2a0b50b723b2a740dad70395b31},
  intrahash = {4ce4d3b638dfb74047831975a2c9a4b3},
  keywords = {mathematical-biology epidemics epidemiology},
  publisher = {Springer},
  series = {Interdisciplinary Applied Mathematics},
  timestamp = {2011-01-13T13:26:12.000+0100},
  title = {Mathematical Biology I. An Introduction},
  volume = 17,
  year = 2002
}

@article{0df3afe5-6a08-31cf-8efe-d9042a9b599b,
 ISSN = {00029947, 10886850},
 URL = {http://www.jstor.org/stable/2155228},
 abstract = {We investigate the local conjugacy, at a partially hyperbolic fixed point, of a diffeomorphism (vector field) to its normally linear part in the presence of constraints, where the change of variables also must satisfy the constraints. The main result is applied to vector fields respecting a singular foliation, encountered, by F. Dumortier and R. Roussarie, in the desingularization of families of vector fields.},
 author = {P. Bonckaert},
 journal = {Transactions of the American Mathematical Society},
 number = {3},
 pages = {997--1011},
 publisher = {American Mathematical Society},
 title = {Partially Hyperbolic Fixed Points With Constraints},
 urldate = {2024-07-18},
 volume = {348},
 year = {1996}
}

@book{abramowitz+stegun,
  added-at = {2008-06-25T06:25:58.000+0200},
  address = {New York},
  author = {Abramowitz, M. and Stegun, I.A.},
  biburl = {https://www.bibsonomy.org/bibtex/223ec744709b3a776a1af0a3fd65cd09f/a_olympia},
  description = {BibTeX - Wikipedia, the free encyclopedia},
  edition = {ninth Dover printing, tenth GPO printing},
  interhash = {d4914a420f489f7c5129ed01ec3cf80c},
  intrahash = {23ec744709b3a776a1af0a3fd65cd09f},
  keywords = {Handbook},
  publisher = {Dover},
  timestamp = {2008-06-25T06:25:58.000+0200},
  title = {Handbook of Mathematical Functions with Formulas, Graphs, and Mathematical Tables},
  year = 1964
}

@article{BREUER1994259,
title = {Fluctuation effects on wave propagation in a reaction-diffusion process},
journal = {Physica D: Nonlinear Phenomena},
volume = {73},
number = {3},
pages = {259-273},
year = {1994},
issn = {0167-2789},
doi = {https://doi.org/10.1016/0167-2789(94)90161-9},
url = {https://www.sciencedirect.com/science/article/pii/0167278994901619},
author = {H.P. Breuer and W. Huber and F. Petruccione},
abstract = {The reaction-diffusion process corresponding to the Fisher-Kolmogorov equation is studied by means of a discrete multivariate master equation. For travelling wave fronts the stability criterion necessary for the applicability of a system-size expansion is shown to be violated due to the existence of a zero mode of the first variational equation. This zero mode is connected to the translational invariance of the system. Performing stochastic simulations of the master equation in a wide range of parameters it is demonstrated that for finite size of the system (up to about 107 particles in the frontal region) a rather large fluctuation effect on the wave propagation speed results: in general, the asymptotic wave speed lies below the stable, minimal speed which is given by a theorem of Kolmogorov for the macroscopic equation. The wave front position exhibits a diffusion-type behaviour associated with translative fluctuations along the propagation direction.}
}

@article{complaticegassmodel,
    author = {Kerstein, A.R.},
    title = {Computational study of propagating fronts in a Lattice-gas model},
    journal =  {Journal of Statistical Physics},
    year = {1986},
    doi = {https://doi.org/10.1007/BF01020582}
}

@article{PhysRevE.58.107,
  title = {Front propagation: Precursors, cutoffs, and structural stability},
  author = {Kessler, D.A. and Ner, Z. and Sander, L.M.},
  journal = {Phys. Rev. E},
  volume = {58},
  issue = {1},
  pages = {107--114},
  numpages = {0},
  year = {1998},
  publisher = {American Physical Society},
  doi = {10.1103/PhysRevE.58.107},
  url = {https://link.aps.org/doi/10.1103/PhysRevE.58.107}
}

@book{hirsch2006invariant,
  title={Invariant Manifolds},
  author={Hirsch, M.W. and Pugh, C.C. and Shub, M.},
  isbn={9783540373827},
  series={Lecture Notes in Mathematics},
  url={https://books.google.co.uk/books?id=Py98CwAAQBAJ},
  year={2006},
  publisher={Springer Berlin Heidelberg}
}

@article{Musha_1978,
doi = {10.1143/JJAP.17.811},
url = {https://dx.doi.org/10.1143/JJAP.17.811},
year = {1978},
publisher = {},
volume = {17},
number = {5},
pages = {811},
author = {M. Toshimitsu and H. Hideyo},
title = {Traffic Current Fluctuation and the Burgers Equation},
journal = {Japanese Journal of Applied Physics},
abstract = {The car concentration on an expressway must obey the Burgers equation if the concentration is linearly related to the drift speed. The power spectral density of the random Burgers flow was numerically evaluated based on this model and compared with the observed spectrum of the car flow. They are in approximate agreement.}
}

@article{article2,
author = {F. Dumortier and T.J. Kaper},
year = {2014},
month = {06},
pages = {},
title = {Wave speeds for the FKPP equation with enhancements of the reaction function},
volume = {66},
journal = {Zeitschrift für angewandte Mathematik und Physik},
doi = {10.1007/s00033-014-0422-9}
}

@article{PhysRevE.75.051106,
  title = {Speed of pulled fronts with a cutoff},
  author = {Benguria, R.D. and Depassier, M.C.},
  journal = {Phys. Rev. E},
  volume = {75},
  issue = {5},
  pages = {051106},
  numpages = {5},
  year = {2007},
  publisher = {American Physical Society},
  doi = {10.1103/PhysRevE.75.051106},
  url = {https://link.aps.org/doi/10.1103/PhysRevE.75.051106}
}

@article{PhysRevE.76.051101,
  title = {Effect of a cutoff on pushed and bistable fronts of the reaction-diffusion equation},
  author = {Benguria, R.D. and Depassier, M.C. and Haikala, V.},
  journal = {Phys. Rev. E},
  volume = {76},
  issue = {5},
  pages = {051101},
  numpages = {3},
  year = {2007},
  publisher = {American Physical Society},
  doi = {10.1103/PhysRevE.76.051101},
  url = {https://link.aps.org/doi/10.1103/PhysRevE.76.051101}
}

@article{article3,
author = {Benguria, R. and Depassier, M.C. and Loss, M.},
year = {2012},
month = {12},
pages = {},
title = {Rigorous results for the minimal speed of Kolmogorov–Petrovskii– Piscounov monotonic fronts with a cutoff},
volume = {53},
journal = {Journal of Mathematical Physics},
doi = {10.1063/1.4770248}
}

@article{article4,
author = {Depassier, M.C. and Benguria, R.},
year = {2013},
month = {12},
pages = {},
title = {Shift in the speed of reaction-diffusion equation with a cut-off: Pushed and bistable fronts},
volume = {280},
journal = {Physica D: Nonlinear Phenomena},
doi = {10.1016/j.physd.2014.04.010}
}

@article{gfkpp,
    author = {Koll\'{a}r, R. and Novak, S.},
    title = {Existence of Traveling Waves for the Generalized F–KPP Equation},
    journal = {Bull Math Biol},
    volume = {79},
    year = {2017},
    doi = {10.1007/s11538-016-0244-3},
    url = {https://doi.org/10.1007/s11538-016-0244-3}
}

@book{ablowitz1987topics,
  title={Topics in soliton theory and exactly solvable nonlinear equations: proceedings of the conference on nonlinear evolution equations, solitons and the inverse scattering transform},
  author={Ablowitz, M.J. and Fuchssteiner, B. and Kruskal, M.},
  year={1987},
  publisher={World Scientific}
}

@book{alma998723063502466,
author = {Sachdev, P.L.},
address = {Boca Raton, Fla. ;},
booktitle = {Self-similarity and beyond : exact solutions of nonlinear problems},
isbn = {1584882115},
keywords = {Differential equations Nonlinear -- Numerical solutions},
language = {eng},
lccn = {lc 00034041},
publisher = {Chapman and Hall/CRC},
series = {Chapman and Hall/CRC monographs and surveys in pure and applied mathematics ; 113},
title = {Self-similarity and beyond : exact solutions of nonlinear problems },
year = {2000},
}

@book{10.1201/9781420034967,
  author = {Petrovskii, S. and Li, B.},
  title = {Exactly solvable models of biological invasion},
    publisher = {Crc press},
  year = {2005},
  doi = {10.1201/9781420034967}
}

@article{PhysRevE.72.056113,
  title = {Variational principles and the shift in the front speed due to a cutoff},
  author = {M\'endez, V. and Campos, D. and Zemskov, E.P.},
  journal = {Phys. Rev. E},
  volume = {72},
  issue = {5},
  pages = {056113},
  numpages = {7},
  year = {2005},
  month = {11},
  publisher = {American Physical Society},
  doi = {10.1103/PhysRevE.72.056113},
  url = {https://link.aps.org/doi/10.1103/PhysRevE.72.056113}
}

@InCollection{zbMATH01531778,
 Author = {Sotomayor, J. and Teixeira, M.A.},
 Title = {Regularization of discontinuous vector fields},
 BookTitle = {International conference on differential equations. Papers from the conference, EQUADIFF 95, Lisboa, Portugal, July 24--29, 1995},
 ISBN = {981-02-3421-X},
 Pages = {207--223},
 Year = {1998},
 Publisher = {Singapore: World Scientific},
 Language = {English},
 Keywords = {37C10,34D30,37D15},
 zbMATH = {1531778},
 Zbl = {0957.37015}
}

@Article{zbMATH06560497,
 Author = {Bonet-Rev{\'e}s, C. and Seara, T.M.},
 Title = {Regularization of sliding global bifurcations derived from the local fold singularity of {Filippov} systems},
 FJournal = {Discrete and Continuous Dynamical Systems},
 Journal = {Discrete Contin. Dyn. Syst.},
 ISSN = {1078-0947},
 Volume = {36},
 Number = {7},
 Pages = {3545--3601},
 Year = {2016},
 Language = {English},
 DOI = {10.3934/dcds.2016.36.3545},
 Keywords = {34A36,34E15,34E10,37G15,34C23},
 zbMATH = {6560497},
 Zbl = {1339.34024}
}

@article{burgeparticle,
    author = {Sznitman, A.S.},
    title = {A propagation of chaos result for Burgers' equation},
    journal = {Probab. Th. Rel. Fields },
    year = {1986},
    url = {https://doi.org/10.1007/BF00699042},
    Volume = {71},
    Pages = {581--613},
}

\end{document}